\theoremstyle{plain}
\newtheorem{thrm}{Theorem}[section]
\theoremstyle{remark}
\newtheorem{remark}[thrm]{Remark}
\newtheorem{example}[thrm]{Example}
\theoremstyle{plain}
\newtheorem{theorem}[thrm]{Theorem}
\newtheorem{lemma}[thrm]{Lemma}
\newtheorem{proposition}[thrm]{Proposition}
\newtheorem{definition}[thrm]{Definition}
\numberwithin{equation}{section}
\newcommand{\leaderfill}{\leaders\hbox to 1em{\hss.\hss}\hfill}
\newcommand{\RR}{\mathbb{R}}
\newcommand{\R}{\mathbb{R}}
\newcommand{\CC}{\mathbb{C}}
\newcommand{\NN}{\mathbb{N}}
\newcommand{\PP}{\mathbb{P}}
\renewcommand{\P}{{\mathbb P}}
\newcommand{\E}{\mathbb{E}}
\newcommand{\e}{\varepsilon}
\newcommand{\one}{\mathbf{1}}
\newcommand{\calL}{{\mathscr L}}
\newcommand{\OO}{\Omega}
\newcommand{\F}{{\mathscr F}}
\newcommand{\g}{\gamma}
\newcommand{\lb}{\langle}
\newcommand{\rb}{\rangle}
\newcommand{\umd}{\textsc{umd}}
\newcommand{\nn}{|\!|\!|}
\newcommand{\AT}{AT}
\begin{document}

\title[Stochastic evolution equations with adapted drift]{A new approach to stochastic evolution equations with adapted drift}

\keywords{stochastic partial differential equation, parabolic stochastic evolution equation, adapted drift, non-autonomous equations, type 2, UMD, stochastic convolution, pathwise mild solution, forward integral, space-time regularity}

\subjclass[2010]{60H15, 35R60, 47D06}

\author{Matthijs Pronk and Mark Veraar}
\address{Delft Institute of Applied Mathematics\\
Delft University of Technology \\ 2600 GA Delft\\The
Netherlands} \email{Matthijs.Pronk@tudelft.nl, matthijs.pronk@inphykem.com}
\address{Delft Institute of Applied Mathematics\\
Delft University of Technology \\ 2600 GA Delft\\The
Netherlands} \email{M.C.Veraar@tudelft.nl}

\begin{abstract}
In this paper we develop a new approach to stochastic evolution equations with an unbounded drift $A$ which is dependent on time and the underlying probability space in an adapted way. It is well-known that the semigroup approach to equations with random drift leads to adaptedness problems for the stochastic convolution term. In this paper we give a new representation formula for the stochastic convolution which avoids integration of nonadapted processes. Here we mainly consider the parabolic setting. We establish connections with other solution concepts such as weak solutions. The usual parabolic regularity properties are derived and we show that the new approach can be applied in the study of semilinear problems with random drift. At the end of the paper the results are illustrated with two examples of stochastic heat equations with random drift.
\end{abstract}

\thanks{The first named author is supported by VICI subsidy 639.033.604
of the Netherlands Organisation for Scientific Research (NWO)}

\maketitle

\section{Introduction}

Let $E_0$ be a Banach space and let $H$ be a separable Hilbert space. Let $(\OO, \F, \PP)$ be a complete probability space with a filtration $(\F_t)_{t\in[0,T]}$. We study the following stochastic evolution equation on $E_0$.
\begin{align}\label{problem:SEE-ReprForm}
\left\{
  \begin{array}{ll}
    dU(t) &= (A(t)U(t) + F(t,U(t)))\;dt + B(t,U(t))\;dW(t), \\
    U(0) &= u_0.
  \end{array}
\right.
\end{align}
Here $(A(t,\omega))_{t\in [0,T], \omega\in\OO}$ is a measurable and adapted family of unbounded operators on $E_0$. Moreover, $F$ and $B$ are semilinear nonlinearities and $W$ is a cylindrical Brownian motion.

The integrated form of \eqref{problem:SEE-ReprForm} often leads to problems as in general $A(t) U(t)$ is not well-defined or not integrable with respect to time. In the semigroup approach to \eqref{problem:SEE-ReprForm} this difficulty does not occur. We refer to the monograph \cite{DPZ} and references therein for details on the semigroup approach to \eqref{problem:SEE-ReprForm} in the Hilbert space setting. Extensions to the class of Banach spaces with martingale type $2$ can be found in \cite{Brz97} in the case $A$ is not depending on time. An extension to the nonautonomous setting (i.e. $A$ depends on time) can be found in \cite{Veraar-SEE}.
In the time-dependent setting the mild formulation of \eqref{problem:SEE-ReprForm} is usually written in the following form:
\begin{equation}\label{eq:mildintro}
U(t) = S(t,0) u_0 + \int_0^t S(t,s) F(s,U(s))\, ds + \underbrace{\int_0^t S(t,s) B(s,U(s))\, d W(s)}_{\text{well-defined?}}
\end{equation}
Here, $(S(t,s,\omega))_{0\leq s\leq t\leq T, \omega\in\OO}$ is the evolution system generated by $(A(t,\omega))_{t\in [0,T]}$.
In this case, there is an obstruction in the mild formulation of a solution. The problem is that $\omega\mapsto S(t,s,\omega)$ does not satisfy the right adaptedness properties. In general $\omega\mapsto S(t,s,\omega)$ is only $\F_t$-measurable and not $\F_s$-measurable (see Example \ref{ex:noadapted}). Therefore, the stochastic integral in \eqref{eq:mildintro} cannot be defined in the sense of It\^o.
Equations with random generators arise naturally in the case $A$ depends on a stochastic process, e.g.\ in filtering theory (see \cite{Xiong} and references therein). A random drift term of the form $A(t,\omega) = A_0 + A_1(t,\omega)$, where $A_0$ is a fixed differential operator and $A_1(t,\omega)$ a lower order perturbation of $A_0$, can be treated by a perturbation argument. This case is standard and easy to deal with and we will not consider it here. Our model case is the situation, where the highest order coefficients are also dependent on $(t,\omega)$.

There are several known approaches to \eqref{problem:SEE-ReprForm} which allow a random drift term. In the method of monotone operators (see \cite{KR79}, \cite{Par2}, \cite{Prevot-Roeckner}, \cite{Rozov}) the problem \eqref{problem:SEE-ReprForm} is formulated on a Hilbert space and one can use Galerkin approximation well-posedness questions to reduce the problem to the finite-dimensional setting. In this way no additional difficulty arises when $A$ is dependent on $\Omega$ and time. Also in the $L^p$-approach of Krylov \cite{Kry} one can allow the coefficient of a second order operator $A$ on $\R^d$ to be dependent on $\Omega$ and time in a measurable way. The above mentioned approaches do not use the mild formulation \eqref{eq:mildintro}.

Mild formulations can be useful in many type of fixed point arguments. They can also used to study long time behavior (invariant measures) and time regularity. There have been several attempts to extend the mild approach to \eqref{problem:SEE-ReprForm} to the $\omega$-dependent setting. A possible method for \eqref{problem:SEE-ReprForm} using mild formulations is to use stochastic integration for nonadapted integrands and Malliavin calculus. This has been studied in \cite{AlLeNu, AlNuVi, NualartLeon, LeNu-AnticipIntegrEqs, NuaVie}. This approach is based on Skorohod integration techniques and it requires certain Malliavin differentiability of the operators $A(t)$ or $S(t,s)$.
Another basic example where non-adapted integrands naturally occur is when the initial value of an SDE or SPDE depends on the full paths of the underlying stochastic process (see \cite{Buckdahn91, MilletNualartSanz, Olivera}).
Finally, we mention that in \cite{NVW11eq} a maximal regularity approach to \eqref{problem:SEE-ReprForm} with random $A$ has been developed.

In this paper we will develop a new method for the stochastic evolution equation \eqref{problem:SEE-ReprForm} with random $A$. It is based on a new representation formula for stochastic convolutions. In order to explain this representation formula, consider
\begin{equation}\label{eq:SEEsimpleintro}
\left\{
    \begin{array}{ll}
        dU(t) &= A(t)U(t)\;dt + G\;dW(t), \\
        U(0) &= 0,
    \end{array}
\right.
\end{equation}
where $G$  is an adapted and measurable process and $A$ is as before.
Our new representation formula for the solution to \eqref{eq:SEEsimpleintro} is:
\begin{align}\label{eq:reprformintro}
U(t) = - \int_0^t S(t,s) A(s) I(\one_{(s,t)}G) \, ds + S(t,0) I(\one_{(0,t)} G),
\end{align}
where $I(\one_{(s,t)}G) = \int_s^t G \, d W$. This will be called the {\em pathwise mild solution} to \eqref{eq:SEEsimpleintro}. The pathwise mild solution \eqref{eq:reprformintro} can basically be obtained by using integration by parts formula for the stochastic convolution. The advantage of the formulation is that it does not require stochastic integration of nonadapted integrands. A difficulty in \eqref{eq:reprformintro} is that the norm of the operator-valued kernel $S(t,s) A(s)$ is usually of order $(t-s)^{-1}$. Fortunately, the Bochner integral in \eqref{eq:reprformintro} can still be shown to be convergent as the paths of the process $t\mapsto I(\one_{(0,t)}G)$ have additional H\"older or Sobolev regularity.
The pathwise mild solution will be shown to be equivalent to weak, variational and forward mild solutions (see Section \ref{sec:repr}).

In order to have evolution families with sufficient regularity properties, we will restrict ourselves to the parabolic setting. We will assume that the operators $(A(t))_{t\in [0,T]}$ satisfy the so-called (AT)-conditions which were introduced by Acquistapace and Terreni. This is a combination of a uniform sectoriality condition and a H\"older condition on the resolvents. We will allow $\omega$-dependent H\"older constants in the latter, which is important in view of applications. However, we would like to note that the pathwise mild solution \eqref{eq:reprformintro} can also be used in other parabolic situations where $\|S(t,s,\omega) A(s,\omega)\|\leq C(\omega) (t-s)^{-1}$, or even in a general setting if we assume $G$ is regular in space (i.e.\ takes values in a suitable subspace of the domains of $A$).
The pathwise mild solution \eqref{eq:reprformintro} is a pathwise integral representation of the solution and we expect that this has potential applications in the theory of stochastic evolution equations even in the case of non-random $A$. For instance, as there is no stochastic convolution, certain behavior can be studied path-wise. This could play an important role in numerical simulations, in studying long-term behavior and it could be combined with methods from rough path theory. Moreover, when replacing $W$ by a general Gaussian process $M$ (or more general stochastic process), it is enough to establish an integration theory for $I(\one_{(s,t)}G) = \int_s^t G \, d M$, and no stochastic convolutions are needed in \eqref{eq:reprformintro}.

This paper is organized as follows. In Section \ref{eq:SEF} we will discuss the (AT)-conditions, and extend some of their results to the $\omega$-dependent setting. In Section \ref{sec:pathwisereg} we present a new pathwise regularity result, which will allow to obtain the usual parabolic regularity of the solution to \eqref{eq:SEEsimpleintro}. In Section \ref{sec:repr} we discuss the pathwise mild solution \eqref{eq:reprformintro} and its relations to other solution concepts. In Section \ref{sec:semil} we discuss a general semilinear problem and prove well-posedness with a fixed point argument. For this we first obtain  well-posedness under the assumption that the constants in the (AT)-conditions are $\omega$-independent H\"older conditions. After that we localize the H\"older condition and extend the result to the general case. Finally, we illustrate our results with examples in Section \ref{sec:ex}.

\medskip

{\em Acknowledgments.} The authors thank the anonymous referee for careful reading and helpful suggestions.

\section{Stochastic evolutions families\label{eq:SEF}}

Let $E_0$ be a Banach space. In this section we will be concerned with
generation properties of families of unbounded operators.
For $t\in [0,T]$ and $\omega\in \OO$ fixed, we consider a closed and densely defined operator
\[A(t,\omega): E_0 \supset D(A(t,\omega)) \to E_0.\]
For convenience, we sometimes write $A(t)$ and $D(A(t))$ instead of $A(t,\omega)$ and $D(A(t,\omega))$, respectively.

We will only consider the parabolic setting (i.e.\ the case the operators $-A(t,\omega)$ are sectorial of angle less than $\pi/2$ with uniform constants in $(t,\omega)$. This is well-documented in the literature (see \cite{Ama, Lunardi, Pazy, Tanabe, Ta2}).

\subsection{Generation theorem}\label{subsec:gen-thm}
In this subsection we will consider the conditions introduced by Acquistapace and Terreni \cite{AT2} (see also \cite{Acquist:evoloper, AT1, Ama, Schnaubelt, Ta2, Yag90, Yag91} and references therein).
An important difficulty in our situation is that $A(t,\omega)$ depends on the additional parameter $\omega\in \OO$.

For $\vartheta \in (\pi/2,\pi)$ we set
\[\Sigma_{\vartheta} = \{\lambda\in \CC:\ |\arg \lambda| < \vartheta\}.\]

On $A$ we will assume a sectoriality condition and a H\"older continuity assumption:

\let\ALTERWERTA\theenumi
\let\ALTERWERTB\labelenumi
\def\theenumi{(AT1)}
\def\labelenumi{(AT1)}
\begin{enumerate}
  \item\label{asmp:AT1} There exists a $\vartheta\in (\pi/2,\pi)$ and $M>0$ such that for every $(t,\omega)\in [0,T]\times\Omega$, one has $\Sigma_\vartheta \cup \{0\}\subset \rho(A(t,\omega))$ and
  \[\|R(\lambda, A(t,\omega))\|_{\calL(E_0)}\leq \frac{M}{|\lambda|+1}, \ \ \lambda \in \Sigma_\vartheta \cup \{0\}.\]
\end{enumerate}
\let\theenumi\ALTERWERTA
\let\labelenumi\ALTERWERTB

\let\ALTERWERTA\theenumi
\let\ALTERWERTB\labelenumi
\def\theenumi{(AT2)}
\def\labelenumi{(AT2)}
\begin{enumerate}
  \item\label{asmp:AT2} There exist $0<\nu,\mu\leq 1$ with $\mu+\nu>1$ such that for every $\omega\in \OO$, there exists a constant $L(\omega) \geq 0$ such that for all $s,t\in [0,T]$ and $\lambda \in \Sigma_{\vartheta}$,
    \begin{align*}
    |\lambda|^{\nu}\|A(t,\omega)&R(\lambda,A(t,\omega))(A(t,\omega)^{-1}-A(s,\omega)^{-1})\|_{\calL(E_0)} \leq L(\omega)|t-s|^{\mu}.
    \end{align*}
\end{enumerate}
\let\theenumi\ALTERWERTA
\let\labelenumi\ALTERWERTB

We would like to point out that it will be important that in the H\"older continuity assumption the H\"older constant is allowed to depend on $\omega$.
Whenever \ref{asmp:AT1} and \ref{asmp:AT2} hold, it is said that (\AT) holds. The abbreviation (\AT) stands for Acquistapace and Terreni.

In the sequel we will not write the dependence on $\omega\in \OO$ explicitly whenever there is no danger of confusion.

\begin{example}\label{ex:Tanabe}
Assume $E_1 = D(A(t,\omega))$ is constant with uniform estimates in $t\in [0,T]$ and $\omega\in \OO$. Assume \ref{asmp:AT1} holds. If there is a $\mu\in (0,1]$ and a mapping $C:\OO\to \R_+$ such that
\[\|A(t) - A(s)\|_{\calL(E_1, E_0)} \leq C|t-s|^{\mu}, \ \ s,t\in [0,T], \]
then \ref{asmp:AT2} holds with $\nu = 1$ and $L = M C$ up to a constant multiplicative factor.
The above type of condition is sometimes called the Kato--Tanabe condition (see \cite{Pazy, Tanabe}).
\end{example}

Let $\Delta := \{(s,t) \in [0,T]^2:\ s\leq t\}$. The following result can be derived by applying \cite[Theorem 2.3]{Acquist:evoloper} pointwise in $\OO$.
\begin{theorem}\label{thrm:exist-evol}
Assume (\AT). There exists a unique map $S:\Delta\times\OO \to \calL(E_0)$ such that
\begin{enumerate}
  \item For all $t\in [0,T]$, $S(t,t) = I$.
  \item For $r\leq s\leq t$, $S(t,s)S(s,r) = S(t,r)$.
  \item For every $\omega\in \OO$, the map $S(\cdot,\omega)$ is strongly continuous.
  \item There exists a mapping $C:\OO\to \R_+$ such that for all $s\leq t$, one has $\|S(t,s)\|\leq C$.
  \item For every $s<t$, one has $\frac{d}{dt} S(t,s) = A(t)S(t,s)$ pointwise in $\OO$, and there exists a mapping $C:\OO\to \R_+$ such that
      \[\|A(t)S(t,s)\|_{\calL(E_0)} \leq C(t-s)^{-1}.\]
\end{enumerate}
\end{theorem}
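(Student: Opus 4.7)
The plan is to reduce the theorem to the deterministic Acquistapace--Terreni generation theorem \cite[Theorem 2.3]{Acquist:evoloper} applied pointwise in $\omega$. For each fixed $\omega\in\Omega$, the family $(A(t,\omega))_{t\in[0,T]}$ satisfies the classical (AT) conditions: the sectoriality data $(M,\vartheta)$ in \ref{asmp:AT1} are independent of $\omega$, and the H\"older assumption \ref{asmp:AT2} reduces to the usual one with a fixed (but $\omega$-dependent) constant $L(\omega)$. The deterministic theorem then produces, for each $\omega$, a unique evolution system $S(\cdot,\cdot,\omega):\Delta\to\calL(E_0)$ satisfying the semigroup identity, strong continuity, uniform boundedness, differentiability $\partial_t S(t,s)=A(t)S(t,s)$, and the parabolic smoothing estimate.

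First, items (1) and (2) --- the identity $S(t,t,\omega)=I$ and the Chapman--Kolmogorov relation $S(t,s,\omega)S(s,r,\omega)=S(t,r,\omega)$ --- hold pointwise in $\omega$ by the corresponding deterministic assertion, and the same is true of (3), the strong continuity of $(s,t)\mapsto S(t,s,\omega)$ on $\Delta$ for each fixed $\omega$. Uniqueness of the global map $S:\Delta\times\Omega\to\calL(E_0)$ follows likewise: any competing map would, for each fixed $\omega$, yield a competing deterministic evolution family, contradicting the pointwise uniqueness in \cite[Theorem 2.3]{Acquist:evoloper}.

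For (4) and (5), I would inspect how the constants produced by the deterministic theorem depend on the input data. They are functions of $M,\vartheta,\mu,\nu$ and the H\"older constant $L$. Since in our situation only $L$ varies with $\omega$, the bounds take the form
\[
\|S(t,s,\omega)\|_{\calL(E_0)}\leq C(M,\vartheta,\mu,\nu,L(\omega)),\qquad \|A(t,\omega)S(t,s,\omega)\|_{\calL(E_0)}\leq C(M,\vartheta,\mu,\nu,L(\omega))\,(t-s)^{-1},
\]
and these define the required mappings $C:\Omega\to\R_+$ in items (4) and (5). The differentiability in (5) follows by applying $d/dt$ to $S(t,s,\omega)$ for each fixed $\omega$, as furnished by the pointwise application.

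The main (and essentially only) obstacle is bookkeeping: one must verify that the constants appearing in \cite[Theorem 2.3]{Acquist:evoloper} depend on $L$ only through its numerical value, so that no hidden $\omega$-dependence arises via $M,\vartheta,\mu,\nu$. Once this is checked, the theorem is simply the pointwise restatement of the deterministic result. Note that no joint regularity of $\omega\mapsto S(t,s,\omega)$ is claimed here; measurability and adaptedness of the evolution family, which are essential for later stochastic arguments, are presumably addressed in the subsequent subsections.
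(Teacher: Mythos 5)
Your argument is exactly the paper's: Theorem \ref{thrm:exist-evol} is obtained by applying \cite[Theorem 2.3]{Acquist:evoloper} pointwise in $\omega$, with the only $\omega$-dependence entering through the H\"older constant $L(\omega)$, which is why the bounds in (4) and (5) are mappings $C:\OO\to\R_+$ rather than uniform constants. Your closing remark is also accurate — measurability and adaptedness are deferred to Proposition \ref{prop:evolsys-adapted}.
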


In the above situation we say that $(A(t))_{t\in [0,T]}$ {\em generates the evolution system/family} $(S(t,s))_{0\leq s\leq t\leq T}$.

\subsection{Measurability}
Throughout this subsection we assume that (\AT) holds.

As the domains $D(A(t,\omega))$ also vary in $(t,\omega)$, the most natural way to formulate the adaptedness assumption is by a condition on the resolvent as follows:

\let\ALTERWERTA\theenumi
\let\ALTERWERTB\labelenumi
\def\theenumi{(H1)}
\def\labelenumi{(H1)}
\begin{enumerate}
  \item\label{asmp:A-mble-adptd} For some $\lambda\in\Sigma_\vartheta\cup \{0\}$, $R(\lambda, A(\cdot)): [0,T] \times \OO \to \calL(E_0)$ is strongly measurable and adapted.
\end{enumerate}
\let\theenumi\ALTERWERTA
\let\labelenumi\ALTERWERTB

Here we consider measurability and adaptedness in the uniform operator topology. Hypothesis \ref{asmp:A-mble-adptd} implies that for all $\lambda\in\Sigma_\vartheta\cup \{0\}$, $R(\lambda, A(\cdot))$ is strongly measurable and adapted. This follows from the fact that the resolvent can be expressed as a uniformly convergent power series (see \cite[Proposition IV.1.3]{EngelNagel}).

\begin{example}\label{example:ConstDomH1Verify}
Assume the conditions of Example \ref{ex:Tanabe} hold. If $A:[0,T]\times\OO\to \calL(E_1, E_0)$ is strongly measurable and adapted, then \ref{asmp:A-mble-adptd} holds. Indeed, fix $\omega_0\in \OO$. Since $(t,\omega)\mapsto A(t,\omega) A(0,\omega_0)^{-1}$ is strongly measurable and adapted and taking inverses is continuous on the open set of invertible operators, it follows that $(t,\omega)\mapsto A(0,\omega_0) A(t,\omega)^{-1}$ is strongly measurable and adapted. This clearly yields \ref{asmp:A-mble-adptd}.
\end{example}

Let $r>0$ and $\eta \in (\pi/2, \vartheta)$, and consider the counterclockwise oriented curve
\[\gamma_{r,\eta} := \{\lambda\in \CC:\ |\arg\lambda| = \eta,\ |\lambda| \geq r\} \cup \{\lambda \in \CC:\ |\lambda| = r, -\eta \leq \arg\lambda\leq \eta\}.\]
For $s\in [0,T]$, consider the analytic semigroup $(e^{tA(s)})_{t\geq 0}$ defined by
\begin{align*}
e^{tA(s)}x = \left\{
    \begin{array}{ll}
        \frac{1}{2\pi i} \int_{\gamma_{r,\eta}} e^{t\lambda} R(\lambda, A(s))x\;d\lambda, & t>0, \\
        x, & t=0.
    \end{array}
    \right.
\end{align*}

\begin{proposition}\label{prop:evolsys-adapted}
The evolution system $S: \Delta \times \OO \to \calL(E_0)$ is strongly measurable in the uniform operator topology. Moreover, for each $t \geq s$, $\omega\mapsto S(t,s,\omega)\in \calL(E_0)$ is strongly $\mathscr{F}_t$-measurable in the uniform operator topology.
\end{proposition}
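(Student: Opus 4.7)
The plan is to propagate measurability and adaptedness from the resolvents $R(\lambda,A(\cdot))$ through the standard Acquistapace--Terreni construction of $S(t,s)$. By \ref{asmp:A-mble-adptd} and the remarks following it, for every $\lambda \in \Sigma_\vartheta \cup \{0\}$ the map $(t,\omega)\mapsto R(\lambda,A(t,\omega))$ is strongly measurable in the uniform operator topology and $\F_t$-adapted.

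As a first step, I would prove that for every $\tau\geq 0$ the map $(s,\omega)\mapsto e^{\tau A(s,\omega)}$ is jointly strongly measurable and, for fixed $s$, $\F_s$-measurable (hence $\F_t$-measurable for every $t\geq s$). This is done directly from the contour integral displayed above the proposition: the integrand $e^{\tau\lambda}R(\lambda,A(s,\omega))$ is norm-continuous in $\lambda$ on $\gamma_{r,\eta}$ and decays exponentially on the rays, so the integral is the operator-norm limit of its restrictions to $\{|\lambda|\leq N\}$, each of which is in turn a norm-limit of Riemann sums in $\lambda$. Every such Riemann sum is a finite linear combination of operators $R(\lambda_j,A(s,\omega))$ and therefore inherits strong measurability and $\F_s$-adaptedness, and these properties are preserved in the operator-norm limit. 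The same argument, regarding $\tau$ as an additional continuous parameter, gives joint strong measurability in $(\tau,s,\omega)$.

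Next, I would invoke the Acquistapace--Terreni representation used in the proof of Theorem \ref{thrm:exist-evol} (cf.\ \cite[Theorem 2.3]{Acquist:evoloper}): for $s<t$,
\[
S(t,s) = e^{(t-s)A(s)} + \int_s^t Z(t,\sigma)\,e^{(\sigma-s)A(s)}\,d\sigma,
\]
where $Z(t,\sigma)$ is obtained as a norm-convergent Volterra series in the kernel $K(t,\sigma)=(A(t)-A(\sigma))\,e^{(t-\sigma)A(\sigma)}$. Using the resolvent difference $A(t)^{-1}-A(\sigma)^{-1}$ appearing in \ref{asmp:AT2}, together with another contour integral, $K(t,\sigma)$ itself can be written in terms of $R(\lambda,A(t))$ and $R(\lambda,A(\sigma))$; so by the previous step $(t,\sigma,\omega)\mapsto K(t,\sigma,\omega)$ is strongly measurable in the uniform operator topology, and for each fixed $(t,\sigma)$ with $\sigma\leq t$ it is $\F_t$-measurable. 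The iterated Bochner integrals defining the Volterra series preserve these properties, the sum converges in operator norm by the uniform bounds provided by (\AT), and the outer $\sigma$-integral is again a norm-convergent Bochner integral. This yields the required measurability of $S$ on $\Delta\times\OO$ as well as the $\F_t$-measurability for each fixed $(s,t)\in\Delta$.

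The main obstacle I expect is bookkeeping rather than anything conceptual: carefully justifying measurability through the unbounded contour integrals and through the iterated Bochner integrals of the Volterra series, and making sure that the resulting null sets can be absorbed without destroying the $\F_t$-adaptedness. The (AT) hypotheses supply uniform operator-norm bounds on every kernel, so all limits are taken in operator norm and the standard stability of strong measurability and adaptedness under such limits applies.
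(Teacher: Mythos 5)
Your proposal is correct and follows essentially the same route as the paper: measurability is propagated from the resolvents in \ref{asmp:A-mble-adptd} through the contour-integral representation of the semigroups $e^{\tau A(s)}$ and then through the Volterra-type series defining $S(t,s)$ in \cite{Acquist:evoloper}, with all limits taken in operator norm via Riemann sums of norm-continuous integrands. The only (harmless) divergence is in the joint measurability on $\Delta\times\OO$: the paper obtains it a posteriori by approximating $S$ with piecewise-constant processes on a triangular grid and invoking strong continuity of $S$, whereas you track joint measurability directly through the construction; both work.
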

In Example \ref{ex:noadapted} we will show that the above measurability result cannot be improved in general.
\begin{proof}
Fix $0\leq s<t\leq T$. The evolution system $S(t,s)$ is given in \cite{Acquist:evoloper}, as follows. Let $Q(t,s)$ be given by
\begin{align}
Q(t,s) = A(t)^2 e^{(t-s)A(t)} (A(t)^{-1} - A(s)^{-1}).
\end{align}
Define inductively $Q_n(t,s)$ by
\[Q_1(t,s) = Q(t,s),\quad Q_n(t,s) = \int_s^t Q_{n-1}(t,r)Q(r,s)\;dr.\]
Then the evolution system $S(t,s)$ is given by
\begin{align}
S(t,s) = e^{(t-s)A(s)} + \int_s^t Z(r,s)\;dr,
\end{align}
where
\begin{align}
\begin{split}
    Z(t,s) :&= A(t)e^{(t-s)A(t)} - A(s)e^{(t-s)A(s)}
    \\ & \quad + \sum_{n=1}^\infty \int_s^t Q_n(t,r)\Big(A(r)e^{(r-s)A(r)} - A(s)e^{(r-s)A(s)}\Big)\;dr \\
    & \quad + \sum_{n=1}^\infty \int_s^t (Q_n(t,r) - Q_n(t,s))A(s)e^{(r-s)A(s)}\;dr \\ & \quad + \sum_{n=1}^\infty Q_n(t,s)(e^{(t-s)A(s)}-1).
\end{split}
\end{align}
The above series converges in $\calL(E_0)$, see \cite[Lemma 2.2 (1)]{Acquist:evoloper}.

\textit{Step 1: $S(t,s)$ is $\mathscr{F}_t$-measurable.}
Note that
\[A(t)^ne^{(t-s)A(t)} = \frac{1}{2\pi i} \int_{\gamma_{r,\eta}} e^{(t-s)\lambda} \lambda^n R(\lambda, A(t))\;d\lambda,\qquad n\in \NN.\]
Also, for $n\in \NN$, note that $\lambda \mapsto e^{(t-s)\lambda} \lambda^n R(\lambda, A(t))$ is continuous on $\gamma_{r,\eta}$, and hence Riemann integrable. The random variable $e^{(t-s)\lambda} \lambda^n R(\lambda, A(t))$ is $\mathscr{F}_t$-measurable for every $\lambda \in \gamma_{r,\eta}$, hence every Riemann sum and thus every Riemann integral is $\mathscr{F}_t$-measurable. It follows that $A(t)^ne^{(t-s)A(t)}$ (as it is the limit of Riemann integrals) is $\mathscr{F}_t$-measurable. In particular this holds for $n=2$, and hence $Q(t,s)$ is $\mathscr{F}_t$-measurable as well.
On $(s,t)$, the map $r\mapsto Q_{n-1}(t,r)Q(r,s)$ is continuous, by \cite[Lemma 2.1]{Acquist:evoloper}. Thus, by a similar argument as above, $Q_n(t,s)$ is $\mathscr{F}_t$-measurable, for $n\geq 2$. Also $e^{(t-s)A(s)}$ is $\mathscr{F}_t$-measurable. Hence the random variable
\[\sum_{n=1}^\infty Q_n(t,s)(e^{(t-s)A(s)}-1)\]
is $\mathscr{F}_t$-measurable.

Clearly $r\mapsto A(s)e^{(r-s)A(s)}$ is continuous. By \cite[Lemma 2.1]{Acquist:evoloper}, $r\mapsto Q_n(t,r) - Q_n(t,s)$ is continuous as well. Hence, as before we see that
\[\sum_{n=1}^\infty \int_s^t (Q_n(t,r) - Q_n(t,s))A(s)e^{(r-s)A(s)}\;dr \]
is $\mathscr{F}_t$-measurable.

The map $g: r\mapsto A(r)e^{(r-s)A(r)} - A(s)e^{(r-s)A(s)}$ for $r\in (s,t)$ is continuous. Indeed,
\begin{align*}
\|g(q) - g(r)\|_{\calL(E_0)} &\leq \|A(q)e^{(q-s)A(q)} - A(r)e^{(q-r)A(r)}\|_{\calL(E_0)} \\ & \quad + \|A(r)(e^{(q-s)A(r)} - e^{(r-s)A(r)})\|_{\calL(E_0)} \\ & \quad + \|A(s)(e^{(r-s)A(s)} - e^{(q-s)A(s)})\|_{\calL(E_0)}.
\end{align*}
Now \cite[Lemma 1.10(i)]{AT3} yields the required continuity of $g$ and its integral will be $\F_t$-measurable again.
Combining all terms we deduce that $Z(t,s)$ is $\mathscr{F}_t$-measurable.
By \cite[Lemma 2.2(ii)]{Acquist:evoloper} the map $r\mapsto Z(r,s)$ is continuous on $(s,t)$ and therefore, we can now deduce that $S(t,s)$ is $\mathscr{F}_t$-measurable.

\textit{Step 2: measurability of the process $S$.}
For $n\in \NN$ and $k=0,1,\ldots, n-1$, consider the triangle
\[D_{k,n} = \{(s,t) \in [0,T]^2:\ \tfrac{k}{n} \leq t \leq \tfrac{k+1}{n},\ \tfrac{k}{n} \leq s\leq t\}.\]
Let $I$ be the identity operator on $E_0$. Then for $0\leq s\leq t\leq T$, define $X_n: \Delta \times \OO \to \calL(E_0)$ by
\[X_n(t,s) := \sum_{k=1}^{n-1} \one_{D_{k,n}}(s,t) I + \sum_{k=0}^{n-2} \sum_{m=k+1}^{n-1} \one_{(\frac{k}{n}, \frac{k+1}{n}] \times (\frac{m}{n}, \frac{m+1}{n}]}(s,t) S\Big(\frac{m}{n}, \frac{k}{n}\Big).\]

Since $S(t,s): \OO \to \calL(E_0)$ is strongly measurable, by Step 1, it follows that $X_n: \Delta \times \OO \to \calL(E_0)$ is strongly measurable. Moreover, by strong continuity of $S$, pointwise on $\Delta\times \OO$, one has $X_n \to S$. Hence $S: \Delta \times \OO \to \calL(E_0)$ is strongly measurable.
\end{proof}

\begin{example}
\label{ex:noadapted}
Let $E_0 = \R$ and let $A:[0,T]\times\OO\to \R$ be a measurable and adapted process such that $\sup_{t\in [0,T]}|A(t,\omega)|<\infty$.
Then $A$ generates the evolution system
\[S(t,s,\omega) = \exp\Big(\int_s^t A(r,\omega) \, dr \Big).\]
Obviously $\omega\mapsto S(t,s,\omega)$ is only $\F_t$-measurable in general.
\end{example}

\subsection{Pathwise regularity properties of evolution families}
Throughout this subsection we assume that (\AT) holds.
First we recall some facts from interpolation theory. An overview on the subject can be found in \cite{Ama,Lunardi,Tr1}.

For $\theta \in (0,1)$ and $p\in [1,\infty]$ the real interpolation space
$E_{\theta,p}^t := (E_0,D(A(t)))_{\theta,p}$ is the subspace of all $x\in E_0$ for which
\begin{align}\label{defn:intrpltn-nrm}
\|x\|_{(E_0, D(A(t)))_{\theta,p}} := \Big(\int_0^\infty s^{p(1-\theta)} \|A(t)e^{sA(t)}x\|_{E_0}^p\;\frac{ds}{s}\Big)^{1/p}<\infty,
\end{align}
with the obvious modification if $p=\infty$. Clearly, the space $E_{\theta,p}^t$ and its norm also depends on $\omega\in \Omega$, but this will be omitted from the notation. The space $E_{\theta,p}^t$ with the above norm is a Banach space. For convenience we also let $E_{0,p}^t:=(E_0,D(A(t)))_{0,p} = E_0$ and $E_{1,p}^t:=(E_0, D(A(t)))_{1,p} = D(A(t))$. By applying $A(t)$ finitely many times on both sides we extend the definition of the spaces $E_{\theta,p}^t:=(E_0, D(A(t)))_{\theta,p}$ to all $\theta\geq 0$.

For all $\theta\in [0,\alpha)$
\begin{equation}\label{array-cnt-emb1}
\begin{aligned}
E_{\alpha, 1}^t  & \hookrightarrow E_{\alpha,p}^t \hookrightarrow E_{\alpha, \infty}^t \hookrightarrow E_{\theta, 1}^t  \hookrightarrow E_0.
\end{aligned}
\end{equation}
Here, the embedding constants only depend on the constants in \ref{asmp:AT1} and thus are independent of time and $\omega$.

For $\theta\in (0,1)$, let $(-A(t,\omega))^{-\theta}$ be defined by
\begin{align*}
(-A(t))^{-\theta} = \frac{1}{\Gamma(\theta)} \int_0^\infty s^{\theta-1} e^{sA(t)}\;ds,
\end{align*}
and let $(-A(t))^{\theta} = ((-A(t))^{-\theta})^{-1}$ with as domain the range of $(-A(t))^{-\theta}$.
Endowed with the norm $\|x\|_{D((-A(t))^{\theta})} = \|(-A(t))^{\theta}x\|_{E_0}$, the space $D((-A(t))^{\theta})$ becomes a Banach space.

For $\theta\geq 0$, the following continuous embeddings hold:
\begin{align}\label{array-cnt-emb2}
E_{\theta, 1}^t  \hookrightarrow D(-A(t))^{\theta} \hookrightarrow E_{\theta, \infty}^t.
\end{align}
and again the embedding constants only depend on the constants in \ref{asmp:AT1}.

The next result follows from pointwise application of \cite[(2.13), (2.15) and Proposition 2.4]{Schnaubelt}.
Recall that $\mu,\nu \in (0,1]$ are the smoothness constants from \ref{asmp:AT2}.
\begin{lemma}\label{lem:-evolsys-frct-ineq}
There exists a mapping $C:\OO\to \R_+$ such that for all $0\leq s<t\leq T$, $0\leq \alpha<\beta \leq 1$, $\eta\in (0,\mu+\nu-1)$, $\gamma\in [0,\mu)$, $\theta\in [0,1]$ and $\delta,\lambda\in (0,1)$, the following inequalities hold
\begin{align}
\|S(t,s)x\|_{E^t_{\beta,1}} &\leq C\frac{\|x\|_{E^s_{\alpha,\infty}}}{(t-s)^{\beta - \alpha} }, &x&\in E^s_{\alpha,\infty}. \label{thrm-evolsys-frct-ineq1}
\\ \|A(t) S(t,s) x\|_{E^t_{\eta,1}} &\leq C\frac{\|x\|_{E_0}}{(t-s)^{1+\eta}},   &x& \in E_0.\label{thrm-evolsys-frct-ineq2}
\\ \|A(t) S(t,s) x\|_{E^t_{\eta,1}} &\leq C\frac{\|x\|_{E^s_{\delta,\infty}}}{(t-s)^{1+\eta-\delta}},  &x& \in E^s_{\delta,\infty}.\label{thrm-evolsys-frct-ineq2c}
\\ \|S(t,s)(-A(s))^{\gamma} x\|_{E_0} &\leq C\frac{\|x\|_{E_0}}{(t-s)^{\gamma}},  &x& \in D((-A(s))^{\gamma}), \label{thrm-evolsys-frct-ineq2b}
\\ \|(-A(t))^{\theta}S(t,s)(-A(s))^{-\theta}\|_{\calL(E_0)} &\leq C \label{thrm-evolsys-frct-ineq3}
\end{align}
and $\Delta\ni (t,s)\mapsto (-A(t))^{\theta}S(t,s)(-A(s))^{-\theta}$ is strongly continuous.
\end{lemma}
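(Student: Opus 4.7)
The plan is to obtain all of \eqref{thrm-evolsys-frct-ineq1}--\eqref{thrm-evolsys-frct-ineq3} together with the strong continuity claim by applying the corresponding deterministic results from \cite{Schnaubelt} pointwise in $\omega$. For each fixed $\omega\in\OO$, assumption (\AT) says that $(A(t,\omega))_{t\in[0,T]}$ is a bona fide Acquistapace--Terreni family, so each of the Schnaubelt estimates holds with a deterministic constant depending only on $M$, $\vartheta$, $\mu$, $\nu$, $T$ and the H\"older constant $L(\omega)$; packaging all of that $\omega$-dependence into a single mapping $C:\OO\to\R_+$ then produces the form stated in the lemma.

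In detail I would argue as follows. First, \eqref{thrm-evolsys-frct-ineq3} and the strong continuity of $(t,s)\mapsto (-A(t))^{\theta}S(t,s)(-A(s))^{-\theta}$ are precisely the content of \cite[Proposition~2.4]{Schnaubelt}. Second, \eqref{thrm-evolsys-frct-ineq2b} follows by combining \eqref{thrm-evolsys-frct-ineq3} with $\theta=\gamma$ and the elementary bound $\|(-A(s))^{\gamma}S(t,s)\|_{\calL(E_0)}\leq C(t-s)^{-\gamma}$, which in turn comes from differentiating the Laplace representation $(-A(s))^{-\gamma}=\Gamma(\gamma)^{-1}\int_0^{\infty}r^{\gamma-1}e^{rA(s)}\,dr$ and using the analytic semigroup estimate $\|A(s)e^{rA(s)}\|\leq Cr^{-1}$ from (AT1). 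Third, \eqref{thrm-evolsys-frct-ineq2} and \eqref{thrm-evolsys-frct-ineq2c} follow from \cite[(2.15)]{Schnaubelt} after rewriting the $E^t_{\eta,1}$-norm via the defining integral \eqref{defn:intrpltn-nrm} and using the identity $A(t)^{2}e^{rA(t)}S(t,s)=A(t)e^{rA(t)}\cdot A(t)S(t,s)$ with the standard smoothing of the semigroup generated by $A(t)$. Fourth, \eqref{thrm-evolsys-frct-ineq1} is a reformulation of \cite[(2.13)]{Schnaubelt}, obtained by using the embedding $E^s_{\alpha,\infty}\hookrightarrow (E_0,D(A(s)))_{\alpha,\infty}$ in the source and again the integral characterization of $E^t_{\beta,1}$ in the target.

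The only non-routine point is the book-keeping of constants. In Schnaubelt's deterministic arguments every constant is explicit in terms of $M$, $\vartheta$, $\mu$, $\nu$, $T$ (from (AT1) and the purely quantitative data) and $L$ (from the H\"older condition (AT2)). In our setting the former are $\omega$-independent by assumption, while the latter may depend on $\omega$, so the final constant $C$ must be promoted to a mapping $C:\OO\to\R_+$, but it remains uniform in the remaining parameters $s,t,\alpha,\beta,\eta,\gamma,\theta,\delta$ over the stated ranges. No measurability or adaptedness statement is made in the lemma, so a purely pathwise argument suffices and the strong continuity assertion is again nothing more than a pointwise-in-$\omega$ transcription of \cite[Proposition~2.4]{Schnaubelt}. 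I do not expect any genuine obstacle here beyond this careful tracking of dependences.
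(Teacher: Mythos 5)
Your top-level strategy is exactly the paper's: Lemma \ref{lem:-evolsys-frct-ineq} is proved there by nothing more than a pointwise-in-$\omega$ application of \cite[(2.13), (2.15), Proposition 2.4]{Schnaubelt}, with the constant promoted to a mapping $C:\OO\to\R_+$ because only the H\"older constant $L(\omega)$ of \ref{asmp:AT2} is allowed to vary. So the citation-and-bookkeeping part of your argument is fine.

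Two of your detailed derivations, however, would fail as written. First, the target spaces in \eqref{thrm-evolsys-frct-ineq1}--\eqref{thrm-evolsys-frct-ineq2c} carry second index $1$, whereas Schnaubelt's (2.13) and (2.15) land in spaces with second index $\infty$; since $E^t_{\beta,1}\subsetneq E^t_{\beta,\infty}$, you cannot pass from one to the other by ``rewriting the norm via the defining integral.'' Concretely, your identity $A(t)^2e^{rA(t)}S(t,s)=A(t)e^{rA(t)}\cdot A(t)S(t,s)$ combined with $\|A(t)e^{rA(t)}\|\leq Cr^{-1}$ gives $\int_0^1 r^{1-\eta}\cdot r^{-1}\,\frac{dr}{r}=\int_0^1 r^{-1-\eta}\,dr=\infty$. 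The missing idea is reiteration: apply the exponent-$\infty$ estimate for two smoothness parameters straddling $\beta$ (resp.\ $\eta$) and real-interpolate with second index $1$; the paper states this explicitly right after the lemma, and also obtains \eqref{thrm-evolsys-frct-ineq2c} by interpolating \eqref{thrm-evolsys-frct-ineq1} and \eqref{thrm-evolsys-frct-ineq2} rather than directly from (2.15). Second, your derivation of \eqref{thrm-evolsys-frct-ineq2b} treats $S(t,s)$ as if it were the frozen semigroup $e^{(t-s)A(s)}$: the Laplace representation of $(-A(s))^{-\gamma}$ plus $\|A(s)e^{rA(s)}\|\leq Cr^{-1}$ gives $\|e^{(t-s)A(s)}(-A(s))^{\gamma}\|\leq C(t-s)^{-\gamma}$, but for the evolution family the operators $S(t,s)$ and $(-A(s))^{\gamma}$ do not commute and the difference $S(t,s)-e^{(t-s)A(s)}$ must be controlled via \ref{asmp:AT2}; this is precisely why the estimate is restricted to $\gamma<\mu$, a restriction your ``elementary'' argument cannot see. (You also wrote $(-A(s))^{\gamma}S(t,s)$, which is yet a third operator, distinct from both $S(t,s)(-A(s))^{\gamma}$ and the smoothing quantity $(-A(t))^{\gamma}S(t,s)$.) Both points are repaired simply by citing the relevant Schnaubelt estimates as black boxes rather than re-deriving them.
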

In general $C$ depends on the constants of \ref{asmp:AT1} and \ref{asmp:AT2}.
Note that to obtain \eqref{thrm-evolsys-frct-ineq1} one needs to use reiteration in order to obtain the improvement from exponent $\infty$ to $1$. Moreover, \eqref{thrm-evolsys-frct-ineq2c} follows from interpolation of \eqref{thrm-evolsys-frct-ineq1} and \eqref{thrm-evolsys-frct-ineq2} and reiteration.

\medskip

\subsection{Improved regularity under adjoint conditions}
Throughout this section we assume the (\AT)-conditions hold and that $E_0$ is reflexive.
In this section we will obtain further pathwise regularity properties by duality arguments.
Then $(A(t)^*)_{t\in [0,T],\omega\in\OO}$ is a family of closed densely define operators on $E_0^*$. Moreover, since $R(\lambda,A(t)^*) = R(\lambda,A(t))^*$, \ref{asmp:AT1} holds for this family as well. Furthermore, we will assume that the family of adjoints satisfies \ref{asmp:AT2} with constants $\mu^*$ and $\nu^*$, throughout the rest of this section.

Under the above assumption on the adjoint family, we know that for every $t\in (0,T]$, the family $(A(t-\tau)^*)_{\tau\in [0,t]}$ satisfies the (\AT)-conditions as well, and therefore by Theorem \ref{thrm:exist-evol} it generates an evolution family:
\begin{align}\label{defn:evoloper-adjoint}
(V(t;\tau,s))_{0\leq s\leq \tau\leq t}.
\end{align}
Recall from \cite[Proposition 2.9]{AT2}, that $S(t,s)^* = V(t;t-s,0)$, and by Theorem \ref{thrm:exist-evol} (5) and the chain rule, for $s<t$
\begin{align}\label{eqn:adjoint-evol-drvtv}
\frac{d}{ds} S(t,s)^* = -A(s)^*S(t,s)^*.
\end{align}
Moreover, for all $x\in D(A(t)^*) = D(A(t-0)^*)$ one has $s\mapsto S(t,s)^*x^* = V(t;t-s,0) x^*$ is continuously differentiable on $[0,t]$.

\begin{lemma}\label{lem:diff-prop-evolsys}
Assume the above conditions. For every $t\in (0,T]$, the mapping $s\mapsto S(t,s)$ belongs to $C^1([0,t);\calL(E_0))$,
and for all $x\in D(A(s))$ one has $\frac{d}{ds} S(t,s)x = -S(t,s)A(s)x$.
For all $0\leq s<t\leq T$, $\beta\in [0,1]$, $0<\gamma<\mu^*+\nu^*-1$, $0\leq \theta < \mu^*+\nu^*-1$, $\delta\in (0,1)$, the following inequalities hold:
\begin{align}
\|S(t,s)(-A(s))^{\beta} x\|_{E_0} &\leq C\frac{\|x\|_{E_0}}{(t-s)^{\beta}}, &x& \in D((-A(s))^{\beta}). \label{thrm-evolsys-frct-ineq42}
\\ \|S(t,s)(-A(s))^{1+\theta}x\|_{E_0} &\leq C\frac{\|x\|_{E_0}}{(t-s)^{1+\theta}}, &x& \in D((-A(s))^{1+\theta}). \label{thrm-evolsys-frct-ineq4}
\\
\|A(t)^{-\delta} S(t,s)(-A(s))^{1+\gamma} x\|_{E_0} &\leq C\frac{\|x\|_{E_0}}{(t-s)^{1+\gamma-\delta}},
& x& \in D((-A(s))^{1+\gamma}).
\label{thrm-evolsys-frct-ineq5}
\end{align}
\end{lemma}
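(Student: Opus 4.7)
The plan is to reduce every assertion to the already-established estimates for the adjoint (reversed-time) evolution family $V(t;\cdot,\cdot)$ generated by $B(\tau):=A(t-\tau)^{*}$, using the identity $S(t,s)^{*}=V(t;t-s,0)$ and duality. Since $E_0$ is reflexive, fractional powers commute with adjoints, so $((-A(s))^{\alpha})^{*}=(-A(s)^{*})^{\alpha}=(-B(t-s))^{\alpha}$ for $\alpha\ge 0$. Setting $\tau=t-s$, this yields, on the dense subspace $D((-A(s))^{\alpha})$,
\begin{equation*}
\|S(t,s)(-A(s))^{\alpha}\|_{\calL(E_0)}=\|(-B(\tau))^{\alpha}V(t;\tau,0)\|_{\calL(E_0^{*})},
\end{equation*}
and an analogous formula with a factor $(-A(t))^{-\delta}=((-B(0))^{-\delta})^{*}$ appended on the left.

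The three estimates are then direct consequences of Theorem \ref{thrm:exist-evol} and Lemma \ref{lem:-evolsys-frct-ineq} applied to $V$ (with $\mu^{*},\nu^{*}$ in place of $\mu,\nu$). For \eqref{thrm-evolsys-frct-ineq42}, the endpoints $\beta\in\{0,1\}$ are exactly Theorem \ref{thrm:exist-evol}(4)--(5) for $V$, and intermediate $\beta$ follows by the moment inequality for fractional powers. For \eqref{thrm-evolsys-frct-ineq4}, \eqref{thrm-evolsys-frct-ineq2} applied to $V$ bounds $B(\tau)V(t;\tau,0)$ in $(E_0^{*},D(B(\tau)))_{\theta,1}$ by $C\tau^{-1-\theta}$, and the embedding \eqref{array-cnt-emb2} into $D((-B(\tau))^{\theta})$ delivers the bound on $(-B(\tau))^{1+\theta}V(t;\tau,0)$; the endpoint $\theta=0$ is again Theorem \ref{thrm:exist-evol}(5). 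For \eqref{thrm-evolsys-frct-ineq5}, $(-B(0))^{-\delta}$ maps $E_0^{*}$ continuously into $D((-B(0))^{\delta})\hookrightarrow (E_0^{*},D(B(0)))_{\delta,\infty}$ by \eqref{array-cnt-emb2}, after which \eqref{thrm-evolsys-frct-ineq2c} applied to $V$ combined with the same embedding yields the claim.

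For the differentiability statement, estimate \eqref{thrm-evolsys-frct-ineq42} at $\beta=1$ already shows that $S(t,s)A(s)$ extends to an element of $\calL(E_0)$ of norm $\le C(t-s)^{-1}$. Testing against $x^{*}\in E_0^{*}$, the scalar map $s\mapsto\langle x^{*},S(t,s)x\rangle=\langle V(t;t-s,0)x^{*},x\rangle$ is $C^{1}$ on $[0,t)$ by \eqref{eqn:adjoint-evol-drvtv}, with derivative $-\langle x^{*},S(t,s)A(s)x\rangle$ whenever $x\in D(A(s))$; combined with norm-continuity of $\tau\mapsto B(\tau)V(t;\tau,0)$ on $(0,t]$ (a standard consequence of the (\AT)-conditions, cf.\ the representation used in the proof of Proposition \ref{prop:evolsys-adapted}) this upgrades to $C^{1}$-differentiability of $s\mapsto S(t,s)$ in $\calL(E_0)$. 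I expect the main obstacle to be the bookkeeping required to identify $((-A(s))^{\alpha})^{*}$ with $(-A(s)^{*})^{\alpha}$ (so that the duality translation between $S$ and $V$ goes through cleanly) and the passage from the weak/pointwise derivative to norm differentiability; both are routine in the reflexive, sectorial setting but warrant care.
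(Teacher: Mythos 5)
Your proposal is correct and follows essentially the same route as the paper: reduce to the adjoint family via $S(t,s)^*=V(t;t-s,0)$, apply the estimates of Lemma \ref{lem:-evolsys-frct-ineq} (with $\mu^*,\nu^*$) to $V$ together with the embeddings \eqref{array-cnt-emb2}, and dualize back using reflexivity and $((-A(s))^{\alpha})^*=(-A(s)^*)^{\alpha}$; the differentiability claim is likewise obtained by taking adjoints in \eqref{eqn:adjoint-evol-drvtv}. The only cosmetic difference is that for \eqref{thrm-evolsys-frct-ineq42} you interpolate the endpoints $\beta\in\{0,1\}$ via the moment inequality, whereas the paper invokes \eqref{thrm-evolsys-frct-ineq1} for the adjoint family directly; these are equivalent.
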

In particular, we see that for every $s<t$, the operator $S(t,s)A(s)$ uniquely extends to a bounded operator on $E_0$ of norm $C(t-s)^{-1}$, which will be denoted by $S(t,s)A(s)$ again. As before, the constant $C$ depends on the constants in the (\AT)-conditions for $A$ and $A^*$.

Note that the additional condition on the adjoint family yields the improvement \eqref{thrm-evolsys-frct-ineq42} of \eqref{thrm-evolsys-frct-ineq2b}.

\begin{proof}
It follows from \eqref{eqn:adjoint-evol-drvtv} that
\[\frac{d}{ds} S(t,s) = \Big(\frac{d}{ds} S(t,s)^*\Big)^* = (-A(s)^*S(t,s)^*)^*,\]
where we identify $E_0$ and $E_0^{**}$. Using $S(t,s)^* = V(t;t-s,0)$ as in \eqref{defn:evoloper-adjoint}, it follows that $(-A(s)^*S(t,s)^*)^* \in \calL(E_0)$. Hence, for any $x\in D(A(s))$ and every $x^* \in E_0^*$, one has
\begin{align*}
\lb (-A(s)^*S(t,s)^*)^*x, x^* \rb = - \lb x, A(s)^*S(t,s)^*x^* \rb = \lb -S(t,s)A(s)x, x^*\rb.
\end{align*}
By a Hahn-Banach argument, we obtain $\frac{d}{ds}S(t,s)x = -S(t,s)A(s)x$ for all $x\in D(A(s))$.

By \eqref{thrm-evolsys-frct-ineq1} if $\theta=0$ and otherwise \eqref{thrm-evolsys-frct-ineq2} for the adjoint family, we find that
\begin{align*}
\|(-A(s)^*)^{1+\theta}S(t,s)^*\|_{\calL(E_0^*)} &= \|(-A(t-(t-s))^*)^{1+\theta}V(t;t-s,0)\|_{\calL(E_0^*)}
\\ & \leq C(t-s)^{-1-\theta}.
\end{align*}
Let $x\in D((-A(s))^{1+\theta})$ be arbitrary. Then
\begin{align*}
\|S(t,s)(-A(s))^{1+\theta}x\|_{E_0} &= \sup_{\|x^*\|_{E_0^*} \leq 1} |\lb S(t,s)(-A(s))^{1+\theta}x, x^*\rb | \\
 &= \sup_{\|x^*\|_{E_0^*} \leq 1} |\lb x, (-A(s)^*)^{1+\theta}S(t,s)^*x^* \rb | \\
 &\leq \|x\|_{E_0} \|(-A(s)^*)^{1+\theta}S(t,s)^*\|_{\calL(E_0^*)}
 \\ & \leq C(t-s)^{-1-\theta}\|x\|_{E_0}
\end{align*}
and \eqref{thrm-evolsys-frct-ineq4} follows. The proofs of \eqref{thrm-evolsys-frct-ineq42} and \eqref{thrm-evolsys-frct-ineq5} follow in the same way from \eqref{thrm-evolsys-frct-ineq1} and \eqref{thrm-evolsys-frct-ineq2c}, respectively.
\end{proof}

\section{Pathwise regularity of convolutions\label{sec:pathwisereg}}

In this section we will assume the following hypothesis.

\let\ALTERWERTA\theenumi
\let\ALTERWERTB\labelenumi
\def\theenumi{(H2)}
\def\labelenumi{(H2)}
\begin{enumerate}
  \item\label{hpths:ATplus} Both $(A(t,\omega))_{t\in [0,T],\omega\in \OO}$ and $(A(t,\omega)^*)_{t\in [0,T],\omega\in \OO}$ satisfy the (\AT)-conditions.
\end{enumerate}
\let\theenumi\ALTERWERTA
\let\labelenumi\ALTERWERTB

\subsection{A class of time independent spaces and interpolation}
The following hypothesis is needed to deal with the time-dependent domains in an efficient way.

\let\ALTERWERTA\theenumi
\let\ALTERWERTB\labelenumi
\def\theenumi{(H3)}
\def\labelenumi{(H3)}
\begin{enumerate}
  \item\label{hpths:interpol-spce-wk}
  There exist $\eta_+ \in (0,1]$ and $\eta_-\in (0,\mu^*+\nu^*-1)$ and two families of interpolation spaces $(\tilde{E}_\eta)_{\eta\in[0,\eta_+]}$ and $(\tilde{E}_\eta)_{\eta\in(-\eta_-,0]}$ such that
   \begin{enumerate}[(i)]
   \item For all $-\eta_- < \eta_4 \leq \eta_3 \leq 0\leq \eta_2\leq\eta_1\leq\eta_+$
   \[ \tilde{E}_{\eta_+} \hookrightarrow \tilde{E}_{\eta_1} \hookrightarrow \tilde{E}_{\eta_2} \hookrightarrow \tilde{E}_0 = E_0 \hookrightarrow \tilde{E}_{\eta_3} \hookrightarrow \tilde{E}_{\eta_4}.\]
   \item For all $\eta \in [0,\eta_+)$, $E_{\eta,1}^t \hookrightarrow \tilde{E}_\eta \hookrightarrow E_0$, with uniform constants in $(t,\omega)$.
   \item For $\eta \in (0, \eta_-)$, $E_0$ is dense in $\tilde{E}_{-\eta}$ and for all $x\in E_0$ and $\varepsilon>0$ one has
  \[ \|(-A(t))^{-\eta-\varepsilon} x\|_{E_0} \leq C \|x\|_{\tilde{E}_{-\eta}},\]
  where $C$ is independent of $(t,\omega)$.
   \end{enumerate}
\end{enumerate}
\let\theenumi\ALTERWERTA
\let\labelenumi\ALTERWERTB
If $E_1= D(A(t))$ is constant one can just take $\tilde{E}_{\eta} = (E_0, E_1)_{\eta,p}$ for some $p\in[1, \infty)$. Moreover, in particular it follows from (iii) that $(-A(t))^{-\eta-\varepsilon}$ has a unique continuous extension to a bounded operator from $E_0$ into $\tilde{E}_{-\eta}$. From Remark \ref{rmrk:evol-ineqs} it will become clear why we assume $\eta_-<\mu^*+\nu^*-1$.

\begin{remark}\label{rmrk:InterpolSpaceHypoth} \
\begin{enumerate}
\item If $A(t)$ is a differential operator with time dependent boundary conditions, then in general $E_\eta^t$ will be time dependent as well. In this case one typically takes $\tilde{E}_\eta$ to be the space obtained by real interpolation from $E_0$ and the space $E_1 \supset D(A(t))$, where $E_1$ is the space obtained by leaving out the boundary conditions.

\item Note that it is allowed to choose $\tilde{E}_{-\eta} = E_0$ for all $\eta\in (0,\eta_-)$. However, in the case $A(t)$ is a differential operator, one often takes $\tilde{E}_{\eta}$ to be an extrapolation space which allows a flexibility for the noise term for $\eta$ large enough.
\end{enumerate}
\end{remark}

\begin{remark}\label{rmrk:evol-ineqs}
Assume hypotheses \ref{hpths:ATplus} and \ref{hpths:interpol-spce-wk}. The following observation will be used throughout the rest of the paper. Let $\e > 0$, $a\in (0,\eta_+)$, $s<t$ and set $r = \tfrac{t+s}{2}$. By \eqref{array-cnt-emb1}, \eqref{array-cnt-emb2} and \eqref{thrm-evolsys-frct-ineq4}, for all $\theta \in [0,\eta_-)$ and $x\in \tilde{E}_{-\theta}$, we find that for $\e > 0$ small enough,
\begin{align*}
\|S(t,s)A(s) x\|_{\tilde{E}_a} &\leq C\|S(t,r)\|_{\calL(E_0, E_a^t)} \|S(r, s) (-A(s))^{1+\theta+\varepsilon} (-A(s))^{-\theta-\varepsilon} x\|_{E_0} \\
 &\leq C(t-s)^{-a-\e-1-\theta} \|x\|_{\tilde{E}_{-\theta}}.
\end{align*}
Note that here we use $\eta_-<\mu^*+\nu^*-1$. Similarly, with \eqref{thrm-evolsys-frct-ineq2b} we find that for all $\theta\in (0,\eta_-)$
\begin{equation}\label{eq:rmrk-ineqs-ineq2}
\|S(t,s)x\|_{\tilde{E}_a} \leq C(t-s)^{-a-\e-\theta} \|x\|_{\tilde{E}_{-\theta}},
\end{equation}
where in both estimates $C$ depends on $\omega$.
\end{remark}

The next lemma is taken from \cite[Lemma 2.3]{Veraar-SEE}, and this is the place where the assumption that $(\tilde{E}_\eta)_{\eta\in[0,\eta_+]}$ are interpolation spaces, is used.
\begin{lemma}\label{lemma:interpol}
Assume \ref{hpths:ATplus} and \ref{hpths:interpol-spce-wk}. Let $\alpha \in (0,\eta_+]$ and $\delta, \gamma >0 $ such that $\gamma + \delta \leq \alpha$. Then there exists a constant $C > 0$ depending on $\omega$, such that
\begin{align*}
\|S(t,r)x - S(s,r)x\|_{\tilde{E}_\delta} \leq C(t-s)^\gamma \|x\|_{E_{\alpha,1}^r}, \ \ \ 0\leq r\leq s\leq t\leq T, \ \ x\in E_{\alpha,1}^r.
\end{align*}
Moreover, if $x\in E_{\alpha,1}^r$, then $t\mapsto S(t,r)x \in C([r,T];\tilde{E}_\alpha)$.
\end{lemma}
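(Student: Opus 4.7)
The plan is to establish the Hölder estimate via the fundamental theorem of calculus along the evolution family, then to promote this to norm-continuity in $\tilde E_\alpha$ through an interpolation argument. For $x \in D(A(r))$, Theorem \ref{thrm:exist-evol}(5) supplies the $E_0$-valued identity
\[
S(t,r)x - S(s,r)x \;=\; \int_s^t A(u)\, S(u,r)\, x \, du, \qquad r \leq s \leq t \leq T,
\]
and the core task is to bound the integrand in $\tilde E_\delta$. I would split through the midpoint $\tau = (u+r)/2$, writing $A(u) S(u,r)x = [A(u) S(u,\tau)]\,[S(\tau,r) x]$. Using \eqref{thrm-evolsys-frct-ineq3} with $\theta = \alpha$ together with the embeddings $E^r_{\alpha,1} \hookrightarrow D((-A(r))^{\alpha})$ and $D((-A(\tau))^{\alpha}) \hookrightarrow E^\tau_{\alpha,\infty}$ from \eqref{array-cnt-emb2}, the right factor is controlled by $C\|x\|_{E^r_{\alpha,1}}$. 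The left factor is handled by \eqref{thrm-evolsys-frct-ineq2c} followed by the embedding $E^u_{\delta,1}\hookrightarrow \tilde E_\delta$ of \ref{hpths:interpol-spce-wk}(ii), giving $\|A(u) S(u,\tau)\|_{\calL(E^\tau_{\alpha,\infty},\, \tilde E_\delta)} \leq C(u-\tau)^{\alpha-\delta-1}$. Multiplying yields
\[
\|A(u) S(u,r)\, x\|_{\tilde E_\delta} \;\leq\; C(u-r)^{\alpha-\delta-1}\, \|x\|_{E^r_{\alpha,1}}.
\]

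Since $0 < \alpha - \delta \leq 1$, subadditivity of $\sigma \mapsto \sigma^{\alpha-\delta}$ gives $\int_s^t (u-r)^{\alpha-\delta-1}\,du \leq (t-s)^{\alpha-\delta}/(\alpha-\delta)$, and the assumption $\gamma \leq \alpha-\delta$ absorbs the excess power via $(t-s)^{\alpha-\delta} \leq T^{\alpha-\delta-\gamma}(t-s)^\gamma$. This establishes the Hölder inequality for $x \in D(A(r))$, and a density argument—$D(A(r))$ is dense in the real interpolation space $E^r_{\alpha,1}$, and both sides of the inequality are continuous in $x$ under the $E^r_{\alpha,1}$-norm—extends it to all $x \in E^r_{\alpha,1}$.

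For the continuity assertion, the Hölder estimate just derived already yields $t\mapsto S(t,r)x \in C([r,T];\tilde E_{\delta'})$ for every $\delta' < \alpha$. To upgrade to continuity in $\tilde E_\alpha$ itself, I would pick $\alpha' \in (\alpha,\eta_+)$ and combine \eqref{thrm-evolsys-frct-ineq1} with \ref{hpths:interpol-spce-wk}(ii) to show that the orbit $\{S(t,r)x\}_{t\in [r,T]}$ is bounded in the stronger space $\tilde E_{\alpha'}$, then interpolate this uniform bound against the $\tilde E_{\delta'}$-Hölder bound through the interpolation scale $(\tilde E_\eta)$ provided by \ref{hpths:interpol-spce-wk}. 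Continuity at the left endpoint $t=r$ reduces, via the same density plus uniform bound, to the statement for $x \in D(A(r))$, where $\|S(t,r)x - x\|_{\tilde E_\alpha}\to 0$ can be read off from the integral representation. I expect the main obstacle to be precisely this upgrade from $\tilde E_{\delta'}$ to $\tilde E_\alpha$, particularly in the boundary case $\alpha = \eta_+$: there \ref{hpths:interpol-spce-wk}(ii) no longer supplies a strictly smoother space for the uniform bound, and one must either tighten the hypothesis on the interpolation scale or run a separate approximation argument.
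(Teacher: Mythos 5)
The paper does not prove this lemma itself but imports it from \cite[Lemma 2.3]{Veraar-SEE}; the sentence immediately preceding the statement --- that this is the place where the assumption that $(\tilde E_\eta)_{\eta\in[0,\eta_+]}$ are \emph{interpolation} spaces is used --- already signals that your argument is off track, since your proof of the H\"older estimate never invokes that assumption. The concrete gap is in your bound for the left factor: you apply \eqref{thrm-evolsys-frct-ineq2c} with target regularity $\eta=\delta$ in order to land in $E^u_{\delta,1}\hookrightarrow\tilde E_\delta$, but \eqref{thrm-evolsys-frct-ineq2c} is only valid for $\eta\in(0,\mu+\nu-1)$. The lemma imposes no relation between $\delta$ and $\mu+\nu-1$, and in the applications (e.g.\ the proof of Theorem \ref{thm:F-Holder}, where the lemma is used with $\alpha=\delta+\lambda+\e$ and $\delta$ constrained only by $\delta+\lambda<\min\{1-\theta,\eta_+\}$) one genuinely needs $\delta\ge\mu+\nu-1$ when the H\"older exponents in \ref{asmp:AT2} are small. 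This is not a mere citation technicality: in the Acquistapace--Terreni framework the regularity of $A(u)S(u,\tau)y$ in the scale $E^u_{\eta,1}$ is capped at $\eta<\mu+\nu-1$ by the H\"older regularity of the coefficients, so the pointwise bound $\|A(u)S(u,r)x\|_{\tilde E_\delta}\le C(u-r)^{\alpha-\delta-1}\|x\|_{E^r_{\alpha,1}}$ cannot be obtained by routing through $E^u_{\delta,1}$.

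The intended argument, and the reason the interpolation hypothesis in \ref{hpths:interpol-spce-wk} is needed here, is to prove two estimates and interpolate between them: (i) the $E_0$-estimate $\|S(t,r)x-S(s,r)x\|_{E_0}\le C(t-s)^\alpha\|x\|_{E^r_{\alpha,\infty}}$, which your fundamental-theorem-of-calculus computation does give (there the crude bound $\|A(u)S(u,r)x\|_{E_0}\le C(u-r)^{\alpha-1}\|x\|_{E^r_{\alpha,\infty}}$ suffices and no positive target regularity is required); and (ii) the uniform bound $\sup_{t\in[r,T]}\|S(t,r)x\|_{\tilde E_\alpha}\le C\|x\|_{E^r_{\alpha,1}}$, obtained from the boundedness of $S(t,r):E^r_{\alpha,1}\to E^t_{\alpha,1}$ (reiteration together with \eqref{thrm-evolsys-frct-ineq3} and \eqref{array-cnt-emb2}) followed by \ref{hpths:interpol-spce-wk}(ii). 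Since $\tilde E_\delta$ is an interpolation space of exponent $\delta/\alpha$ for the couple $(E_0,\tilde E_\alpha)$, these combine to $\|S(t,r)x-S(s,r)x\|_{\tilde E_\delta}\le C(t-s)^{\alpha-\delta}\|x\|_{E^r_{\alpha,1}}\le C(t-s)^{\gamma}\|x\|_{E^r_{\alpha,1}}$, with no restriction on $\delta$ beyond $\gamma+\delta\le\alpha$. Your treatment of the right factor, the subadditivity step, and the density argument are fine, and your plan for the continuity assertion (density of $D(A(r))$ in $E^r_{\alpha,1}$ plus a uniform bound) is the standard one; the endpoint $\alpha=\eta_+$ you flag is indeed delicate under \ref{hpths:interpol-spce-wk}(ii) as stated, but that endpoint is never used elsewhere in the paper.
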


\subsection{Sobolev spaces}
Let $X$ be a Banach space. For $\alpha\in (0,1)$, $p\in [1, \infty)$ and $a<b$, recall that a function $f:(a,b)\to X$ is said to be in the {\em Sobolev space} $W^{\alpha,p}(a,b;X)$  if $f\in L^p(a,b;X)$ and
\[[f]_{W^{\alpha,p}(a,b;X)} := \Big(\int_a^b \int_a^b \frac{\|f(t) - f(s)\|^p}{|t-s|^{\alpha p + 1}} \, ds \, dt \Big)^{1/p} <\infty.\]
Letting $\|f\|_{W^{\alpha,p}(a,b;X)} = \|f\|_{L^p(a,b;X)} + [f]_{W^{\alpha,p}(a,b;X)}$, this space becomes a Banach space. By symmetry one can write
\[\int_a^b \int_a^b B(t,s) \, ds \, dt = 2\int_a^b \int_a^t B(t,s)\, ds \, dt = 2\int_a^b \int_s^b B(t,s) \, dt \, ds\]
where $B(t,s) = \frac{\|f(t) - f(s)\|^p}{|t-s|^{\alpha p + 1}}$. This will be used often below.

A function $f:(a,b)\to X$ is said to be in the {\em H\"older space} $C^{\alpha}(a,b;X)$ if
\[[f]_{C^{\alpha}(a,b;X)} = \sup_{a<s<t<b} \frac{\|f(t) - f(s)\|}{|t-s|^\alpha}<\infty.\]
Letting $\|f\|_{C^{\alpha}(a,b;X)} = \sup\limits_{t\in (a,b)}\|f(t)\|_X + [f]_{C^{\alpha}(a,b;X)}$, this space becomes a Banach space. Moreover, every $f\in C^{\alpha}(a,b;X)$ has a unique extension to a continuous function $f:[a,b]\to X$.
For $p=\infty$, we also write $W^{\alpha,\infty}(0,T;X) = C^{\alpha}(0,T;X)$.

If $0<\alpha<\beta<1$, then trivially,
\begin{align*}
C^{\alpha}(a,b;X) \hookrightarrow W^{\alpha,p}(a,b;X), \ \  p\in [1, \infty].
\end{align*}
A well-known result is the following vector-valued Sobolev embedding:
\begin{equation}\label{eq:fractionalSobolevembReprForm}
W^{\alpha,p}(a,b;X) \hookrightarrow C^{\alpha-\frac{1}{p}}(a,b;X), \ \ \ \alpha>\frac1p.
\end{equation}
The proof in \cite[14.28 and 14.40]{Leoni09} and \cite[Theorem 8.2]{fracsob} for the scalar case extends to the vector-valued setting. Here the embedding means that each $f\in W^{\alpha,p}(a,b;X)$ has a version which is continuous
and this function lies in $C^{\alpha-\frac{1}{p}}(a,b;X)$.

\subsection{Regularity of generalized convolutions}

We can now present the first main result of this section. It gives a space-time regularity result for the abstract Cauchy problem:
\begin{equation}\label{eq:detproblem}
u'(t) = A(t) u(t) + f(t),  \ \ \ u(0) = 0.
\end{equation}
Recall that the solution is given by the convolution:
\[S*f(t) := \int_0^t S(t,\sigma) f(\sigma)\, d\sigma.\]

The next result extends \cite[Proposition 3.2]{Veraar-SEE}, where a space-time H\"older continuity result has been obtained.
\begin{theorem}\label{thm:F-Holder}
Assume \ref{hpths:ATplus} and \ref{hpths:interpol-spce-wk}. Let $\theta \in [0,\eta_-)$, $p \in [1,\infty)$ and $\delta, \lambda > 0$ such that $\delta + \lambda < \min\{1- \theta, \eta_+\}$. Suppose $f\in L^0(\OO;L^p(0,T;\tilde{E}_{-\theta}))$. Then the stochastic process
$S * f$ is in $L^0(\OO;W^{\lambda,p}(0,T;\tilde{E}_\delta))$ and satisfies
\[\|S * f\|_{W^{\lambda,p}(0,T;\tilde{E}_\delta)} \leq C\|f\|_{L^p(0,T;\tilde{E}_{-\theta})} \ \ \ a.s.,\]
where $C$ depends on $\omega$.
\end{theorem}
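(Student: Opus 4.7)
The plan is to bound the $L^p$-norm and the Sobolev seminorm of $S*f$ separately. Because $\delta+\lambda<\min\{1-\theta,\eta_+\}$, I would pick $\e,\kappa,\kappa',\e''>0$ small enough that $\alpha:=\delta+\theta+\e<1$, $\lambda':=\lambda+\kappa>\lambda$, $\delta+\lambda'+\kappa'\le\eta_+$, and $\beta:=\delta+\lambda'+\kappa'+\theta+\e''<1$. Remark~\ref{rmrk:evol-ineqs} furnishes the kernel bound $\|S(t,\sigma)\|_{\calL(\tilde E_{-\theta},\tilde E_\delta)}\le C(t-\sigma)^{-\alpha}$, so Young's convolution inequality immediately gives the $L^p$-estimate $\|S*f\|_{L^p(0,T;\tilde E_\delta)}\le C\|f\|_{L^p(0,T;\tilde E_{-\theta})}$. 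For the seminorm I would use the splitting
\[(S*f)(t)-(S*f)(s)=\underbrace{\int_s^t S(t,\sigma)f(\sigma)\,d\sigma}_{J_1(t,s)}+\underbrace{\int_0^s[S(t,s)-I]S(s,\sigma)f(\sigma)\,d\sigma}_{J_2(t,s)},\]
after rewriting $S(t,\sigma)-S(s,\sigma)=[S(t,s)-I]S(s,\sigma)$ via the evolution property.

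For $J_1$ the same kernel bound, together with the change of variable $u=t-\sigma$, gives $\|J_1(t,s)\|_{\tilde E_\delta}\le C\int_0^h u^{-\alpha}\|f(t-u)\|_{\tilde E_{-\theta}}\,du$ with $h=t-s$. H\"older's inequality in $u$ with the weight $u^{-\alpha}\,du$ upgrades this to $\|J_1(t,s)\|_{\tilde E_\delta}^p\le Ch^{(1-\alpha)(p-1)}\int_0^h u^{-\alpha}\|f(t-u)\|_{\tilde E_{-\theta}}^p\,du$. Inserting this into $\int_0^T\!\int_0^t h^{-\lambda p-1}\|J_1(t,s)\|_{\tilde E_\delta}^p\,ds\,dt$ and exchanging the $h$- and $u$-integrals by Fubini, the inner integral $\int_u^t h^{(1-\alpha)(p-1)-\lambda p-1}\,dh$ is---after a case split on the sign of $(1-\alpha)(p-1)-\lambda p$---bounded either by a constant or by $u^{p(1-\alpha-\lambda)-1}$; in either case the remaining weight in $u$ is integrable near $0$ because $\alpha<1$ and $\alpha+\lambda<1$, and a last Fubini in $(u,t)$ collapses the $t$-integration into $\|f\|_{L^p(0,T;\tilde E_{-\theta})}^p$.

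For $J_2$, Lemma~\ref{lemma:interpol} applied at $r=s$ with smoothness exponent $\delta+\lambda'+\kappa'$ and H\"older exponent $\lambda'$ yields $\|[S(t,s)-I]y\|_{\tilde E_\delta}\le C(t-s)^{\lambda'}\|y\|_{E^s_{\delta+\lambda'+\kappa',1}}$. With $y=S(s,\sigma)f(\sigma)$ I would factor $S(s,\sigma)=S(s,s')S(s',\sigma)$ at $s':=\tfrac{s+\sigma}{2}$: \eqref{thrm-evolsys-frct-ineq1} gives $\|S(s,s')\|_{\calL(E_0,E^s_{\delta+\lambda'+\kappa',1})}\le C(s-\sigma)^{-\delta-\lambda'-\kappa'}$, while \eqref{thrm-evolsys-frct-ineq42} combined with \ref{hpths:interpol-spce-wk}(iii) gives $\|S(s',\sigma)\|_{\calL(\tilde E_{-\theta},E_0)}\le C(s-\sigma)^{-\theta-\e''}$. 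Combining,
\[\|J_2(t,s)\|_{\tilde E_\delta}\le C(t-s)^{\lambda'}\int_0^s(s-\sigma)^{-\beta}\|f(\sigma)\|_{\tilde E_{-\theta}}\,d\sigma=:C(t-s)^{\lambda'}\Phi(s).\]
Young's convolution inequality (valid because $\beta<1$) bounds $\|\Phi\|_{L^p(0,T)}$ by $\|f\|_{L^p(0,T;\tilde E_{-\theta})}$, and the $J_2$ contribution to the Sobolev seminorm is $\le C\int_0^T\!\int_0^t(t-s)^{(\lambda'-\lambda)p-1}\Phi(s)^p\,ds\,dt$, whose $t$-integral converges because $\lambda'>\lambda$. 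The main hurdle I anticipate is pure bookkeeping: the single margin $1-\delta-\theta-\lambda>0$ has to be allocated so as to produce simultaneously the two positive gaps $\lambda'-\lambda$ (for the outer $t$-integrability in $J_2$) and $1-\beta$ (so that Young applies to $\Phi$), while also leaving room for the small $\e$'s coming from the embeddings in \ref{hpths:interpol-spce-wk}(ii)--(iii) and Lemma~\ref{lem:-evolsys-frct-ineq}.
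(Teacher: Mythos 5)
Your proposal is correct and follows essentially the same route as the paper: the same splitting of $(S*f)(t)-(S*f)(s)$ into the tail integral over $(s,t)$ and the difference term over $(0,s)$, the same use of Remark~\ref{rmrk:evol-ineqs}, Lemma~\ref{lemma:interpol} and the midpoint factorization $S(s,\sigma)=S(s,\tfrac{s+\sigma}{2})S(\tfrac{s+\sigma}{2},\sigma)$, and the same Young-type conclusion for the $L^p$-norm and the $J_2$ contribution. The only deviation is cosmetic: for $J_1$ you use H\"older with the weight $u^{-\alpha}\,du$ followed by Fubini (a Hardy--Young-type argument, which the paper itself employs in the proof of Theorem~\ref{thm:SP-Holder-ctu}), whereas the paper uses Minkowski's integral inequality followed by Young; both are routine and your exponent bookkeeping works out.
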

Maximal $L^p$-regularity results for \eqref{eq:detproblem} holds under additional assumptions on $(A(t))_{t\in [0,T]}$ and can be found in \cite{PortalStr}.

\begin{proof}
Let $\varepsilon>0$ be so small that $\delta+\lambda+\theta+2\varepsilon< 1$. By \eqref{eq:rmrk-ineqs-ineq2}, we find that
\begin{align*}
\|S &* f(t)\|_{\tilde{E}_{\delta}} \leq C\int_0^t (t-\sigma)^{-\delta-\theta-\varepsilon} \|f(\sigma)\|_{\tilde{E}_{-\theta}}  \,  d\sigma.
\end{align*}
Therefore, Young's inequality yields that
\begin{align*}
\|S * f\|_{L^p(0,T;\tilde{E}_\delta)}\leq C \|f\|_{L^p(0,T;\tilde{E}_{-\theta})}.
\end{align*}

Next we estimate the seminorm $[S*f]_{W^{\lambda,p}(0,T;E_{\delta})}$.
For $0\leq s < t \leq T$, we write
\begin{align*}
\|(S * f)(t) - (S * f)(s)\|_{\tilde{E}_{\delta}} & \leq \int_0^s \|(S(t,\sigma) - S(s,\sigma)) f(\sigma)\|_{\tilde{E}_{\delta}} \;d\sigma
\\ & \qquad + \int_s^t \|S(t,\sigma)f(\sigma)\|_{\tilde{E}_{\delta}} \;d\sigma.
\end{align*}

Observe that applying \eqref{thrm-evolsys-frct-ineq1} and \eqref{thrm-evolsys-frct-ineq42} yields
\begin{equation}\label{eq:Sfongelijkheid}
\begin{aligned}
\|&S(s,\sigma)f(\sigma)\|_{E_{\delta+\lambda+\e,1}^s} \leq \|S(s,\tfrac{\sigma+s}{2})\|_{\calL(E_0, E_{\delta+\lambda+\e,1}^s)} \|S(\tfrac{\sigma+s}{2},\sigma)f(\sigma)\|_{E_0} \\
&\leq (s-\sigma)^{-\delta-\lambda-\e} \|S(\tfrac{\sigma+s}{2},\sigma) ((-A(\sigma))^{\theta+\e})\|_{\calL(E_0)} \|(-A(\sigma))^{-\theta-\e}f(\sigma)\|_{E_0}  \\
&\leq (s-\sigma)^{-\delta - \lambda - \theta -2\e} \|f(\sigma)\|_{\tilde{E}_{-\theta}}.
\end{aligned}
\end{equation}
Hence by Lemma \ref{lemma:interpol} we obtain
\begin{align*}
\int_0^s \|(S(t,\sigma) &- S(s,\sigma))f(\sigma)\|_{\tilde{E}_{\delta}}\;d\sigma \leq C(t-s)^{\lambda+\e} \int_0^s \|S(s,\sigma)f(\sigma)\|_{E_{\delta+\lambda+\e,1}^s} \, d\sigma
\\&\leq C(t-s)^{\lambda+\e} \int_0^s (s-\sigma)^{-\delta -\lambda- \theta -2\e} \|f(\sigma)\|_{\tilde{E}_{-\theta}} \;d\sigma.
\end{align*}
Now it follows from integration over $t$ and then Young's inequality that
\begin{align*}
\int_0^T &\int_s^T (t-s)^{-1-\lambda p} \Big(\int_0^s \|(S(t,\sigma) - S(s,\sigma))f(\sigma)\|_{\tilde{E}_{\delta}}\;d\sigma\Big)^p \, dt \, ds
\\ & \leq C \int_0^T \int_s^T (t-s)^{-1 + \varepsilon p } \Big(\int_0^s (s-\sigma)^{-\delta - \lambda - \theta -2\e}\|f(\sigma)\|_{\tilde{E}_{-\theta}} \;d\sigma\Big)^{p} \, dt \, ds
\\ & \leq C  \int_0^T  \Big(\int_0^s (s-\sigma)^{-\delta - \lambda - \theta -2\e}\|f(\sigma)\|_{\tilde{E}_{-\theta}} \;d\sigma\Big)^{p} \, ds
\\ & \leq C  \|f\|_{L^p(0,T;\tilde{E}_{-\theta})}^p.
\end{align*}

For the other term by \eqref{thrm-evolsys-frct-ineq1}  we obtain
\begin{align*}
\int_s^t &\|S(t,\sigma)f(\sigma)\|_{\tilde{E}_{\delta}}\;d\sigma \leq \int_0^t \one_{(s,t)}(\sigma) (t-\sigma)^{-\delta - \theta-\e}\|f(\sigma)\|_{\tilde{E}_{-\theta}}\;d\sigma.
\end{align*}
Integrating over $s\in (0,t)$, it follows from Minkowski's inequality that
\begin{align*}
\Big(\int_0^t &\Big( (t-s)^{-\frac{1}{p} - \lambda}\int_0^t \one_{(s,t)}(\sigma) \|S(t,\sigma)f(\sigma)\|_{\tilde{E}_{\delta}}\;d\sigma \Big)^{p} \, ds\Big)^{1/p}
\\ & \leq C  \Big(\int_0^t \Big(\int_0^t (t-s)^{-\frac{1}{p} - \lambda} \one_{(s,t)}(\sigma) (t-\sigma)^{-(\delta + \theta+ \e)}\|f(\sigma)\|_{\tilde{E}_{-\theta}}\;d\sigma \Big)^{p} \, ds \Big)^{1/p}
\\ & \leq C \int_0^t \Big(\int_0^t (t-s)^{-1 - \lambda p} \one_{(s,t)}(\sigma) (t-\sigma)^{-(\delta + \theta+\e)p}\|f(\sigma)\|_{\tilde{E}_{-\theta}}^p\;ds \Big)^{1/p} \, d\sigma
\\ & \leq C \int_0^t (t-\sigma)^{-(\delta + \theta+\e+\lambda)}\|f(\sigma)\|_{\tilde{E}_{-\theta}}  \, d\sigma.
\end{align*}
Taking $p$-th moments in $t\in (0,T)$, it follows from Young's inequality that
\begin{align*}
\int_0^T \int_0^t &\Big( (t-s)^{-\frac{1}{p} - \lambda}\int_0^t \one_{(s,t)}(\sigma) \|S(t,\sigma)f(\sigma)\|_{\tilde{E}_{\delta}}\;d\sigma \Big)^{p} \, ds\, dt
\\ & \leq C \int_0^T \Big(\int_0^t (t-\sigma)^{-(\delta + \theta+\e+\lambda)}\|f(\sigma)\|_{\tilde{E}_{-\theta}}  \, d\sigma\Big)^{p} \, dt
\leq C  \|f\|_{L^p(0,T;\tilde{E}_{-\theta})}^p.
\end{align*}
Combining the estimates, the result follows.
\end{proof}

The second main result of this section gives a way to obtain pathwise regularity of the solution to the stochastic Cauchy problem
\[du = A(t) u(t) \, dt + G \, d W.\]
given by \eqref{eq:reprformintro}. For details on this we refer to Section \ref{sec:repr} below.

Recall the convention that for a Banach space $X$, we put $W^{\alpha,\infty}(0,T;X) = C^{\alpha}(0,T;X)$.
\begin{theorem}\label{thm:SP-Holder-ctu}
Assume \ref{hpths:ATplus} and  \ref{hpths:interpol-spce-wk}. Let $p\in (1, \infty]$, $\theta \in [0,\eta_-)$, $\alpha > \theta$ and let $\delta,\lambda >0 $ such that $\delta+\lambda < \min\{\alpha - \theta, \eta_+\}$. Let $f \in L^0(\OO;W^{\alpha,p}(0,T;\tilde{E}_{-\theta}))$.
The following assertions hold:
\begin{enumerate}
  \item The stochastic process $\zeta$ defined by
\begin{align}
  \zeta(t) := \int_0^t S(t,\sigma)A(\sigma)(f(t)-f(\sigma))\;d\sigma
\end{align}
belongs to $L^0(\OO;W^{\lambda,p}(0,T;\tilde{E}_{\delta}))$ and
\[\|\zeta\|_{W^{\lambda,p} (0,T;\tilde{E}_{\delta})} \leq C \|f\|_{W^{\alpha,p}(0,T;\tilde{E}_{-\theta})} \ \ \ a.s.,\]
where $C$ depends on $\omega$.
  \item If $\alpha>1/p$ and the continuous version of $f$ satisfies $f(0) = 0$, then $\tilde{\zeta} := S(t,0)f(t)$ belongs to $L^0(\OO;W^{\lambda,p}(0,T;\tilde{E}_{\delta}))$ and
  \[\|\tilde{\zeta}\|_{W^{\lambda,p} (0,T;\tilde{E}_{\delta})} \leq C \|f\|_{W^{\alpha,p}(0,T;\tilde{E}_{-\theta})} \ \ \ a.s.,\]
where $C$ depends on $\omega$.
\end{enumerate}
\end{theorem}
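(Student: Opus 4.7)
My approach is direct estimation using Remark~\ref{rmrk:evol-ineqs} and Lemma~\ref{lemma:interpol}, together with the representation of the Sobolev seminorm via the kernel $K_\alpha f(t,\sigma) := (t-\sigma)^{-\alpha-1/p}(f(t)-f(\sigma))$, whose $L^p$-norm on $\{\sigma\leq t\}\subset [0,T]^2$ equals $[f]_{W^{\alpha,p}}$ up to a multiplicative constant. The $L^p(0,T;\tilde E_\delta)$-bounds are routine: for $\zeta$ one applies Remark~\ref{rmrk:evol-ineqs} and H\"older's inequality in $\sigma$ against the kernel $(t-\sigma)^{-1-\delta-\theta-\e}$; for $\tilde\zeta$ one uses \eqref{eq:rmrk-ineqs-ineq2} combined with the Sobolev embedding $W^{\alpha,p}\hookrightarrow C^{\alpha-1/p}$ (valid since $\alpha>1/p$) and $f(0)=0$, which yields $\|f(t)\|_{\tilde E_{-\theta}}\leq Ct^{\alpha-1/p}\|f\|_{W^{\alpha,p}}$.

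For Part (1) I would split $\zeta(t)-\zeta(s)=I_1+I_2+I_3$ with $I_1 = \int_s^t S(t,\sigma)A(\sigma)(f(t)-f(\sigma))\,d\sigma$, $I_2 = \int_0^s S(t,\sigma)A(\sigma)(f(t)-f(s))\,d\sigma$ and $I_3 = \int_0^s [S(t,\sigma)-S(s,\sigma)]A(\sigma)(f(s)-f(\sigma))\,d\sigma$. For $I_1$ and $I_2$ I would apply Remark~\ref{rmrk:evol-ineqs}, substitute $\|f(t)-f(\cdot)\|_{\tilde E_{-\theta}}$ by the $K_\alpha f$ representation, and use H\"older, Minkowski and Young, with integrability of the resulting kernels secured by $\delta+\lambda+\theta<\alpha$. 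For $I_3$ I would use the factorization $[S(t,\sigma)-S(s,\sigma)]A(\sigma) = (S(t,s)-I)S(s,\sigma)A(\sigma)$; a midpoint splitting of $S(s,\sigma)$ together with \eqref{thrm-evolsys-frct-ineq1}, \eqref{thrm-evolsys-frct-ineq4} and \ref{hpths:interpol-spce-wk}(iii) controls $\|S(s,\sigma)A(\sigma)(f(s)-f(\sigma))\|_{E_{\delta+\lambda+\e,1}^s}$, after which Lemma~\ref{lemma:interpol} provides the H\"older factor $(t-s)^{\lambda+\e}$, and the double integral is again controlled via the $K_\alpha f$ representation.

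For Part (2) I would write $\tilde\zeta(t)-\tilde\zeta(s)=J_1+J_2$ with $J_1 = S(t,0)(f(t)-f(s))$ and $J_2 = (S(t,s)-I)\tilde\zeta(s)$, using $S(t,0)-S(s,0) = (S(t,s)-I)S(s,0)$. For $J_1$, \eqref{eq:rmrk-ineqs-ineq2} gives $\|J_1\|_{\tilde E_\delta}\leq Ct^{-\delta-\theta-\e}\|f(t)-f(s)\|_{\tilde E_{-\theta}}$; substituting $\|f(t)-f(s)\|^p = (t-s)^{\alpha p+1}\|K_\alpha f(t,s)\|^p$ into the Sobolev double integral produces the factor $(t-s)^{(\alpha-\lambda)p}$, which since $\alpha>\lambda$ I would bound by $t^{(\alpha-\lambda)p}$; the combined weight $t^{(\alpha-\lambda-\delta-\theta-\e)p}$ is bounded on $[0,T]$ under the gap condition $\delta+\lambda+\theta<\alpha$, and the remaining double integral is dominated by $[f]_{W^{\alpha,p}}^p$. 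For $J_2$, Lemma~\ref{lemma:interpol} yields $\|J_2\|_{\tilde E_\delta}\leq C(t-s)^{\lambda+\e}\|\tilde\zeta(s)\|_{E_{\delta+\lambda+\e,1}^s}$, and splitting $S(s,0)=S(s,s/2)S(s/2,0)$ together with \eqref{thrm-evolsys-frct-ineq1}, \eqref{thrm-evolsys-frct-ineq42} and \ref{hpths:interpol-spce-wk}(iii) controls the latter by $Cs^{\alpha-\delta-\lambda-\theta-1/p-O(\e)}\|f\|_{W^{\alpha,p}}$, so that the seminorm integration converges by the gap condition together with $\alpha>1/p$.

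The most delicate point is the bound for $J_1$: a naive pointwise estimate yields the singular prefactor $t^{-(\delta+\theta+\e)p}$, which fails to be integrable once $(\delta+\theta)p\geq 1$. The resolution is to retain the full power $(t-s)^{\alpha p+1}$ coming from the Sobolev kernel and exploit the elementary inequality $(t-s)^{(\alpha-\lambda)p}\leq t^{(\alpha-\lambda)p}$, which trades the unused decay in $(t-s)$ for the decay in $t$ needed to cancel the singularity, and does so precisely under the hypothesis $\delta+\lambda+\theta<\alpha$.
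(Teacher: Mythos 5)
Your proposal is correct, and for part (1) it is essentially the paper's own proof: the decomposition $I_1+I_2+I_3$ is exactly the paper's $T_2$, $T_{1a}$, $T_{1b}$, and the treatment of $I_3$ via $(S(t,s)-I)S(s,\sigma)A(\sigma)$, midpoint splitting, \eqref{thrm-evolsys-frct-ineq1}, \eqref{thrm-evolsys-frct-ineq4} and Lemma~\ref{lemma:interpol} is the same; the only cosmetic difference is that for the $\int_s^t$ piece you use H\"older/Fubini against the kernel $(t-\sigma)^{-\alpha-1/p}(f(t)-f(\sigma))$ where the paper invokes the Hardy--Young inequality, which amounts to the same estimate. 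In part (2) you genuinely deviate in one ingredient: the paper handles the terms containing $\|f(s)\|_{\tilde{E}_{-\theta}}$ alone (the $L^p$-norm of $\tilde\zeta$ and the $(S(t,0)-S(s,0))f(s)$ term) by extending $f$ to $\R$ via \cite[Theorem 5.4]{fracsob} and applying the fractional Hardy inequality of \cite{KMP} to $\int_0^T s^{-(\beta+\lambda+2\e)p}\|f(s)\|^p\,ds$, whereas you use the Sobolev embedding \eqref{eq:fractionalSobolevembReprForm} together with $f(0)=0$ to get the pointwise decay $\|f(s)\|_{\tilde{E}_{-\theta}}\leq Cs^{\alpha-1/p}\|f\|_{W^{\alpha,p}}$ and then integrate directly; this is a mild simplification (no extension operator, no Hardy inequality), uses the hypotheses $\alpha>1/p$ and $f(0)=0$ in the same places, and closes under the same gap condition. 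Finally, the "delicate point" you flag for $J_1$ is dispatched in the paper by the one-line bound $t^{-\beta}\leq (t-s)^{-\beta}$ followed by $[f]_{W^{\beta+\lambda,p}}\leq C\|f\|_{W^{\alpha,p}}$; your trade of $(t-s)^{(\alpha-\lambda)p}\leq t^{(\alpha-\lambda)p}$ is an equivalent rearrangement of the same exponent bookkeeping, so nothing is gained or lost there.
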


Note that in (2) the continuous version of $f$ exists by \eqref{eq:fractionalSobolevembReprForm}.

\begin{proof}
The proofs in the case $p=\infty$ are much simpler and we focus on the case $p\in (1, \infty)$. Let $\beta = \delta +\theta +\varepsilon$.

(1). Let $\e > 0$ be so small that $\beta + \lambda +\e< \alpha $.
Write $\Delta_{ts}f = f(t)-f(s)$. First we estimate the $L^p(0,T;\tilde{E}_{\delta})$-norm  of $\zeta$. Note that by Remark \ref{rmrk:evol-ineqs},
\[\|S(t,\sigma)A(\sigma)\Delta_{t\sigma}f \|_{\tilde{E}_{\delta}}\leq C(t-\sigma)^{-1-\beta}\|\Delta_{t\sigma}f\|_{\tilde{E}_{-\theta}}.\]
Therefore, by Holder's inequality applied with measure $(t-\sigma)^{-1+\varepsilon}\,d\sigma$ we find that
\begin{align*}
\|\zeta\|_{L^p(0,T;\tilde{E}_{\delta})}^p &
\leq C\int_0^T \Big( \int_0^t \frac{\|\Delta_{t\sigma}f\|_{\tilde{E}_{-\theta}}}{(t-\sigma)^{1+\beta}} \,d\sigma\Big)^{p} \, dt
\\ & \leq C\int_0^T \int_0^t \frac{\|\Delta_{t\sigma}f\|_{\tilde{E}_{-\theta}}^p}{(t-\sigma)^{1+(\beta+\varepsilon - \frac{\varepsilon}{p})p}} \,d\sigma \, dt
\\ & \leq C \|f\|_{W^{\alpha,p}(0,T;\tilde{E}_{-\theta})}.
\end{align*}

Observe that
\begin{align*}
\|\zeta(t) - \zeta(s)\|_{\tilde{E}_\delta} & \leq \int_0^s \| (S(t,\sigma)A(\sigma)\Delta_{t \sigma}f-S(s,\sigma)A(\sigma)\Delta_{s \sigma}f)\|_{\tilde{E}_{\delta}}\;d\sigma \\ & \qquad + \int_s^t \|S(t,\sigma)A(\sigma)\Delta_{t \sigma}f\|_{\tilde{E}_{\delta}}\;d\sigma = T_1(s,t) + T_2(s,t).
\end{align*}

We estimate the $[\cdot]_{W^{\lambda,p}}$-seminorm of each of the terms separately. For $T_2$ note that by Remark \ref{rmrk:evol-ineqs},
\begin{align*}
\|S(t,\sigma)A(\sigma)\Delta_{t \sigma}f\|_{\tilde{E}_\delta} &\leq C(t-\sigma)^{-\beta - 1}\|\Delta_{t \sigma}f\|_{\tilde{E}_{-\theta}}=:g(\sigma,t).
\end{align*}
Therefore, it follows from the Hardy--Young inequality (see \cite[p. 245-246]{Hardy}) that
\begin{align*}
\int_0^t (t-s)^{-\lambda p  -1}  \Big( & \int_s^t \|S(t,\sigma)A(\sigma)\Delta_{t \sigma}f\|_{\tilde{E}_{\delta}}\;d\sigma \Big)^p\,ds
\\ & \leq \int_0^t (t-s)^{-\lambda p  -1} \Big( \int_s^t  g(\sigma,t) \;d\sigma\Big)^{p} \, ds
\\ & = \int_0^t (t-s)^{-\lambda p  -1} \Big( \int_0^{t-s}  g(t-\tau,t) \;d\tau\Big)^{p} \, ds
\\ & = \int_0^t r^{-\lambda p -1}  \Big( \int_0^{r}  g(t-\tau,t) \;d\tau\Big)^{p} \, dr
\\ & \leq C\int_0^t r^{p-\lambda p  -1} g(t-r,t)^p \, dr
\\ & = C\int_0^t \frac{g(s,t)^p}{(t-s)^{-p+\lambda p +1} } \, ds.
\end{align*}
Integrating with respect to $t\in (0,T)$ and using the definition of $g$ we find that
\begin{align*}
\int_0^T \int_0^t \frac{T_{2}(s,t)^p}{(t-s)^{\lambda p +1}} \, ds \, dt  & \leq C \int_0^T \int_0^t  \frac{(t-s)^{-(\beta+ 1)p} \|\Delta_{t s}f\|_{\tilde{E}_{-\theta}}^p}{(t-s)^{-p+\lambda p +1} } \, ds \, dt
\\ & \leq C \int_0^T \int_0^t  \frac{\|\Delta_{t s}f\|_{\tilde{E}_{-\theta}}^p}{(t-s)^{\alpha p +1} } \leq C\|f\|_{W^{\alpha,p}(0,T;\tilde{E}_{-\theta})}^p.
\end{align*}

For $T_1$, we can write
\begin{align}\label{eqn:regul-intsplit3}
\begin{split}
T_1(s,t) & \leq \int_0^s\|S(t,\sigma)A(\sigma)\Delta_{ts}f\|_{\tilde{E}_{\delta}}\;d\sigma
\\ & \qquad + \int_0^s \|(S(t,\sigma)-S(s,\sigma))A(\sigma)\Delta_{s \sigma}f\|_{\tilde{E}_{\delta}}\;d\sigma
\\ & =  T_{1a}(s,t) + T_{1b}(s,t).
\end{split}
\end{align}
For $T_{1a}$, by Remark \ref{rmrk:evol-ineqs} we have
\begin{align*}
T_{1a}(s,t)\leq C \int_0^s (t-\sigma)^{-1-\beta} \|\Delta_{t s}f\|_{\tilde{E}_{-\theta}} \;d\sigma
\leq C(t-s)^{-\beta} \|\Delta_{t s}f\|_{\tilde{E}_{-\theta}}.
\end{align*}
It follows that
\begin{align*}
\int_0^T \int_0^t \frac{T_{1a}(s,t)^p}{(t-s)^{\lambda p +1}} \, ds \, dt & \leq C \int_0^T \int_0^t
\frac{\|\Delta_{t s}f\|_{\tilde{E}_{-\theta}}^p}{(t-s)^{(\beta+ \lambda)p +1} }\, ds \, dt
\\ & \leq C \|f\|_{W^{\alpha,p}(0,T;\tilde{E}_{-\theta})}^p.
\end{align*}

To estimate $T_{1b}$ let $\gamma = \alpha-\varepsilon-\beta$. By Lemma \ref{lemma:interpol}, Lemma \ref{lem:-evolsys-frct-ineq} and Lemma \ref{lem:diff-prop-evolsys},
\begin{align*}
\|(S&(t,\sigma) -S(s,\sigma))A(\sigma)\Delta_{s \sigma}f\|_{\tilde{E}_{\delta}} \leq \|(S(t,s)-I)S(s,\sigma)A(\sigma) \Delta_{s \sigma}f\|_{\tilde{E}_{\delta}}
\\ & \leq C(t-s)^{\gamma} \|S(s,\sigma)(-A(\sigma))^{1+\theta+\varepsilon}(-A(\sigma))^{-\theta-\e}\Delta_{s \sigma}f\|_{E_{\delta + \gamma,1}^s}
\\ & \leq  C(t-s)^{\gamma} \|S(s,\tau)\|_{\calL(E_0, E_{\delta + \gamma,1}^s)} \|S(\tau, \sigma)A(\sigma)^{1+\theta+\varepsilon}\|_{\calL(E_0)} \|\Delta_{s \sigma}f\|_{\tilde{E}_{-\theta}}
\\ & \leq C(t-s)^{\gamma} (s-\sigma)^{-1-\gamma-\beta} \|\Delta_{s \sigma}f\|_{\tilde{E}_{-\theta}},
\end{align*}
with $\tau = (s+\sigma)/2$. It follows that from H\"older's inequality that
\begin{align*}
T_{1b}(s,t)^p &\leq C(t-s)^{\gamma p} \Big(\int_0^s (s-\sigma)^{-1-\gamma-\beta}  \|\Delta_{s \sigma}f\|_{\tilde{E}_{-\theta}} \, d\sigma\Big)^p
\\ & \leq C (t-s)^{\gamma p} h(s)^p \int_0^s \frac{\|\Delta_{s \sigma}f\|_{\tilde{E}_{-\theta}}^p}{(s-\sigma)^{\alpha p + 1}} \, d\sigma,
\end{align*}
where by the choice of $\gamma$, the function $h(s)$ satisfies
\begin{align*}
h(s) & = \Big(\int_0^s \Big[(s-\sigma)^{-1-\gamma-\beta+\alpha +\frac1p} \Big]^{p'} \, d\sigma\Big)^{1/p'}\leq C.
\end{align*}
Using Fubini's theorem and $\gamma>\lambda$ we can conclude that
\begin{align*}
\int_0^T \int_s^T \frac{T_{1b}(s,t)^p}{(t-s)^{\lambda p +1}} \, dt \, ds
& \leq C\int_0^T \int_0^s \int_s^T (t-s)^{-\lambda p - 1+\gamma p} \, dt \frac{\|\Delta_{s \sigma}f\|_{\tilde{E}_{-\theta}}^p}{(s-\sigma)^{\alpha p + 1}} \, d\sigma \, ds
\\
& \leq C \|f\|_{W^{\alpha,p}(0,T;\tilde{E}_{-\theta})}^p.
\end{align*}
and this finishes the proof of (1).

To prove (2), let $\e > 0$ be such that $\frac1p<\beta + \lambda +2\e< \alpha$. To estimate $[\tilde{\zeta}]_{W^{\lambda,p}(0,T;\tilde{E}_{\delta})}$ we write
\[ \|\tilde{\zeta}(t) - \tilde{\zeta}(s)\|_{\tilde{E}_\delta} \leq \|S(t,0)\Delta_{t s}f\|_{\tilde{E}_\delta} + \|S(t,0)-S(s,0)f(s)\|_{\tilde{E}_\delta}.\]
By Remark \ref{rmrk:evol-ineqs},
\begin{align*}
\|S(t,0)\Delta_{t s}f\|_{\tilde{E}_\delta} &\leq Ct^{-\beta} \|\Delta_{t s}f\|_{\tilde{E}_{-\theta}}
\leq C(t-s)^{-\beta} \|\Delta_{t s}f\|_{\tilde{E}_{-\theta}}.
\end{align*}
It follows that
\begin{align*}
\int_0^T \int_0^t  \frac{\|S(t,0)\Delta_{t s}f\|_{\tilde{E}_\delta}^p}{(t-s)^{\lambda p + 1}} \, ds \, dt \leq C\|f\|_{W^{\beta+\lambda}(0,T;\tilde{E}_{-\theta})} \leq C\|f\|_{W^{\alpha}(0,T;\tilde{E}_{-\theta})}.
\end{align*}

By \cite[Theorem 5.4]{fracsob} we can find an extension $F\in W^{\alpha,p}(\R;\tilde{E}_{-\theta})$ of $f$ such that
\[\|F\|_{W^{\alpha,p}(\R;\tilde{E}_{-\theta})} \leq C \|f\|_{W^{\alpha,p}(0,T;\tilde{E}_{-\theta})}.\]
Moreover, multiplying $F$ by a suitable smooth cut-off function we can assume that additionally $F = 0$ on $[T+1,\infty)$.

We have by Lemma \ref{lemma:interpol} and \eqref{eq:Sfongelijkheid},
\begin{align*}
\|(S(t,0)-S(s,0))f(s)\|_{\tilde{E}_\delta} &= \|(S(t,s) - S(s,s))S(s,0)f(s)\|_{\tilde{E}_\delta}
\\ & \leq C(t-s)^{\lambda+\varepsilon}\|S(s,0)f(s)\|_{E^s_{\delta +\lambda+\varepsilon,1}} \\ & \leq C(t-s)^{\lambda+\varepsilon} \frac{\|f(s)\|_{\tilde{E}_{-\theta}}}{s^{\beta+\lambda +2\varepsilon}}.
\end{align*}
Therefore, we find
\begin{align*}
\int_0^T \int_s^T  &\frac{\|S(t,0)-S(s,0)f(s)\|_{\tilde{E}_\delta}^p}{(t-s)^{\lambda p + 1}} \, dt \, ds  \leq C\int_0^T \int_s^T (t-s)^{-1+\varepsilon p} \, dt    \frac{\|f(s)\|_{\tilde{E}_{-\theta}}^p}{s^{(\beta+\lambda +2\varepsilon)p}} \, ds
\\ & \leq C\int_0^T \frac{\|f(s)\|_{\tilde{E}_{-\theta}}^p}{s^{(\beta+\lambda +2\varepsilon)p}}  \, ds
\leq C\int_{\R_+} \frac{\|F(s)\|_{\tilde{E}_{-\theta}}^p}{s^{(\beta+\lambda +2\varepsilon)p}}  \, ds.
\end{align*}
Applying the fractional Hardy inequality (see \cite[Theorem 2]{KMP}; here we use $\frac1p<\beta + \lambda +2\e$) and elementary estimates we find that the latter is less or equal than
\begin{align*}
C\|F\|_{W^{\beta+\lambda+2\varepsilon,p}(\R_+;\tilde{E}_{-\theta})}^p
& \leq C\|F\|_{W^{\beta+\lambda+2\varepsilon,p}(\R;\tilde{E}_{-\theta})}^p
\\ & \leq C\|f\|_{W^{\beta+\lambda+2\varepsilon,p}(0,T;\tilde{E}_{-\theta})}^p
\leq C\|f\|_{W^{\alpha,p}(0,T;\tilde{E}_{-\theta})}^p.
\end{align*}

Finally we estimate the $L^p$-norm of $\tilde{\zeta}$. From Remark \ref{rmrk:evol-ineqs} we see that
\begin{align*}
\|\tilde{\zeta}(t)\|_{\tilde{E}_{\delta}}\leq C t^{-\beta} \|f(t)\|_{\tilde{E}_{-\theta}}.
\end{align*}
Taking $L^p$-norms and applying the fractional Hardy inequality as before we obtain
\begin{align*}
\|\tilde{\zeta}\|_{L^p(0,T;\tilde{E}_{\delta})}^p\leq C \int_0^T t^{-\beta p} \|f(t)\|_{\tilde{E}_{-\theta}}^p \, dt \leq C\|f\|_{W^{\alpha,p}(0,T;\tilde{E}_{-\theta})}^p.
\end{align*}
This completes the proof of (2).
\end{proof}

\section{Pathwise mild solutions\label{sec:repr}}

In this section we will introduce a new solution formula for equations of the form:
\begin{equation}\label{eq:SEEsimple}
\left\{
    \begin{array}{ll}
        dU(t) &= A(t)U(t)\;dt + G\;dW(t), \\
        U(0) &= 0.
    \end{array}
\right.
\end{equation}
Here $W$ is an $H$-cylindrical Brownian motion and $G:[0,T]\times\OO\to \calL(H,E_0)$ is adapted and strongly measurable. The initial condition is taken zero for notational convenience. Furthermore, $(A(t))_{t\in[0,T]}$ satisfies the (\AT)-conditions as introduced before. At first sight one would expect that the solution to \eqref{eq:SEEsimple} is given by
\begin{equation}\label{eq:mild}
U(t) = \int_0^t S(t,s) G(s)\, d W(s).
\end{equation}
However, in general $s\mapsto S(t,s)$ is only $\F_t$-measurable (see Proposition \ref{prop:evolsys-adapted} and Example \ref{ex:noadapted}). Therefore, the stochastic integral does not exist in the It\^o sense. We will give another representation formula which provides an alternative to mild solutions to \eqref{eq:SEEsimple}. We say $U$ is a {\em pathwise mild solution} to \eqref{eq:SEEsimple} if almost surely for almost all $t\in [0,T]$,
\begin{align}\label{eq:reprform}
U(t) = - \int_0^t S(t,s) A(s) I(\one_{(s,t)}G) \, ds + S(t,0) I(\one_{(0,t)} G),
\end{align}
where $I(\one_{(s,t)}G) = \int_s^t G \, d W$. If  $p\in (2, \infty)$ is large enough, then \eqref{eq:reprform} is defined in a pointwise sense and pathwise continuous (see Theorem \ref{thm:SP-Holder-ctu}).

A better name for the solution \eqref{eq:reprform} would be a `pathwise singular representation of the mild solution'. Pathwise appears in the name, since the representation formula \eqref{eq:reprform} is defined in a pathwise sense, but we prefer to use the abbreviated version.

Clearly, there is no adaptedness issue in \eqref{eq:reprform} as the evolution family is only used in integration with respect to the Lebesgue measure. Moreover, the solution $U$ will be adapted and measurable. A difficulty in the representation formula \eqref{eq:reprform} is that usually the kernel $S(t,s) A(s)$ has a singularity of order $(t-s)^{-1}$, but we will see below that $I(\one_{(s,t)}G)$ is small enough for $s$ close to $t$ to make the integral in \eqref{eq:reprform} convergent. Moreover, we will show that the usual parabolic regularity results hold.

In Section \ref{subsec:stochint} we first repeat some basic results from stochastic integration theory. In Section \ref{subsec:motStoch} we show that in the bounded case the pathwise mild solution \eqref{eq:reprform} yields the right solution. The space-time regularity of $U$ defined by \eqref{eq:reprform} is studied in Section \ref{subsec:regStoch}. In Section \ref{subsec:weaksol} we show that a pathwise mild solution is a variational and weak solution and vice versa. Finally in Section \ref{subsec:mild} we prove the equivalence with a forward mild solution \eqref{eq:mild} (based on forward integration). The latter will not be used in the rest of the paper, but provides an interesting connection.

In this section we assume the hypotheses \ref{asmp:A-mble-adptd}-\ref{hpths:interpol-spce-wk}
and we impose a further condition on the spaces in \ref{hpths:interpol-spce-wk}.
\let\ALTERWERTA\theenumi
\let\ALTERWERTB\labelenumi
\def\theenumi{(H4)}
\def\labelenumi{(H4)}
\begin{enumerate}
  \item\label{hpths:interpol-spce-wkextra}
  The spaces $(\tilde{E}_\eta)_{\eta\in(-\eta_-,\eta_+]}$ from \ref{hpths:interpol-spce-wk} all have {\umd} and type $2$.
\end{enumerate}
\let\theenumi\ALTERWERTA
\let\labelenumi\ALTERWERTB
Details on {\umd} and type can be found in \cite{Bu3} and \cite{DJT}, respectively.

\subsection{Stochastic integration\label{subsec:stochint}}
Below we briefly repeat a part of the stochastic integration theory in {\umd} spaces $E$ with type $2$. For details on stochastic integration in such spaces or more generally martingale type $2$ spaces, we refer to \cite{Brzezniak, NVWsurvey, Ondrejat04, Seidler} and the references therein.

For spaces $E$ with {\umd} one can develop an analogue of It\^o's theory of the stochastic integral (see \cite{NVW1}). To be more precise: one can precisely characterize which adapted and strongly measurable $G:[0,T]\times\OO\to \calL(H,E)$ are stochastically integrable. Moreover, two-sided estimates can be obtained. If additionally the space $E$ has type $2$, then there exists an easy subspace of stochastically integrable processes.
Indeed, for every adapted and strongly measurable $G \in L^0(\OO;L^2(0,T;\g(H,E)))$, the stochastic integral process $\Big(\int_0^t G \, d W\Big)_{t\in [0,T]}$ exists and is pathwise continuous. For convenience we write
\[I(G) = \int_0^T G \, d W \ \ \text{and} \ \ J(G)(t) = I(\one_{[0,t]}G) \ \ t\in [0,T].\]
Moreover, for all $p\in(0,\infty)$, there exists a constant $C$ independent of $G$ such that the following one-sided estimate holds:
\begin{align}\label{eq:JGest}
\|J(G)\|_{L^p(\OO;C([0,T];E))} \leq C \|G\|_{L^p(\OO;L^2(0,T;\g(H,E)))}.
\end{align}
Here, $\g(H,E)$ is the space of $\gamma$-radonifying operators $R:H\to E$. For details on $\gamma$-radonifying operators we refer to \cite{Neerven-Radon,NVWsurvey}.

One can deduce Sobolev regularity of the integral process (see \cite{PV-forward}).
\begin{proposition}\label{prop:type2prop}
Assume $E$ has type $2$, let $p \in [2, \infty)$ and $0<\alpha<\frac12$.
If $G$ is an adapted process in $L^0(\OO;L^p(0,T;\g(H,E)))$, then $J(G)\in L^0(\OO;W^{\alpha,p}(0,T; E))$. Furthermore,
there exists a constant $C_T$ such that independent of $G$ such that
\[\|J(G)\|_{L^p(\OO;W^{\alpha,p}(0,T; E))} \leq C_T \|G\|_{L^p(\OO;L^p(0,T;\g(H,E)))},\]
where $C_T\to 0$ as $T\downarrow 0$. Moreover, if $(G_n)_{n=1}^\infty$ is a sequence of adapted processes in $L^0(\OO;L^p(0,T;\g(H,E)))$, then
\begin{align*}
G_n\to G \ &\text{in $L^0(\OO;L^p(0,T;\g(H,E)))$} \\&\qquad\qquad \Longrightarrow \
J(G_n)\to J(G)\ \text{in $L^0(\OO;W^{\alpha,p}(0,T; E))$}.
\end{align*}
\end{proposition}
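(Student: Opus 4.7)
\smallskip

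\textbf{Plan of proof for Proposition \ref{prop:type2prop}.}

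The strategy is to reduce to the Burkholder-type one-sided bound that is already available in type~$2$ spaces, namely
\[
\EE\Big\|\int_s^t G\,dW\Big\|_E^p \leq C_p\,\EE\Big(\int_s^t \|G(r)\|_{\g(H,E)}^2\,dr\Big)^{p/2},\qquad 0\le s\le t\le T,
\]
and then plug this into the definition of the Sobolev seminorm. Since $p\geq 2$, H\"older's inequality gives
\[
\Big(\int_s^t \|G(r)\|_{\g}^2\,dr\Big)^{p/2} \leq (t-s)^{p/2-1}\int_s^t \|G(r)\|_{\g}^p\,dr,
\]
so that $\EE\|J(G)(t)-J(G)(s)\|_E^p \leq C_p(t-s)^{p/2-1}\,\EE\int_s^t \|G(r)\|_{\g}^p\,dr$.

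Inserting this into $\EE[J(G)]_{W^{\alpha,p}(0,T;E)}^p$ and using Fubini (first in $\OO$, then over $\{0\le s\le r\le t\le T\}$), the deterministic kernel to bound is
\[
K(r) = \int_0^r\!\!\int_r^T (t-s)^{p/2-2-\alpha p}\,dt\,ds.
\]
Because $\alpha<1/2$ one has $p/2-\alpha p>0$, so both inner and outer integrations produce at most a mild singularity that is summable, and an elementary computation (splitting according to the sign of $p/2-1-\alpha p$, substituting $u=t-s$, and computing the resulting Beta-type integral) yields
\[
K(r) \leq C\,T^{p/2-\alpha p}\qquad\text{uniformly in }r\in[0,T].
\]
Hence $\EE[J(G)]_{W^{\alpha,p}(0,T;E)}^p \leq C\,T^{p(1/2-\alpha)}\|G\|_{L^p(\OO;L^p(0,T;\g(H,E)))}^p$. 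For the $L^p$-in-time part of the Sobolev norm I would combine \eqref{eq:JGest} with H\"older in time (valid since $p\geq 2$) to obtain $\|J(G)\|_{L^p(\OO;L^p(0,T;E))} \leq C\,T^{1/2}\|G\|_{L^p(\OO;L^p(0,T;\g(H,E)))}$. Together these two bounds give the announced inequality with
\[
C_T \leq C\big(T^{1/2}+T^{1/2-\alpha}\big),
\]
which vanishes as $T\downarrow 0$.

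For the final continuity statement, convergence is in $L^0(\OO;\cdot)$ rather than $L^p(\OO;\cdot)$, so the $L^p$-estimate cannot be applied directly. My plan is the standard localization argument: for $R>0$ introduce the stopping time
\[
\tau_R = \inf\Big\{t\in[0,T]:\int_0^t \|G_n(r)-G(r)\|_{\g(H,E)}^p\,dr > R\Big\}\wedge T,
\]
which is a stopping time thanks to the adaptedness of $G_n-G$. Applying the $L^p$-bound already proved to $(G_n-G)\one_{[0,\tau_R]}$ gives an $L^p(\OO)$-estimate for $J(G_n-G)$ on $[0,\tau_R]$, while on $\{\tau_R<T\}$ one uses the hypothesis $G_n\to G$ in $L^0(\OO;L^p(0,T;\g(H,E)))$ to make $\PP(\tau_R<T)$ small. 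Sending first $n\to\infty$ and then $R\to\infty$ yields $J(G_n)\to J(G)$ in $L^0(\OO;W^{\alpha,p}(0,T;E))$ by the usual two-parameter $\e/\delta$ argument.

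The main obstacle is not the stochastic inequality (which is by hypothesis available in UMD/type~$2$ spaces) but rather the bookkeeping for $K(r)$: the exponent $p/2-2-\alpha p$ can be either above or below $-1$ depending on $p$ and $\alpha$, so one has to check both regimes and verify that the combined $s$- and $t$-integrations nonetheless give the clean $T^{p/2-\alpha p}$ scaling; this scaling is what produces the decisive $C_T\to 0$ behaviour exploited later in the fixed-point arguments of Section~\ref{sec:semil}.
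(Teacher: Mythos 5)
Your argument is correct in substance. Note first that the paper itself contains no proof of this proposition: it is quoted from the companion paper \cite{PV-forward}, so there is no in-text argument to compare against; what you have written is the natural self-contained proof. The chain of estimates is right: the one-sided type-$2$ bound $\E\|I(\one_{(s,t)}G)\|_E^p\leq C_p\,\E\bigl(\int_s^t\|G(r)\|_{\g(H,E)}^2\,dr\bigr)^{p/2}$ follows from \eqref{eq:JGest} applied to the adapted process $\one_{(s,t)}G$; H\"older with $p\geq 2$ converts the inner $L^2$-norm into an $L^p$-norm at the cost of $(t-s)^{p/2-1}$; Fubini over $\{0\leq s\leq r\leq t\leq T\}$ isolates the kernel $K(r)=\int_0^r\int_r^T(t-s)^{p/2-2-\alpha p}\,dt\,ds$; and since $\beta:=p(1/2-\alpha)>0$ one checks the three regimes $\beta>1$, $\beta<1$, $\beta=1$ (the last producing $r(1+\log(T/r))\leq T$) to get $K(r)\leq C\,T^{\beta}$ uniformly in $r$. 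Combined with the $L^p(0,T;E)$ bound of order $T^{1/2}$ from \eqref{eq:JGest} and H\"older in time, this gives $C_T\leq C(T^{1/2}+T^{1/2-\alpha})\to 0$. Two small points to tidy. First, your stopping time depends on $n$ and should be written $\tau_R^n$; moreover, for a \emph{fixed} $R$ both error terms already vanish as $n\to\infty$, since $\int_0^{\tau_R^n}\|G_n-G\|_{\g(H,E)}^p\,dr\leq\min\bigl(R,\int_0^T\|G_n-G\|_{\g(H,E)}^p\,dr\bigr)$ tends to $0$ in $L^1(\OO)$ by dominated convergence, so the final limit $R\to\infty$ is unnecessary. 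Second, the membership claim $J(G)\in L^0(\OO;W^{\alpha,p}(0,T;E))$ for $G$ merely in $L^0(\OO;L^p(0,T;\g(H,E)))$ also requires this localization (apply the $L^p$-estimate to $G\one_{[0,\tau_R]}$ and let $R\to\infty$); you invoke the device only for the convergence statement, but it should be stated for the membership claim as well.
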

By \eqref{eq:fractionalSobolevembReprForm} one can also derive H\"older regularity and convergence in the H\"older norm in the case $\alpha>1/p$.

\subsection{Motivation in the bounded case\label{subsec:motStoch}}
Below we will show that in the special case of bounded $A$, the pathwise mild solution $U$ defined by \eqref{eq:reprform}, is a solution to \eqref{eq:SEEsimple}.
\begin{proposition}\label{prop:equivstrongrepr}
Assume $A$ is an adapted process in $L^0(\OO;C([0,T];\calL(E_0))$. If $G\in L^0(\OO;L^2(0,T;\g(H,E_0)))$ is adapted, then $U$ defined by \eqref{eq:reprform} is adapted and satisfies
\[U(t) = \int_0^t A(s) U(s) \, ds + \int_0^t G(s)\, d W(s).\]
\end{proposition}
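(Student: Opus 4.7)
I would start by setting $M(s) := I(\one_{(0,s)} G) = \int_0^s G \, dW$, a pathwise continuous $E_0$-valued martingale, so that $I(\one_{(s,t)} G) = M(t) - M(s)$. Adaptedness of $U$ follows immediately: for each fixed $t$, $\omega \mapsto S(t,s,\omega)$ is $\F_t$-measurable by Proposition \ref{prop:evolsys-adapted}, $A(s)$ is $\F_s \subset \F_t$-measurable by \ref{asmp:A-mble-adptd}, and $M(t) - M(s)$ is $\F_t$-measurable, so the integrand in \eqref{eq:reprform} is jointly measurable in $(s,\omega)$ and $\F_t$-measurable pointwise in $s$.

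The key simplification in the bounded-$A$ case is that Lemma \ref{lem:diff-prop-evolsys} (alternatively, classical ODE theory, since by hypothesis $A \in C([0,T];\calL(E_0))$ pathwise) yields $\partial_s S(t,s) = -S(t,s) A(s)$ on all of $E_0$. Integrating over $s \in (0,t)$ gives
\[\int_0^t S(t,s) A(s) \, ds = I - S(t,0).\]
Substituting this identity into \eqref{eq:reprform} and rearranging, the boundary terms $S(t,0) M(t)$ and $(I - S(t,0))M(t)$ combine to produce
\[U(t) = M(t) + \int_0^t S(t,s) A(s) M(s) \, ds.\]

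Next I would compute $\int_0^t A(r) U(r) \, dr$ using this representation. Since $A$, $S$, and $M$ are pathwise bounded on $[0,T]$, every integral in sight is an $E_0$-valued Bochner integral and the classical Fubini theorem applies, yielding
\[\int_0^t A(r) U(r) \, dr = \int_0^t A(r) M(r) \, dr + \int_0^t \Big(\int_\sigma^t A(r) S(r,\sigma) \, dr\Big) A(\sigma) M(\sigma) \, d\sigma.\]
By Theorem \ref{thrm:exist-evol}(5), $\partial_r S(r,\sigma) = A(r) S(r,\sigma)$, so $\int_\sigma^t A(r) S(r,\sigma) \, dr = S(t,\sigma) - I$. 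The second term telescopes and cancels against the first, leaving $\int_0^t S(t,\sigma) A(\sigma) M(\sigma) \, d\sigma = U(t) - M(t)$. This is exactly the desired identity $U(t) = \int_0^t A(s) U(s) \, ds + M(t)$.

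The main obstacle is simply bookkeeping: one must verify that the ordinary Fubini step above is legitimate and that $\partial_s S(t,s) = -S(t,s) A(s)$ holds on all of $E_0$. In the bounded-$A$ setting both are routine, since $S(t,s)$ is then the classical solution of a linear ODE in $\calL(E_0)$ with $A(\cdot)$ bounded and continuous, and no stochastic Fubini is required because we work entirely with Bochner integrals after $M$ has been constructed.
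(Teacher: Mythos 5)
Your argument is essentially identical to the paper's: rewrite $U(t)=M(t)+\int_0^t S(t,s)A(s)M(s)\,ds$ using $\partial_s S(t,s)=-S(t,s)A(s)$, then apply the deterministic Fubini theorem and $\partial_r S(r,\sigma)=A(r)S(r,\sigma)$ so that the double integral telescopes. Two small remarks: your displayed identity should read $\int_0^t S(t,s)A(s)\,ds = S(t,0)-I$ (your subsequent bookkeeping already uses the correct sign), and for measurability/differentiability of $S$ you should cite the classical construction for bounded generators (as the paper does, via Pazy) rather than Proposition \ref{prop:evolsys-adapted} or Lemma \ref{lem:diff-prop-evolsys}, which are proved under the (AT)-conditions that are not assumed here.
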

The above result is only included to show that \eqref{eq:reprform} leads to the ``right''solution. In the case $A$ is bounded, one can construct the solution as in the classical finite dimensional situation.

\begin{proof}
By \cite[Theorem 5.2]{Pazy}, $(A(t,\omega))_{t\in [0,T]}$ generates a unique continuous evolution family $(S(t,s,\omega))_{0\leq s\leq t\leq T}$ and pointwise in $\OO$ the following identities hold
\[\frac{\partial}{\partial t} S(t,s) = A(t) S(t,s) \ \ \text{and} \ \ \frac{\partial}{\partial s} S(t,s) = -S(t,s)A(s), \ \ 0\leq s\leq t\leq T.\]
Moreover, from the construction in \cite[Theorem 5.2]{Pazy} one readily checks that for each $0\leq s\leq t\leq T$, $\omega\mapsto S(t,s,\omega)$ is $\F_t$-measurable and thus $U$ defined by \eqref{eq:reprform} is adapted.
It follows that
\begin{align*}
U(t) &= -\int_0^t S(t,s) A(s) I(\one_{(0,t)}G) \, ds \\&\qquad\qquad + \int_0^t S(t,s) A(s) I(\one_{(0,s)}G) \, ds + S(t,0) I(\one_{(0,t)} G)
\\ & = I(\one_{(0,t)}G)  + \int_0^t S(t,s) A(s) I(\one_{(0,s)}G) \, ds.
\end{align*}
Therefore, by Fubini's theorem we obtain
\begin{align*}
\int_0^t A(r) U(r) \, dr & =  \int_0^t A(r) I(\one_{(0,r)}G) \, dr  + \int_0^t \int_0^r A(r) S(r,s) A(s) I(\one_{(0,s)}G) \, ds\, dr
\\ & = \int_0^t A(r) I(\one_{(0,r)}G) \, dr  + \int_0^t \int_s^t A(r) S(r,s) A(s) I(\one_{(0,s)}G) \, dr \, ds
\\ & = \int_0^t S(t,s) A(s) I(\one_{(0,s)}G) \, ds.
\end{align*}
Combining both identities, the result follows.
\end{proof}

\begin{remark}
In the general case that $A$ is unbounded, the integrals in the above proof might diverge and one needs to argue in a different way. However, if $G$ takes values in a suitable subspace of the domains of $A$, and under integrability assumptions in $s\in (0,T)$, one can repeat the above calculation in several situations.
\end{remark}

\subsection{Regularity\label{subsec:regStoch}}

As a consequence of the previous results we will now derive a pathwise regularity result for the pathwise mild solution $U$ given by \eqref{eq:reprform}.

\begin{theorem}\label{thm:pathregularity}
Let $p\in (2, \infty)$ and let $\theta \in [0,\min\{\eta_-,\tfrac12\})$. Let $\delta, \lambda > 0$ be such that $\delta + \lambda < \min\{\frac12 - \theta, \eta_+\}$. Suppose $G\in L^0(\OO;L^p(0,T;\g(H,\tilde{E}_{-\theta})))$ is adapted.
The process $U$ given by \eqref{eq:reprform} is adapted and is in $L^0(\OO;W^{\lambda,p} (0,T;\tilde{E}_{\delta}))$. Moreover, for every $\alpha\in (\lambda+\delta+\theta, \tfrac12)$, there is a mapping $C:\OO\to \R_+$ which only depends on $\delta, \lambda, p$ and the constants in \ref{asmp:A-mble-adptd}-\ref{hpths:interpol-spce-wkextra} such that
\[\|U\|_{W^{\lambda,p} (0,T;\tilde{E}_{\delta})} \leq C \|J(G)\|_{W^{\alpha,p} (0,T;\tilde{E}_{-\theta})}.\]
\end{theorem}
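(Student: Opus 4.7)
The proof reduces to the pathwise regularity result Theorem~\ref{thm:SP-Holder-ctu} by recognizing that the stochastic convolution in \eqref{eq:reprform} has been expressed as an increment of the ordinary It\^o integral $J(G)$. Indeed,
\[
I(\one_{(s,t)} G) \;=\; \int_s^t G\,dW \;=\; J(G)(t) - J(G)(s),
\]
so writing $f := J(G)$ the pathwise mild solution becomes
\[
U(t) \;=\; \underbrace{-\int_0^t S(t,s)A(s)\bigl(f(t)-f(s)\bigr)\,ds}_{=:\,\zeta(t)} \;+\; \underbrace{S(t,0)f(t)}_{=:\,\tilde\zeta(t)},
\]
which is precisely the object studied in Theorem~\ref{thm:SP-Holder-ctu} with driver $f$. (The composition $S(t,s)A(s)$ is interpreted as a bounded operator $\tilde E_{-\theta}\to\tilde E_a$ via Remark~\ref{rmrk:evol-ineqs}.)

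The plan is then to supply the correct Sobolev regularity of the driver. By \ref{hpths:interpol-spce-wkextra} the space $\tilde E_{-\theta}$ has \umd\ and type $2$, and $G$ is adapted in $L^0(\OO;L^p(0,T;\g(H,\tilde E_{-\theta})))$. Applying Proposition~\ref{prop:type2prop} yields $f=J(G)\in L^0(\OO;W^{\alpha,p}(0,T;\tilde E_{-\theta}))$ for every $\alpha\in(0,\tfrac12)$, and its continuous version satisfies $f(0)=0$. Now fix any $\alpha\in(\lambda+\delta+\theta,\tfrac12)$; since $p>2$ the interval $(\max\{\lambda+\delta+\theta,\tfrac1p\},\tfrac12)$ is nonempty, so we may further arrange $\alpha>\tfrac1p$. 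The hypotheses of Theorem~\ref{thm:SP-Holder-ctu} are then met: $\theta\in[0,\eta_-)$, $\alpha>\theta$, $\delta+\lambda<\min\{\alpha-\theta,\eta_+\}$, and $\alpha>\tfrac1p$ with $f(0)=0$. Part~(1) applied to $\zeta$ and part~(2) applied to $\tilde\zeta$ give
\[
\|U\|_{W^{\lambda,p}(0,T;\tilde E_\delta)} \;\le\; \|\zeta\|_{W^{\lambda,p}(0,T;\tilde E_\delta)} + \|\tilde\zeta\|_{W^{\lambda,p}(0,T;\tilde E_\delta)} \;\le\; C\,\|J(G)\|_{W^{\alpha,p}(0,T;\tilde E_{-\theta})} \quad\text{a.s.},
\]
where $C$ depends on $\omega$ only through the constants in \ref{asmp:AT1}--\ref{hpths:interpol-spce-wkextra}.

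Adaptedness of $U$ is a separate bookkeeping step: for each $t$ the random variable $J(G)(t)$ is $\F_t$-measurable (standard It\^o integral of an adapted process); by Proposition~\ref{prop:evolsys-adapted} every $S(t,s)$ is $\F_t$-measurable in the uniform operator topology; by \ref{asmp:A-mble-adptd} the resolvent of $A(s)$ is $\F_s\subset\F_t$-measurable. Consequently the integrand $s\mapsto S(t,s)A(s)(J(G)(t)-J(G)(s))$ is $\F_t$-measurable for every $s\le t$, and so are the Bochner integral $\zeta(t)$ and the boundary term $\tilde\zeta(t)=S(t,0)J(G)(t)$.

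The genuine work has already been done in Theorem~\ref{thm:SP-Holder-ctu}, which balances the $(t-s)^{-1}$ singularity of $S(t,s)A(s)$ against the Sobolev regularity of $f$ via the Hardy--Young and fractional Hardy inequalities; the present theorem is essentially a packaging result. The only points one must be careful about are (i) verifying that $\tilde E_{-\theta}$ has the geometry needed to give stochastic integrability in $W^{\alpha,p}$, which is built into \ref{hpths:interpol-spce-wkextra}, and (ii) choosing $\alpha$ compatible with both the range $(\lambda+\delta+\theta,\tfrac12)$ and the requirement $\alpha>1/p$ from part~(2) of Theorem~\ref{thm:SP-Holder-ctu}; both are free for us since $p>2$.
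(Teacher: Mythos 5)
Your proof is correct and follows essentially the same route as the paper: identify $I(\one_{(s,t)}G)=J(G)(t)-J(G)(s)$, obtain $J(G)\in W^{\alpha,p}(0,T;\tilde{E}_{-\theta})$ from Proposition~\ref{prop:type2prop}, and apply Theorem~\ref{thm:SP-Holder-ctu} to the two terms, with adaptedness handled via Proposition~\ref{prop:evolsys-adapted}. Your explicit check that $\alpha$ may be chosen above $1/p$ (needed for part~(2) of Theorem~\ref{thm:SP-Holder-ctu}) is a detail the paper's proof leaves implicit.
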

Note that $J(G)\in W^{\alpha,p} (0,T;\tilde{E}_{-\theta})$ a.s.\ by Proposition \ref{prop:type2prop}.

\begin{proof}
Let $\alpha\in (\lambda+\delta+\theta, \tfrac12)$. By Proposition \ref{prop:type2prop}, $J(G)$ belongs to the space $L^0(\OO;W^{\alpha,p} (0,T;\tilde{E}_{-\theta}))$. Therefore, by Theorem \ref{thm:SP-Holder-ctu} we find the required regularity and estimate for the paths of $U$. The measurability and adaptedness of $U$ follows from Proposition \ref{prop:evolsys-adapted} and approximation.
\end{proof}

\subsection{Weak solutions\label{subsec:weaksol}}
In this section we assume \ref{asmp:A-mble-adptd}-\ref{hpths:interpol-spce-wkextra}.
Formally, applying a functional $x^*\in E_0^*$ on both sides of \eqref{eq:SEEsimple} and integrating gives
\begin{align}\label{eq:weak1}
\lb U(t), x^*\rb &= \int_0^t \lb U(s), A(s)^* x^*\rb \;ds + \int_0^t G(s)^*x^* \;dW(s),
\end{align}
where the last expression only makes sense if $x^*\in D(A(s)^*)$ for almost all $s\in (0,T)$ and $\omega\in \OO$, and $s\mapsto \lb U(s),A(s)^* x^*\rb$ is in $L^1(0,T)$ almost surely.

In the case the domains $D(A(t,\omega))$ depend on $(t,\omega)$, it is more natural to use $(t,\omega)$-dependent functionals $\varphi:[0,t]\times\OO\to E_0^*$ to derive a weak formulation of the solution. Here $\varphi$ will be smooth in space and time, but will not be assumed to be adapted. Formally, applying the product rule to differentiate and then integrate the differentiable function $\lb U(t) - I(\one_{(0,t)}G), \varphi(t)\rb$, one derives that
\begin{align*}
\lb &U(t) - I(\one_{(0,t)}G), \varphi(t)\rb \\& = \int_0^t \lb A(s)U(s), \varphi(s)\rb \;ds + \int_0^t \lb U(s) - I(\one_{(0,s)}G),\varphi'(s)\rb \;ds
\\ & = \int_0^t \lb U(s), A(s)^*\varphi(s)\rb \;ds + \int_0^t \lb U(s),\varphi'(s)\rb \;ds - \int_0^t \lb I(\one_{(0,s)}G),\varphi'(s)\rb \;ds.
\end{align*}
Adding the stochastic integral term to both sides yields
\begin{equation}\label{eq:var1}
\begin{aligned}
\lb U(t), \varphi(t)\rb & = \int_0^t \lb U(s), A(s)^*\varphi(s)\rb \;ds + \int_0^t \lb U(s),\varphi'(s)\rb \;ds \\ & \qquad - \int_0^t \lb I(\one_{(0,s)}G),\varphi'(s)\rb \;ds + \lb I(\one_{(0,t)}G), \varphi(t)\rb.
\end{aligned}
\end{equation}
Clearly, \eqref{eq:var1} reduces to \eqref{eq:weak1} if $\varphi \equiv x^*$. Below we will show that the pathwise mild solution \eqref{eq:reprform} is equivalent to \eqref{eq:var1}. Moreover, in the case the domains are constant in time, both are equivalent to \eqref{eq:weak1}. Therefore, this provides the appropriate weak setting to extend the equivalence of Proposition \ref{prop:equivstrongrepr}.

First we define a suitable space of test functions.
\begin{definition}
For $t\in [0,T]$ and $\beta\geq 0$ let $\Gamma_{t,\beta}$ be the subspace of all $\varphi \in L^0(\OO;C^{1}([0,t];E_0^*))$ such that
\begin{enumerate}[(1)]
\item for all $s\in [0,t)$ and $\omega\in \OO$, we have $\varphi(s) \in D(((-A(s))^{\beta+1})^*)$ and $\varphi'(s)\in D(((-A(s))^{\beta})^*)$.
\item the process $s\mapsto A(s)^*\varphi(s)$ is in $L^0(\OO;C([0,t];E_0^*))$.
\item There is a mapping $C:\Omega\to \R_+$ and an $\e>0$ such that for all $s\in [0,t)$,
\[\| ((-A(s))^{1+\beta})^*\varphi(s)\| + \|((-A(s))^{\beta})^*\varphi'(s)\|\leq C(t-s)^{-1 + \varepsilon}.\]
\end{enumerate}
\end{definition}

\begin{example}\label{ex:rmrk-Gamma_t}
Let $x^*\in D(A(t)^*)$. For all $\beta\in [0,\mu^*+\nu^*-1)$ the process $\varphi: [0,t]\times \OO\to E_0^*$ defined by $\varphi(s) = S(t,s)^*x^*$ belongs to $\Gamma_{t,\beta}$. Indeed, first of all $\varphi\in L^0(\Omega;C^1([0,t];E_0^*))$ (see below \eqref{eqn:adjoint-evol-drvtv}). Moreover,  $-A(s)^* \varphi(s) =  \varphi'(s) =  -A(s)^*S(t,s)x^*$ is continuous,
and by the adjoint version of \eqref{thrm-evolsys-frct-ineq5} the latter satisfies
\begin{align*}
\|((-A(s))^{1+\beta})^*S(t,s)^*x^*\|& \leq \|((-A(s))^{1+\beta})^*S(t,s)^*(A(t)^{-\lambda})^*\| \|((-A(t))^{\lambda})^* x^*\|
\\ & \leq C(t-s)^{-1-\beta+\lambda}  \|((-A(t))^{\lambda})^* x^*\|,
\end{align*}
for all $\lambda\in (0, 1)$. The latter satisfies the required condition whenever $\lambda\in (\beta,1)$.
\end{example}

In the next theorem we show the equivalence of the formulas \eqref{eq:reprform} and \eqref{eq:var1}. It extends Proposition \ref{prop:equivstrongrepr} to the unbounded setting.

\begin{theorem}\label{thm:varrepr}
Let $p\in (2, \infty)$ and let $G$ be an adapted process belonging to $L^0(\OO;L^p(0,T;\gamma(H,\tilde{E}_{-\theta})))$.
\begin{enumerate}
\item If for all $t\in [0,T]$, \eqref{eq:reprform} holds a.s., then for all $\beta\in (\theta, \eta_-)$, for all $t\in [0,T]$ and for all $\varphi\in \Gamma_{t,\beta}$ the identity \eqref{eq:var1} holds a.s.
\item If $U\in L^0(\Omega;L^1(0,T;E_0))$ and there is $\beta\in (\theta, \eta_-)$ such that for all $t\in [0,T]$, for all $\varphi\in \Gamma_{t,\beta}$ the identity \eqref{eq:var1} holds a.s., then for all $t\in [0,T]$, $U$ satisfies \eqref{eq:reprform} a.s.
\end{enumerate}
\end{theorem}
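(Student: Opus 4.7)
My plan is to introduce the shorthand $M(s):=I(\mathbf{1}_{(0,s)}G)$ so that \eqref{eq:reprform} reads $U(t)=-\int_0^t S(t,s)A(s)(M(t)-M(s))\,ds+S(t,0)M(t)$, and then to exploit the product-rule identity
\[\frac{d}{d\sigma}\bigl[S(\sigma,s)^{*}\varphi(\sigma)\bigr]=S(\sigma,s)^{*}\bigl(A(\sigma)^{*}\varphi(\sigma)+\varphi'(\sigma)\bigr),\qquad 0\leq s<\sigma\leq t,\]
which follows from \eqref{eqn:adjoint-evol-drvtv} and extends to $E_0^{*}$ via the bounds of Lemma \ref{lem:diff-prop-evolsys}.

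For (2), I would plug in the special test function $\varphi(s):=S(t,s)^{*}x^{*}$ with $x^{*}\in D(A(t)^{*})$. By Example \ref{ex:rmrk-Gamma_t} this $\varphi$ lies in $\Gamma_{t,\beta}$, and the product-rule identity gives $A(s)^{*}\varphi(s)+\varphi'(s)\equiv 0$, so \eqref{eq:var1} collapses to the $U$-free characterization
\[\langle U(t),x^{*}\rangle=\int_0^t\langle M(s),A(s)^{*}S(t,s)^{*}x^{*}\rangle\,ds+\langle M(t),x^{*}\rangle.\]
Since $E_0$ is reflexive, $D(A(t)^{*})$ is dense in $E_0^{*}$, so this identity determines $U(t)\in E_0$ uniquely. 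It will remain to verify that the $\tilde U$ from \eqref{eq:reprform} satisfies the same characterization; for this I would pair \eqref{eq:reprform} with $x^{*}$, use the adjoint relation $(S(t,s)A(s))^{*}=A(s)^{*}S(t,s)^{*}$ from Lemma \ref{lem:diff-prop-evolsys}, and telescope via $\int_0^t A(s)^{*}S(t,s)^{*}x^{*}\,ds=-x^{*}+S(t,0)^{*}x^{*}$, which is just \eqref{eqn:adjoint-evol-drvtv}.

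For (1), I would proceed by pathwise mollification. For each fixed $\omega$ let $M_n:=\rho_n*M(\cdot,\omega)$ (with $M$ suitably extended outside $[0,T]$), so each $M_n$ is $C^{1}$ with $M_n(0)=0$ and $M_n\to M$ in $W^{\alpha,p}(0,T;\tilde E_{-\theta})$. Define $U_n$ by the deterministic analogue of \eqref{eq:reprform} with $M_n$; Fubini and the telescoping $\int_0^r S(t,s)A(s)\,ds=S(t,0)-S(t,r)$ collapse $U_n$ to the classical convolution $U_n(t)=\int_0^t S(t,r)\dot M_n(r)\,dr$. The identity \eqref{eq:var1} for the pair $(U_n,M_n)$ is then an elementary Fubini swap followed by the product-rule identity applied to the inner $\sigma$-integral (producing $\langle S(t,r)\dot M_n(r),\varphi(t)\rangle-\langle\dot M_n(r),\varphi(r)\rangle$) and a classical integration by parts in $r$. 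Finally, Theorem \ref{thm:SP-Holder-ctu} yields $U_n\to U$ in $W^{\lambda,p}(0,T;\tilde E_\delta)$, and the $(t-\sigma)^{-1+\varepsilon}$ bound on $A(\sigma)^{*}\varphi(\sigma)+\varphi'(\sigma)$ built into $\Gamma_{t,\beta}$ makes each term in \eqref{eq:var1} continuous in $(U,M)$, allowing passage to the limit.

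The main obstacle will be balancing three competing singularities: the $(\sigma-s)^{-1}$ blow-up of $\|S(\sigma,s)A(s)\|$ from Lemma \ref{lem:diff-prop-evolsys}, the $(t-\sigma)^{-1+\varepsilon}$ growth of $\|A(\sigma)^{*}\varphi(\sigma)+\varphi'(\sigma)\|$ from the definition of $\Gamma_{t,\beta}$, and the limited Sobolev regularity $W^{\alpha,p}(0,T;\tilde E_{-\theta})$ of $M$ from Proposition \ref{prop:type2prop}. The exponent constraints in \ref{hpths:interpol-spce-wk}--\ref{hpths:interpol-spce-wkextra} together with $\beta\in(\theta,\eta_{-})$ are precisely what make every Fubini interchange, every pointwise pairing, and the limit passage $M_n\to M$ absolutely convergent.
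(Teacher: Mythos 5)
Your argument for part (2) is essentially the paper's: the same special test functions $\varphi(s)=S(t,s)^*x^*$, the same collapse of \eqref{eq:var1} via $\varphi'=-A(\cdot)^*\varphi$, the same telescoping of $\int_0^t A(s)^*S(t,s)^*x^*\,ds$ to match the characterization against \eqref{eq:reprform}, and the same density/Hahn--Banach conclusion. For part (1) you take a genuinely different route. The paper works directly on the singular integrals: it pairs \eqref{eq:reprform} with $x^*\in D(((-A(s))^{\lambda})^*)$, uses the weak telescoping identity \eqref{eq:primitieve} to rewrite the solution as $\lb U(s),x^*\rb=\int_0^s\lb S(s,r)A(r)I(\one_{(0,r)}G),x^*\rb\,dr+\lb I(\one_{(0,s)}G),x^*\rb$, then substitutes $x^*=A(s)^*\varphi(s)$, integrates in $s$, and applies Fubini together with the product-rule identity \eqref{eqn:equiv-reprsol-varsol-ibpfrmla}; every interchange is justified by $\|(-A(s))^{-\lambda}S(s,r)(-A(r))^{1+\beta}\|_{\calL(E_0)}\leq C(s-r)^{-1-\beta+\lambda}$ with $\lambda>\beta$, and notably only boundedness of $r\mapsto I(\one_{(0,r)}G)$ in $\tilde{E}_{-\theta}$ is used, not its Sobolev regularity. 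You instead mollify $M=I(\one_{(0,\cdot)}G)$, collapse the regularized representation to the classical convolution $\int_0^t S(t,r)\dot M_n(r)\,dr$, verify \eqref{eq:var1} there by elementary calculus, and pass to the limit via Theorem \ref{thm:SP-Holder-ctu}. This is sound: your collapse rests on the same identity \eqref{eq:primitieve} (in strong form, obtained from the weak form by Hahn--Banach), your inner $\sigma$-integration is exactly \eqref{eqn:equiv-reprsol-varsol-ibpfrmla} applied to smooth data, and the limit passage works because $A(\cdot)^*\varphi(\cdot)$ and $\varphi'$ are bounded in $E_0^*$, the weight $(t-\sigma)^{-1+\varepsilon}$ from the definition of $\Gamma_{t,\beta}$ is integrable against the uniformly convergent $M_n$, and $U_n\to U$ in $C([0,T];E_0)$ by linearity of $f\mapsto U$ in Theorem \ref{thm:SP-Holder-ctu} with $\alpha\in(1/p,1/2)$. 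Two details to tidy: $M_n(0)=0$ is not automatic for a symmetric mollifier (use a one-sided kernel, or carry the extra term $S(t,0)M_n(0)\to 0$), and the application of Theorem \ref{thm:SP-Holder-ctu}(2) to $M_n-M$ needs this vanishing at $0$. What your approach buys is that all singular-kernel manipulations are performed on smooth data; what it costs is the extra regularity input $J(G)\in W^{\alpha,p}$ with $\alpha>1/p$, which the paper's direct weak-pairing computation does not need.
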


We have already seen that \eqref{eq:reprform} is well-defined. Also all terms in \eqref{eq:var1}  are well-defined. For instance
\begin{align*}
|\lb I(\one_{(0,s)}G),\varphi'(s)\rb| &= |\lb (-A(s))^{-\beta} I(\one_{(0,s)}G), ((-A(s))^{\beta})^* \varphi'(s)\rb|
\\ & \leq C \sup_{r\in [0,T]}\|I(\one_{(0,r)}G)\|_{E_{-\theta}}  (t-s)^{-1+\varepsilon}
\end{align*}
and the latter is integrable with respect to $s\in (0,t)$.

In the proof we will use  the following well-known identity:
\begin{equation}\label{eq:primitieve}
\int_0^s \lb S(s,r) A(r) x,x^*\rb \, dr =  \lb S(s,0) x,x^*\rb  -
\lb x,x^*\rb, \ \ \ x\in \tilde{E}_{-\theta}, x^*\in D((-A(s)^*)^{\beta}),
\end{equation}
where $\beta\in (\theta, \eta_-)$. The identity \eqref{eq:primitieve} can be proved by first taking $x^* \in D(A(s)^*)$ and $x\in E_0$, and an approximation argument to obtain the identity for $x\in \tilde{E}_{-\theta}$ and $x^*\in D(((-A(s))^{\beta})^*)$.

\begin{proof}
(1): \ Assume \eqref{eq:reprform} holds and fix $s\in [0,T]$ for the moment.
Let $\beta\in (\theta, \eta_-)$ and choose $\lambda\in (\beta, \eta_-)$. Let $x^*\in D(((-A(s))^{\lambda})^*)$.
By \eqref{thrm-evolsys-frct-ineq5},
\[\|(-A(s))^{-\lambda} S(s,r) (-A(r))^{1+\beta}\|_{\calL(E_0)} \leq C(s-r)^{-1-\beta+\lambda}.\]
Since also $r\mapsto I(\one_{(0,r)}G)$ is in $L^0(\Omega;L^\infty(0,T;\tilde{E}_{-\theta}))$ it follows that
\begin{align*}
&\int_0^s  |\lb S(s,r) A(r) I(\one_{(r,s)}G),x^* \rb| \, dr  \\ & =
\int_0^s |\lb  (-A(s))^{-\lambda} S(s,r) (-A(r))^{1+\beta} (-A(r))^{-\beta} I(\one_{(r,s)}G), ((-A(s))^{\lambda})^* x^*  \rb| \, dr
\\ & \leq C_G \int_0^s (s-r)^{-1-\beta+\lambda} \|((-A(s))^{\lambda})^* x^* \| \, dr
\\ & \leq C_G' \|((-A(s))^{\lambda})^* x^*\|.
\end{align*}
Since $I(\one_{(r,s)}G) = I(\one_{(0,s)}G) - I(\one_{(0,r)}G)$, we can write
\begin{align*}
\int_0^s & \lb S(s,r) A(r) I(\one_{(r,s)}G),x^*\rb \, dr \\ & =
\int_0^s \lb S(s,r) A(r) I(\one_{(0,s)}G),x^* \rb \, dr - \int_0^s  \lb S(s,r) A(r) I(\one_{(0,r)}G),x^* \rb \, dr.
\end{align*}

By \eqref{eq:primitieve} (with $x= I(\one_{(0,s)}G)$), \eqref{eq:reprform} and the above identity we find that
\begin{align}\label{eq:weakidhulp}
\lb U(s), x^*\rb =  \int_0^s  \lb S(s,r) A(r) I(\one_{(0,r)}G),x^* \rb \, dr +  \lb I(\one_{(0,s)}G),x^*\rb.
\end{align}
Now let $\varphi\in \Gamma_{t,\beta}$. Applying the above with $x^* = A(s)^*\varphi(s)$
and integrating over $s\in (0,t)$ we find that
\begin{equation}\label{eq:calcvar}
\begin{aligned}
\int_0^t & \lb U(s), A(s)^*\varphi(s)\rb\, ds  - \int_0^t \lb I(\one_{(0,s)}G),A(s)^*\varphi(s)\rb\, ds
\\ & =  \int_0^t \int_0^s  \lb S(s,r) A(r) I(\one_{(0,r)}G),A(s)^*\varphi(s) \rb \, dr\, ds
\\ & = \int_0^t \int_r^t  \lb S(s,r) A(r) I(\one_{(0,r)}G),A(s)^*\varphi(s) \rb \, ds\, dr.
\end{aligned}
\end{equation}
Since $\frac{d}{dt}S(t,s) = A(t)S(t,s)$, with an approximation argument it follows that for all $x\in \tilde{E}_{-\theta}$ and $0\leq r\leq t\leq T$,
\begin{equation}\label{eqn:equiv-reprsol-varsol-ibpfrmla}
\begin{aligned}
\lb S(t,r)A(r) x, \varphi(t)\rb - \lb x, A(r)^*\varphi(r) \rb &= \int_r^t \lb S(s,r)A(r)x, A(s)^*\varphi(s)\rb\;ds\\ & \qquad  + \int_r^t \lb S(s,r)A(r)x, \varphi'(s)\rb\;ds.
\end{aligned}
\end{equation}
Note that the above integrals converge absolutely. Indeed, for all $\varepsilon>0$ small, one has by \eqref{thrm-evolsys-frct-ineq5} and the assumption on $\varphi$ that
\begin{align*}
|\lb S(s,r)A(r) x, A(s)^*\varphi(s)\rb|& = |\lb (-A(s))^{-\lambda}S(s,r)A(r) x, ((-A(s))^{1+\lambda})^*\varphi(s)\rb|
\\ & \leq C(s-r)^{-1-\theta-\varepsilon+\lambda} (t-s)^{-1+\varepsilon}.
\end{align*}
The latter is clearly integrable with respect to $s\in (r,t)$ for $\varepsilon>0$ small enough. The same estimate holds with $A(s)^*\varphi(s)$ replaced by $\varphi'(s)$.

Using \eqref{eqn:equiv-reprsol-varsol-ibpfrmla} in the identity \eqref{eq:calcvar} we find that
\begin{align*}
\int_0^t \lb U(s), A(s)^*\varphi(s)\rb\, ds & = \int_0^t \lb S(t,r) A(r) I(\one_{(0,r)}G),\varphi(t) \rb\, dr
\\ & \qquad - \int_0^t \int_r^t \lb S(s,r) A(r) I(\one_{(0,r)}G),\varphi'(s) \rb \, ds\, dr.
\end{align*}
Therefore, by \eqref{eq:weakidhulp} applied with $s=t$ and $x^* = \varphi(t)$, and Fubini's theorem we find
\begin{align*}
\int_0^t \lb U(s), A(s)^*\varphi(s)\rb\, ds & = \lb U(t), \varphi(t)\rb - \lb I(\one_{(0,t)}G),\varphi(t)\rb \\ & \qquad -\int_0^t \lb U(s),\varphi'(s)\rb \,ds + \int_0^t \lb I(\one_{(0,s)}G),\varphi'(s)\rb \, ds.
\end{align*}
This implies that $U$ satisfies \eqref{eq:var1}.

(2): Assume \eqref{eq:var1} holds. Fix $t\in [0,T]$ and $x^*\in D(A(t)^*)$. By Example \ref{ex:rmrk-Gamma_t} the process $\varphi:[0,t]\times\Omega\to E_0^*$ given by $\varphi(s) = S(t,s)^*x^*$ is in $\Gamma_{t,\beta}$ for all $\beta\in [0,\eta_-)$. Applying \eqref{eq:var1} and using that $\varphi'(s) = -A(s)^*\varphi(s)$ we find that
\begin{align*}
\lb U(t), x^*\rb & =\int_0^t \lb S(t,s)A(s)I(\one_{(0,s)}G),x^*\rb \;ds + \lb I(\one_{(0,t)}G), x^*\rb
\end{align*}
and as in part (1) of the proof this can be rewritten as
\begin{equation}\label{eq:almostrepr}
\lb U(t), x^*\rb  =-\int_0^t \lb S(t,s)A(s)I(\one_{(s,t)}G),x^*\rb \;ds + \lb S(t,0) I(\one_{(0,t)}G), x^*\rb.
\end{equation}
The identity \eqref{eq:reprform} follows from the Hahn-Banach theorem and density of $D(A(t)^*)$ in $E_0^*$.
\end{proof}

\begin{remark}
If $\varphi$ in \eqref{eq:var1} is not dependent of $\Omega$, then the stochastic Fubini theorem and integration by parts show that \eqref{eq:var1} is equivalent with
\begin{equation}\label{eq:var2}
\begin{aligned}
\lb U(t), \varphi(t)\rb & = \int_0^t \lb U(s), A(s)^*\varphi(s)\rb \;ds + \int_0^t \lb U(s),\varphi'(s)\rb \;ds \\ & \qquad - \int_0^t G(s)^* \varphi(s)\, d W(s).
\end{aligned}
\end{equation}
This solution concept coincides with the one in \cite{Veraar-SEE} and is usually referred to as a variational solution. Using the forward integral one can obtain \eqref{eq:var2} from \eqref{eq:var1} for $\varphi$ depending on $\omega$ in a nonadapted way (see Section \ref{subsec:mild} for the definition of the forward integral).
\end{remark}

In the next theorem we show the equivalence of the pathwise mild solution \eqref{eq:reprform} and the usual weak formulation \eqref{eq:weak1}. It extends also Proposition \ref{prop:equivstrongrepr} to the unbounded setting.
\begin{theorem}\label{thm:weakrepr}
Let $G$ be an adapted process in $L^0(\OO;L^p(0,T;\gamma(H,\tilde{E}_{-\theta})))$, and let
\[F = \bigcap_{t\in [0,T], \omega\in\Omega} D((A(t,\omega))^*).\]
\begin{enumerate}[(1)]
\item Assume $D(A(t))=D(A(0))$ isomorphically with uniform estimates in $t\in [0,T]$ and $\omega\in \Omega$. If for all $t\in [0,T]$, \eqref{eq:reprform} holds a.s., then for all $x^*\in F$, for all $t\in [0,T]$, \eqref{eq:weak1} holds a.s.

\item Assume $D(A(t)^*)=D(A(0)^*)$ isomorphically with uniform estimates in $t\in [0,T]$ and $\omega\in \Omega$. If for all $x^*\in F$ and $t\in [0,T]$, \eqref{eq:weak1} holds a.s., then for all $t\in [0,T]$, \eqref{eq:reprform} holds a.s.
\end{enumerate}
\end{theorem}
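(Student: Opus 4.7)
I would bypass Theorem \ref{thm:varrepr} and argue directly, since the constant test function $\varphi(s) \equiv x^*$ that matches \eqref{eq:weak1} does not by itself lie in $\Gamma_{t,\beta}$ (which asks for fractional--power regularity of $x^*$ with respect to $A(s)^*$). The shared input for both parts is the uniform bound
\[
\sup_{s \in [0,T]} \|A(s)^* x^*\|_{E_0^*} < \infty \qquad (x^* \in F):
\]
in Part (2) this is immediate from the hypothesis, while in Part (1) it follows from reflexivity of $E_0$ (implicit in \ref{hpths:interpol-spce-wkextra} via \textsc{umd}), which converts the constant primal domain into uniform constancy of the adjoint domain.

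\textbf{Part (1).} Using \eqref{eq:primitieve} I would first rewrite \eqref{eq:reprform} in the equivalent form
\[
U(t) = I(\one_{(0,t)}G) + \int_0^t S(t,s) A(s) I(\one_{(0,s)}G)\,ds,
\]
pair both sides with $x^* \in F$, and use that $x^*$ is deterministic, so $\int_0^t G(s)^* x^*\,dW(s) = \lb I(\one_{(0,t)}G), x^*\rb$. This reduces \eqref{eq:weak1} to
\[
\int_0^t \lb U(s), A(s)^* x^*\rb\,ds = \int_0^t \lb S(t,s) A(s) I(\one_{(0,s)}G), x^*\rb\,ds;
\]
substituting the formula for $U(s)$ on the left-hand side and applying Fubini reduces this further to the pointwise identity
\[
\int_r^t \lb S(s,r) A(r) y, A(s)^* x^*\rb\,ds = \lb S(t,r) A(r) y, x^*\rb - \lb y, A(r)^* x^*\rb,
\qquad y \in E_0.
\]
For $y \in D(A(r))$ this is the fundamental theorem of calculus applied to the absolutely continuous function $s \mapsto \lb S(s,r) A(r) y, x^*\rb$, combined with $\tfrac{d}{ds}S(s,r) = A(s) S(s,r)$ and strong continuity at $s = r$. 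The extension to $y \in E_0$ is by density, using the continuous extension $S(s,r)A(r) \in \calL(E_0)$ from Lemma \ref{lem:diff-prop-evolsys} together with the duality $\lb A(r) y_n, x^*\rb = \lb y_n, A(r)^* x^*\rb$. The uniform bound on $\|A(s)^* x^*\|$ is precisely what keeps the $s$-integrand bounded and legitimises the Fubini interchange despite the $(s-r)^{-1}$-singularity of $S(s,r)A(r)$.

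\textbf{Part (2).} Let $\tilde U$ denote the pathwise mild solution \eqref{eq:reprform} with data $G$; the computation above still applies under the constant adjoint domain hypothesis and shows that $\tilde U$ satisfies \eqref{eq:weak1}. Setting $V := U - \tilde U$, the stochastic integrals cancel and $V$ satisfies
\[
\lb V(r), x^*\rb = \int_0^r \lb V(s), A(s)^* x^*\rb\,ds, \qquad r \in [0,T],\ x^* \in F
\]
almost surely; it remains to show $V \equiv 0$. Fix $t$ and $y^* \in D(A(t)^*)$ and set $\Psi(r) := \lb V(r), S(t,r)^* y^*\rb$ on $[0,t]$. Under the constant adjoint domain hypothesis $S(t,r)^* y^* \in F$ for every $r$, so the weak equation can be evaluated along this family; a careful differentiation combining the weak equation for $V$ with $\tfrac{d}{dr} S(t,r)^* y^* = -A(r)^* S(t,r)^* y^*$ from \eqref{eqn:adjoint-evol-drvtv} produces two cancelling contributions, and hence $\Psi' \equiv 0$ on $(0,t)$. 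Since $\Psi(0) = 0$, we obtain $\lb V(t), y^*\rb = \Psi(t) = 0$, and density of $D(A(t)^*)$ in $E_0^*$ (by reflexivity) yields $V(t) = 0$; thus $U = \tilde U$ satisfies \eqref{eq:reprform}.

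The main technical obstacle in both parts is to make rigorous the formal differentiations $\tfrac{d}{ds}\lb S(s,r) A(r) y, x^*\rb$ (Part 1) and $\tfrac{d}{dr}\lb V(r), S(t,r)^* y^*\rb$ (Part 2) in situations where ``$A(r)y$'' or ``$A(r) V(r)$'' is not literally in $E_0$. The common remedy is to always pair with a vector in $D(A(s)^*)$, transfer $A(s)$ onto the dual side so that one genuinely sees $A(s)^* x^*$, and then combine the bounded extension of $S(s,r)A(r)$ from Lemma \ref{lem:diff-prop-evolsys} with the uniform bound on $\|A(s)^* x^*\|$ produced by the constant-domain hypothesis to control the resulting limits and the interchange of integrals.
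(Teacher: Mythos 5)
There is a genuine gap, concentrated in your Part (1) and inherited by Part (2). Your plan reduces everything to the pointwise identity $\int_r^t \lb S(s,r)A(r)y, A(s)^*x^*\rb\,ds = \lb S(t,r)A(r)y,x^*\rb - \lb y,A(r)^*x^*\rb$ for $y\in E_0$ (in fact you need it for $y=I(\one_{(0,r)}G)\in\tilde E_{-\theta}$), proved by density from $y\in D(A(r))$. This extension does not go through: the only bound you invoke is $\|S(s,r)A(r)\|_{\calL(E_0)}\le C(s-r)^{-1}$ together with $\sup_s\|A(s)^*x^*\|<\infty$, which gives the majorant $C(s-r)^{-1}$ — it does \emph{not} ``keep the $s$-integrand bounded'', and it is not integrable near $s=r$, so neither dominated convergence along $y_n\to y$ nor the Fubini interchange is justified; the inner integral need not even converge absolutely. (Making it converge requires extra regularity of the test functional, e.g.\ $x^*\in D(((-A(s))^{1+\lambda})^*)$ as in the definition of $\Gamma_{t,\beta}$ and in \eqref{eqn:equiv-reprsol-varsol-ibpfrmla}, which a plain $x^*\in F$ does not provide.) Moreover, your ``shared input'' $\sup_s\|A(s)^*x^*\|<\infty$ is itself unjustified in Part (1): constancy of $D(A(t))$ gives $A(s)^*=A(0)^*(A(s)A(0)^{-1})^*$ and hence a uniform bound on $A(s)^*x^*$ only as a functional on $D(A(0))$, not in $E_0^*$. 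The paper's proof closes exactly this gap by regularizing the \emph{data} rather than the vector $y$: it sets $G_n(t)=n^2R(n,A(t))^2G(t)$ so that the whole argument runs with $x\in E_1$ (where $S(s,r)A(r)x$ stays bounded as $s\downarrow r$ and \eqref{eqn:equiv-reprsol-varsol-ibpfrmla} is legitimate), obtains \eqref{eq:weak1} for $U_n$, and then lets $n\to\infty$ in the \emph{integrated} identity using the continuity of the solution map $G\mapsto U$ from Proposition \ref{prop:type2prop} and Theorem \ref{thm:pathregularity}. That approximation step is the substance of Part (1) and is entirely absent from your proposal.

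Your Part (2) strategy — build the pathwise mild solution $\tilde U$ for the same data, show it is a weak solution, and prove uniqueness for \eqref{eq:weak1} via $\Psi(r)=\lb V(r),S(t,r)^*y^*\rb$ — is a legitimate alternative in outline to the paper's route (which instead upgrades \eqref{eq:weak1} to the variational identity \eqref{eq:var1} for test functions $f\otimes x^*$ and then invokes Theorem \ref{thm:varrepr}(2)). But as written it has two problems. First, it relies on the Part (1) computation to show $\tilde U$ is a weak solution, so it inherits the gap above (under constant adjoint domains one could in principle repair the integrability via the moment inequality for fractional powers, $\|((-A(s))^{\delta})^*x^*\|\lesssim\|x^*\|^{1-\delta}\|A(s)^*x^*\|^{\delta}$, combined with \eqref{thrm-evolsys-frct-ineq5}, but that argument is not in your proposal). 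Second, ``the weak equation can be evaluated along this family'' hides the real work: \eqref{eq:weak1} is assumed only for fixed, deterministic $x^*\in F$, a.s.\ with an exceptional null set depending on $x^*$, whereas $S(t,r)^*y^*$ is a random, $r$-dependent family; the extension from fixed functionals to $L^0(\Omega;C^1([0,t];E_0^*)\cap C([0,t];F))$-valued ones is precisely what the paper's Part (2) establishes (via $\varphi=f\otimes x^*$, linearity, and approximation) before substituting $\varphi(s)=S(t,s)^*x^*$. Without that extension your differentiation of $\Psi$ is not licensed.
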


\begin{proof}
(1): \ First consider the case where $G(t)\in D(A(t))$ for all $t\in [0,T]$, and the process $t\mapsto A(t)G(t)$ is adapted in $L^0(\OO;L^p(0,T;\gamma(H,E_0))$. Let $x^*\in F$ and take $\varphi \equiv x^*$. Unfortunately,
$\varphi$ is not in $\Gamma_{t,0}$. However, due to the extra regularity of $G$, one can still proceed as in the proof of Theorem \ref{thm:varrepr} (1). Indeed, the only modification needed is that \eqref{eqn:equiv-reprsol-varsol-ibpfrmla} holds for $x^*\in F$ and $x\in E_1$. Moreover, since $\varphi'=0$,
\eqref{eq:weak1} follows from \eqref{eq:var1}.

Now let $G\in L^0(\OO;L^p(0,T;\gamma(H,\tilde{E}_{-\theta}))$ and define an approximation by $G_n(t) = n^2 R(n,A(t))^{-2} G(t)$. Let $U_n$ be given by \eqref{eq:reprform} with $G$ replaced by $G_n$. Then by the above, $U_n$ satisfies
\begin{align}\label{eq:weak1hulp}
\lb U_n(t), x^*\rb &=
\int_0^t \lb U_n(s), A(s)^* x^*\rb \;ds + \int_0^t G_n(s)^*x^* \;dW(s).
\end{align}
By the dominated convergence theorem, almost surely $G_n\to G$ in the space $L^p(0,T;\gamma(H,\tilde{E}_{-\theta}))$. Therefore, Proposition \ref{prop:type2prop} and Theorem \ref{thm:pathregularity} yield that $U_n\to U$ in $L^0(\Omega;L^p(0,T;E_0))$. Letting $n\to \infty$ in \eqref{eq:weak1hulp}, we obtain \eqref{eq:weak1}.

(2): \ The strategy of the proof is to show that $U$ satisfies \eqref{eq:var1} and to apply Theorem \ref{thm:varrepr}. In order to show \eqref{eq:var1} we need to allow the functional $x^*\in F$ to be dependent on $s\in [0,t]$ and $\omega\in \Omega$. In order to do so, fix $t\in [0,T]$, let $f\in C^1([0,t])$ and $x^*\in F$. Let $\varphi = f \otimes x^*$. By integration by parts and \eqref{eq:weak1}  (applied twice) we obtain
\begin{align*}
\lb & U(t), \varphi(t)\rb -\lb I(\one_{[0,t]} G), \varphi(t)\rb  = \int_0^t \lb U(s), A(s)^*x^*\rb \, ds \, f(t)
\\ & = \int_0^t \lb U(s), A(s)^*\varphi(s)\rb \, ds  + \int_0^t \int_0^s \lb U(r), A(r)^* x^*\rb \, dr \, f'(s)\, ds
\\ & = \int_0^t \lb U(s), A(s)^*\varphi(s)\rb \, ds  + \int_0^t \lb U(s), x^*\rb \, f'(s)\, ds - \int_0^t \lb I(\one_{[0,s]}G), x^*\rb \, f'(s)\, ds
\\ & = \int_0^t \lb U(s), A(s)^*\varphi(s)\rb \, ds  + \int_0^t \lb U(s), \varphi'(s)\rb \, ds - \int_0^t \lb I(\one_{[0,s]}G), \varphi'(s)\rb \, ds.
\end{align*}
This yields \eqref{eq:var1} for the special $\varphi$ as above. By linearity and approximation the identity \eqref{eq:var1} can be extended to all $\varphi\in C^1([0,t];E_0^*)\cap C([0,t];F)$. Clearly, the identity extends
to simple functions $\varphi:\Omega\to C^1([0,t];E_0^*)\cap C([0,t];F)$ and by approximation it extends to any $\varphi\in L^0(\Omega;C^1([0,t];E_0^*)\cap C([0,t];F))$. Now let $x^*\in F$ be arbitrary and let $\varphi(s) = S(t,s)^*x^*$. Then as in the proof of Theorem \ref{thm:varrepr} (2) we see that $\varphi\in L^0(\Omega;C^1([0,t];E_0^*)\cap C([0,t];F))$ and \eqref{eq:almostrepr} follows.
\end{proof}

\subsection{Forward integration and mild solutions\label{subsec:mild}}
In this section we show how the forward integral can be used to define the mild solution to \eqref{eq:SEEsimple} and show that it coincides with the pathwise mild solution \eqref{eq:reprform}. The forward integral was developed by Russo and Vallois in \cite{RV91}, \cite{RV93}, and can be used to integrate nonadapted integrands and is based on a regularization procedure. We refer to \cite{RV-survey} for a survey on the subject and a detailed collection of references.

Fix an orthonormal basis $(h_k)_{k\geq 1}$ for $H$. For $G\in L^0(\OO;L^2(0,T;\g(H,E_0)))$ define the sequence $(I^-(G,n))_{n=1}^\infty$ by
\begin{align*}
I^-(G,n) = \sum_{k=1}^n n\int_0^T G(s) h_k (W(s+1/n)h_k - W(s)h_k)\;ds.
\end{align*}
The process $G$ is called {\em forward integrable} if $(I^-(G,n))_{n\geq 1}$ converges in probability.
In that case, the limit is called {\em the forward integral} of $G$ and is denoted by
\[I^{-}(G) = \int_0^T G\;dW^- = \int_0^T G(s)\;dW^-(s).\]
This definition is less general than the one in \cite{PV-forward}, but will suffice for our purposes here.

In \cite{PV-forward} it has been shown that for {\umd} Banach spaces the forward integral extends the It\^o integral from \cite{NVW1}. In particular, the forward integral as defined above extends the stochastic integral as described in Section \ref{subsec:stochint}.

We will now show that the forward integral can be used to extend the concept of mild solutions to the case where $A(t)$ is random. The proof will be based on a pointwise multiplier result for the forward integral from \cite{PV-forward}.

\begin{theorem}\label{thm:forwardequiv}
Assume \ref{asmp:A-mble-adptd}-\ref{hpths:interpol-spce-wkextra}. Let $p\in (2, \infty)$.
Let $\theta \in [0,\tfrac12\wedge \eta_-)$.
Assume $\delta < \min\{\frac12 - \theta - \tfrac1p, \eta_+\}$.
Let $G$ be an adapted process in $L^0(\OO;L^p(0,T;\g(H,\tilde{E}_{-\theta})))$. For every $t\in [0,T]$, the process $s\mapsto S(t,s)G(s)$ is forward integrable on $[0,t]$ with values in $\tilde{E}_\delta$, and
\begin{align}\label{eqn:weaksolrepr-approxseq}
U(t) = \int_0^t S(t,s)G(s)\;dW^-(s),
\end{align}
where $U$ is given by \eqref{eq:reprform}.
\end{theorem}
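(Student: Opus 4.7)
The plan is to apply the integration by parts formula for the forward integral from \cite{PV-forward} to the pair $\Phi(s) := S(t,s)$ and $Y(s) := J(G)(s)$, and to recognize the resulting renormalized expression directly as the right-hand side of \eqref{eq:reprform}. The key point is that although $\Phi$ is only $\F_t$-measurable in $\omega$ (hence nonadapted as a function of $s$), the forward integral is designed to accommodate precisely such integrands, and the singularity of $\Phi'(s) = -S(t,s)A(s)$ at $s=t$ will be absorbed by the H\"older regularity of $Y$.

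First I would fix $\alpha \in (\delta+\theta+\tfrac1p,\, \tfrac12)$, which is possible by the standing assumption $\delta < \tfrac12-\theta-\tfrac1p$. By Proposition \ref{prop:type2prop} and the Sobolev embedding \eqref{eq:fractionalSobolevembReprForm}, the paths of $Y = J(G)$ lie in $C^{\alpha-1/p}([0,T];\tilde{E}_{-\theta})$. By Lemma \ref{lem:diff-prop-evolsys}, $\Phi$ belongs to $C^1([0,t);\calL(E_0))$ with derivative $-S(t,s)A(s)$, and by Remark \ref{rmrk:evol-ineqs} one has
\[\|S(t,s)A(s)\|_{\calL(\tilde{E}_{-\theta},\,\tilde{E}_\delta)} \leq C(\omega)(t-s)^{-1-\delta-\theta-\varepsilon}\]
for $\varepsilon>0$ small. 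Although this bound makes $\Phi' Y$ individually nonintegrable on $(0,t)$, pairing $\Phi'(s)$ against the H\"older increment $Y(s)-Y(t)$ yields the integrable majorant $(t-s)^{\alpha-1/p-1-\delta-\theta-\varepsilon}$, so that $\int_0^t S(t,s)A(s)[Y(s)-Y(t)]\,ds$ converges absolutely in $\tilde{E}_\delta$.

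Next I would invoke the pointwise multiplier / integration by parts result for forward integrals in \umd{} type-$2$ spaces from \cite{PV-forward} to conclude that $s\mapsto S(t,s)G(s)$ is forward integrable on $[0,t]$ with values in $\tilde{E}_\delta$ and that the forward integral admits the renormalized representation
\begin{align*}
\int_0^t S(t,s)G(s)\,dW^-(s) &= \Phi(0) Y(t) - \int_0^t \Phi'(s)\,[Y(s)-Y(t)]\,ds \\
&= S(t,0)Y(t) + \int_0^t S(t,s)A(s)[Y(s)-Y(t)]\,ds.
\end{align*}
Substituting $I(\one_{(0,t)}G) = Y(t)$ and $I(\one_{(s,t)}G) = Y(t)-Y(s)$ into \eqref{eq:reprform} identifies this right-hand side with $U(t)$, establishing \eqref{eqn:weaksolrepr-approxseq}.

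The main obstacle is the renormalized integration by parts in this singular, nonadapted regime. Since $\Phi'(s)=-S(t,s)A(s)$ is not Bochner integrable on $(0,t)$ into $\calL(\tilde{E}_{-\theta},\tilde{E}_\delta)$, the standard It\^o integration by parts cannot be applied term by term; one must combine $\Phi'(s)Y(s)$ and $-\Phi'(s)Y(t)$ into the single absolutely convergent integral $\Phi'(s)[Y(s)-Y(t)]$, and use a weak form of the fundamental theorem of calculus $\int_0^t \Phi'(s)Y(t)\,ds = [\Phi(t)-\Phi(0)]Y(t)$ as in \eqref{eq:primitieve}. Verifying that this renormalization is compatible with the regularization that defines the forward integral, and matches the multiplier theorem of \cite{PV-forward}, is the technical heart of the argument; once it is in place, the identification with \eqref{eq:reprform} is pure algebra.
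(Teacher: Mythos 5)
Your proposal is correct and follows essentially the same route as the paper: both apply the nonadapted pointwise multiplier theorem for the forward integral from \cite{PV-forward} to $M(s)=S(t,s)$ with derivative $N(s)=-S(t,s)A(s)$, use Lemma \ref{lem:diff-prop-evolsys} and the singular bound $\|N(s)\|_{\calL(\tilde{E}_{-\theta},\tilde{E}_\delta)}\leq C(t-s)^{-1-\delta-\theta}$ from Remark \ref{rmrk:evol-ineqs}, and read off that the resulting renormalized expression $M(0)I(\one_{(0,t)}G)+\int_0^t N(s)I(\one_{(s,t)}G)\,ds$ is exactly \eqref{eq:reprform}. Your additional remarks on the H\"older regularity of $J(G)$ absorbing the singularity make explicit what the paper delegates to the cited multiplier result.
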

The above identity is mainly of theoretical interest as it is rather difficult to prove estimates for the forward integral in a direct way. Of course \eqref{eq:reprform} allows to obtain such estimates. Due to \eqref{eqn:weaksolrepr-approxseq} one could call $U$ a {\em forward mild solution} to \eqref{eq:SEEsimple}.

As a consequence of Theorems \ref{thm:weakrepr} and \ref{thm:forwardequiv}, there is an equivalence between weak solutions and forward mild solutions. Under different assumptions it was shown in \cite[Proposition 5.3]{NualartLeon} that every forward mild solution is a weak solution.

\begin{proof}
Define $M:[0,t]\times\OO\to \calL(\tilde{E}_{-\theta}, \tilde{E}_{\delta})$  by $M(s) = S(t,s)$. Let $N:[0,t)\times \OO\to \calL(\tilde{E}_{-\theta}, \tilde{E}_{\delta})$ be given by $N(s) = -S(t,s)A(s)$.
Then by Lemma \ref{lem:diff-prop-evolsys}, $M(s) = M(0) + \int_0^s N(r) \, dr$ for $s\in [0,t)$ and thus $M$ is continuously differentiable with derivative $N$.
By Remark \ref{rmrk:evol-ineqs} there is a mapping $C:\OO\to \R_+$ such that
\[\|N(s)\|_{\calL(\tilde{E}_{-\theta}, \tilde{E}_{\delta})}\leq C(t-s)^{-1-\delta-\theta}.\]
Now by the non-adapted multiplier result for the forward integral from \cite{PV-forward} we find that $M G$ is forward integrable and
\begin{align*}
\int_0^t S(t,s)G(s)\;dW^-(s) &= \int_0^t M(s) G(s)\, d W^{-}(s) \\ & = M(0) I(G) + \int_0^t N(s) I(\one_{[s,t]}G) \, ds = U(t).
\end{align*}
\end{proof}

\begin{remark}
Another pathwise approach to stochastic evolution equations can be given using a Wong-Zakai type regularization procedure. In \cite{AT84}, this procedure has been considered in a linear setting with deterministic $A(t)$. In \cite{TZ06}, in the case that $A$ does not dependent on time, existence and uniqueness of martingale solutions for general stochastic evolution equations have been proved using Wong-Zakai regularization.
\end{remark}

\section{Semilinear stochastic evolution equations\label{sec:semil}}
In this section we assume Hypotheses \ref{asmp:A-mble-adptd}-\ref{hpths:interpol-spce-wkextra}.
We will apply the results of the previous sections to study the following stochastic evolution equation on the Banach space $E_0$
\begin{align}\label{defn:SEE2}
\left\{
    \begin{array}{ll}
        dU(t) &= (A(t)U(t) + F(t,U(t)))\;dt + B(t, U(t))\;dW(t), \\
        U(0) &= u_0.
    \end{array}
\right.
\end{align}
Here $F$ and $B$ will be suitable nonlinearities of semilinear type.
In Section \ref{subs:solcon} we will first state the main hypotheses on $F$ and $B$ and define the concept of a pathwise mild solution. In Section \ref{subs:uniformO} we will prove that there is a unique pathwise mild solution under the additional assumption that the constants in the (AT)-conditions do not depend on $\omega$.
The uniformity condition \ref{hpths:unifrm-const} will be removed in Section \ref{subs:generalO} by localizing the random drift $A$.

\subsection{Setting and solution concepts\label{subs:solcon}}

Recall that the spaces $\tilde{E}_{\eta}$ were defined in \ref{hpths:interpol-spce-wk} in Section \ref{sec:pathwisereg}.
We impose the following assumptions on $F$ and $B$ throughout this section:

\let\ALTERWERTA\theenumi
\let\ALTERWERTB\labelenumi
\def\theenumi{(HF)}
\def\labelenumi{(HF)}
\begin{enumerate}
  \item\label{asmp:Lip-asmpt-F}
  Let $a \in [0,\eta_+)$ and $\theta_F \in [0,\eta_-)$ be such that $a + \theta_F < 1$. For all $x\in \tilde{E}_a$, $(t,\omega) \mapsto F(t,\omega,x) \in \tilde{E}_{-\theta_F}$ is strongly measurable and adapted. Moreover, there exist constants $L_F$ and $C_F$ such that for all $t\in [0,T], \omega\in\Omega, x,y\in \tilde{E}_a$,
        \begin{align*}
            \|F(t,\omega, x) - F(t,\omega, y)\|_{\tilde{E}_{-\theta_F}} &\leq L_F\|x-y\|_{\tilde{E}_a}, \\
            \|F(t,\omega, x)\|_{\tilde{E}_{-\theta_F}} &\leq C_F(1+\|x\|_{\tilde{E}_a}).
        \end{align*}
\end{enumerate}
\let\theenumi\ALTERWERTA
\let\labelenumi\ALTERWERTB

\let\ALTERWERTA\theenumi
\let\ALTERWERTB\labelenumi
\def\theenumi{(HB)}
\def\labelenumi{(HB)}
\begin{enumerate}
  \item\label{asmp:Lip-asmpt-B} Let $a \in [0,\eta_+)$ and $\theta_B \in [0,\eta_-)$ be such that $a + \theta_B < 1/2$. For all $x\in \tilde{E}_a$, $(t,\omega) \mapsto B(t,\omega,x) \in \g(H,\tilde{E}_{-\theta_B})$ is strongly measurable and adapted. Moreover, there exist constants $L_B$ and $C_B$ such that for all $t\in [0,T], \omega\in\Omega, x,y\in \tilde{E}_a$,
        \begin{align*}
            \|B(t,\omega, x) - B(t,\omega, y)\|_{\g(H,\tilde{E}_{-\theta_B})} &\leq L_B\|x-y\|_{\tilde{E}_a}, \\
            \|B(t,\omega, x)\|_{\g(H,\tilde{E}_{-\theta_B})} &\leq C_B(1+\|x\|_{\tilde{E}_a}).
        \end{align*}
\end{enumerate}
\let\theenumi\ALTERWERTA
\let\labelenumi\ALTERWERTB

Let $p\in (2, \infty)$ and consider adapted processes $f\in L^0(\Omega;L^p(0,T;\tilde{E}_{-\theta_F}))$ and $G\in L^0(\Omega;L^p(0,T;\g(H,\tilde{E}_{-\theta_B})))$. In the sequel we will write
\begin{align*}
S*f(t) &:=\int_0^t S(t,s)f(s)\;ds,
\\ S\diamond G(t) &:= - \int_0^t S(t,s)A(s)I(\one_{(s,t)} G)\;ds + S(t,0)I(\one_{(0,t)} G),
\end{align*}
for the deterministic and stochastic (generalized) convolution.

The integral $\int_0^t S(t,s)A(s)I(\one_{(s,t)} G)\;ds$ was extensively studied in Section \ref{sec:repr}. Recall from Theorem \ref{thm:pathregularity} that is it well-defined and defines an adapted process in $L^0(\Omega;W^{\lambda,p}(0,T;\tilde{E}_{\delta}))$ for suitable $\lambda$ and $\delta$.

\begin{definition} \label{defn:repr-sol}
Let $2<p<\infty$. An adapted process $U\in L^0(\Omega;L^p(0,T;\tilde{E}_a))$ is called a {\em pathwise mild solution} of (\ref{defn:SEE2}) almost surely, for almost all $t\in [0,T]$,
\begin{equation}\label{eqn:reprsoldef}
\begin{aligned}
U(t) = S(t,0)u_0 & + S*F(\cdot, U)(t) + S\diamond B(\cdot, U)(t).
\end{aligned}
\end{equation}
\end{definition}
Note that the convolutions in \eqref{eqn:reprsoldef} might only be defined for almost all $t\in [0,T]$. However, if $p\in (2, \infty)$ is large enough, then they are defined in a pointwise sense and pathwise continuous (see Theorems \ref{thm:F-Holder} and \ref{thm:SP-Holder-ctu}) and we obtain that for almost all $\omega\in \Omega$, for all $t\in [0,T]$ \eqref{eqn:reprsoldef} holds.

One can extend Proposition \ref{prop:equivstrongrepr} and Theorems \ref{thm:varrepr}, \ref{thm:weakrepr} and \ref{thm:forwardequiv} to the nonlinear setting. Indeed, this follows by taking $G = B(\cdot, U)$ and including the terms $F$ and $u_0$. The latter two terms do not create any problems despite the randomness of $A$, because the terms are defined in a pathwise way, and therefore can be treated as in \cite{Veraar-SEE}. As a consequence we deduce that \eqref{eqn:reprsoldef} yields the ``right'' solution of \eqref{defn:SEE2} in many ways (variational, forward mild, weak).

\subsection{Results under a uniformity condition in $\OO$\label{subs:uniformO}}

In this section we additionally assume the following uniformity condition.
\let\ALTERWERTA\theenumi
\let\ALTERWERTB\labelenumi
\def\theenumi{(H5)}
\def\labelenumi{(H5)}
\begin{enumerate}
  \item\label{hpths:unifrm-const} The mapping $L:\Omega\to \R_+$ from \ref{asmp:AT2} for $A(t)$ and $A(t)^*$ is bounded in $\Omega$.
\end{enumerate}
\let\theenumi\ALTERWERTA
\let\labelenumi\ALTERWERTB

Under Hypothesis \ref{hpths:unifrm-const}, it is clear from the proofs that most of the constants in Sections \ref{eq:SEF} and \ref{sec:pathwisereg} become uniform in $\Omega$. In Section \ref{subs:generalO} we will show how to obtain well-posedness without the condition \ref{hpths:unifrm-const}.

For a Banach space $X$, we write $B([0,T];X)$ for the strongly measurable functions $f:[0,T]\to X$.
For $\delta\in (-1,\eta_+)$ and $p\in (2,\infty)$ let $Z_{\delta}^p$ be the subspace of strongly measurable adapted processes $u:[0,T]\times\Omega\to \tilde{E}_{\delta}$ for which
\[\|u\|_{Z_{\delta}^p}:=\sup_{t\in [0,T]} \|u(t)\|_{L^p(\Omega;\tilde{E}_{\delta})}<\infty.\]
Define the operator $L: Z^p_\delta\to Z^p_\delta$ by
\begin{align}\label{defn:oprtr-L}
(L(U))(t) = S(t,0)u_0 & + S*F(\cdot, U)(t) + S\diamond B(\cdot, U)(t).
\end{align}
In the next lemma we show that $L$ is well-defined and is a strict contraction in a suitable equivalent norm on $Z^p_a$.

\begin{lemma}\label{lemma:fixedpoint-arg}
Assume \ref{asmp:A-mble-adptd}--\ref{hpths:unifrm-const}, \ref{asmp:Lip-asmpt-F} and \ref{asmp:Lip-asmpt-B}. Let $p\in (2,\infty)$.
If the process $t\mapsto S(t,0) u_0$ is in $Z^p_a$, then
$L$ maps $Z^p_a$ into itself and there is an equivalent norm $\nn\cdot\nn$ on $Z^p_a$ such that for every $u, v\in Z^p_a$,
\begin{align}\label{ineq:fixedpoint-arg-contraction}
\nn L(u)-L(v)\nn_{Z^p_a} \leq \frac12 \nn u-v\nn_{Z^p_a}.
\end{align}
Moreover, there exists a constant $C$ independent of $u_0$ such that
\begin{align}\label{ineq:fixedpoint-arg-contraction2}
\nn L(u)\nn_{Z^p_a} \leq C+ \nn t \mapsto S(t,0) u_0\nn_{Z^p_a} +\frac12 \nn u\nn_{Z^p_a}.
\end{align}
\end{lemma}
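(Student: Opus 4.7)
The plan is to exploit the standard weighted-norm trick, replacing the original norm on $Z^p_a$ by
\[\nn u \nn_{Z^p_a} := \sup_{t\in[0,T]} e^{-\beta t}\|u(t)\|_{L^p(\Omega;\tilde{E}_a)}\]
for a parameter $\beta>0$ to be chosen large. Measurability and adaptedness of $L(u)$ follow from Proposition \ref{prop:evolsys-adapted} together with the standing adaptedness imposed in \ref{asmp:A-mble-adptd}, \ref{asmp:Lip-asmpt-F} and \ref{asmp:Lip-asmpt-B}: for fixed $t$, the random variables $S(t,s)$, $A(s)$, $I(\one_{(s,t)}G)$ and $I(\one_{(0,t)}G)$ are all $\mathscr{F}_t$-measurable for $s\le t$, so the integrands defining the three summands of $L(u)$ are. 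It therefore suffices to estimate each summand in the above weighted norm.

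For the $F$-contribution I would use \eqref{eq:rmrk-ineqs-ineq2} of Remark \ref{rmrk:evol-ineqs} with $\theta=\theta_F$. Pick $\varepsilon>0$ so small that $a+\theta_F+\varepsilon<1$ (possible by \ref{asmp:Lip-asmpt-F}). Then
\[\|S(t,s)(F(s,u(s))-F(s,v(s)))\|_{\tilde{E}_a} \le C(t-s)^{-a-\theta_F-\varepsilon} L_F \|u(s)-v(s)\|_{\tilde{E}_a}.\]
Taking $L^p(\Omega)$-norms, applying Minkowski's inequality to move the norm through the $s$-integral, using $\|u(s)-v(s)\|_{L^p(\Omega;\tilde{E}_a)}\le e^{\beta s}\nn u-v\nn$ and multiplying by $e^{-\beta t}$, the substitution $\sigma=t-s$ transforms the kernel into $\sigma^{-a-\theta_F-\varepsilon}e^{-\beta\sigma}$, whose integral over $(0,\infty)$ equals $\Gamma(1-a-\theta_F-\varepsilon)\beta^{a+\theta_F+\varepsilon-1}$. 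Since this exponent of $\beta$ is negative, this term tends to $0$ as $\beta\to\infty$.

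The $B$-contribution is the main obstacle. The first summand of $S\diamond B$ carries the non-integrable singularity $(t-s)^{-1-a-\theta_B-\varepsilon}$ coming from the first inequality in Remark \ref{rmrk:evol-ineqs}, so integrability relies on a compensating $\sqrt{t-s}$ factor from the stochastic integral. Combining the type-$2$ UMD estimate \eqref{eq:JGest}, Minkowski's inequality (valid because $p\ge 2$), the Lipschitz bound \ref{asmp:Lip-asmpt-B} and the weighted control of $u-v$, one has
\[\|I(\one_{(s,t)}(B(\cdot,u)-B(\cdot,v)))\|_{L^p(\Omega;\tilde{E}_{-\theta_B})} \le CL_B\Bigl(\int_s^t e^{2\beta r}\,dr\Bigr)^{1/2}\nn u-v\nn \le CL_B\, e^{\beta t}\min\!\Bigl(\sqrt{t-s},\tfrac{1}{\sqrt{2\beta}}\Bigr)\nn u-v\nn.\]
Inserting this into the first summand of $S\diamond B$, picking $\varepsilon>0$ with $a+\theta_B+\varepsilon<1/2$ (possible by \ref{asmp:Lip-asmpt-B}), multiplying by $e^{-\beta t}$ and splitting the $\sigma=t-s$ integral at $\sigma_0=1/(2\beta)$, an elementary calculation bounds both pieces by $C\beta^{a+\theta_B+\varepsilon-1/2}$. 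The second summand $S(t,0)I(\one_{(0,t)}(B(\cdot,u)-B(\cdot,v)))$ is handled analogously: the estimate $\|S(t,0)\|_{\calL(\tilde{E}_{-\theta_B},\tilde{E}_a)}\le Ct^{-a-\theta_B-\varepsilon}$ from \eqref{eq:rmrk-ineqs-ineq2} combines with the same stochastic-integral bound and yields a contribution of the same order $\beta^{a+\theta_B+\varepsilon-1/2}$, which again tends to $0$ as $\beta\to\infty$.

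Summing the three estimates and choosing $\beta$ large enough produces \eqref{ineq:fixedpoint-arg-contraction}. For the growth estimate \eqref{ineq:fixedpoint-arg-contraction2} I would repeat the same computation with the linear growth conditions in \ref{asmp:Lip-asmpt-F}, \ref{asmp:Lip-asmpt-B} in place of the Lipschitz bounds, and use the elementary inequality $1+\|u(r)\|_{L^p(\Omega;\tilde{E}_a)}\le (1+\nn u\nn)e^{\beta r}$. The \emph{constant} part of this bound yields a contribution independent of $u$ and (for $\beta$ large) of $\beta$, forming the constant $C$; the $\nn u\nn$ part carries the same $\beta$-decay as in the contraction estimate and for $\beta$ large contributes at most $\tfrac12\nn u\nn$. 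The same argument simultaneously shows that $L$ maps $Z^p_a$ into itself.
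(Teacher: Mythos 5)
Your proposal is correct and follows essentially the same route as the paper: the exponentially weighted equivalent norm, the split of $S\diamond B$ into the singular-kernel term and the $S(t,0)I(\one_{(0,t)}\cdot)$ term, the crucial $\min(\sqrt{t-s},\beta^{-1/2})$ gain from \eqref{eq:JGest} plus Minkowski, and the limit $\beta\to\infty$. The only cosmetic differences are that the paper keeps the factor $(1-e^{-2\kappa\sigma})^{1/2}$ and rescales instead of splitting the integral at $1/(2\beta)$, and obtains \eqref{ineq:fixedpoint-arg-contraction2} by taking $v\equiv 0$ rather than redoing the estimate with the linear growth bounds.
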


\begin{proof}
Choose $\varepsilon>0$ so small that $a+\theta_F+\varepsilon<1$ and $a+\theta_B+\varepsilon<1/2$.
We will first prove several estimates for the individual parts of the mapping $L$. Note that the stochastic convolution term $S\diamond B(\cdot, U)$ is defined in a nonstandard way, and therefore we need to give all details in the proof below. Conclusions will be derived afterwards.

For $\kappa\geq 0$ arbitrary but fixed for the moment, define the equivalent norm on ${Z_\delta^p}$ by
\[\nn u\nn_{Z_\delta^p} = \sup_{t\in [0,T]} e^{-\kappa t} \|u(t)\|_{L^p(\Omega;\tilde{E}_\delta)}.\]
We also let
\[\nn G\nn_{Z^p_{\delta}(\gamma)} = \sup_{t\in [0,T]} e^{-\kappa t} \|G(t)\|_{L^p(\Omega;\gamma(H,\tilde{E}_{\delta}))}.\]

{\em Stochastic convolution:}
Let $G\in Z^p_{-\theta_B}$  be arbitrary. Clearly, we can write $\|S\diamond G(t)\|_{L^p(\Omega;\tilde{E}_a)}\leq T_1(t) + T_2(t)$,
where
\[T_1 (t):= \Big\|\int_0^t S(t,s)A(s)I(\one_{(s,t)} G)\;ds\Big\|_{L^p(\Omega;\tilde{E}_a)},\]
and $T_2(t)= \|S(t,0)I(\one_{(0,t)} G)\|_{L^p(\Omega;\tilde{E}_a)}$.
To estimate $T_1$ note that by Remark \ref{rmrk:evol-ineqs}
\begin{align*}
T_1(t)&\leq C\int_0^t (t-s)^{-1-a-\theta_B-\varepsilon}  \|I(\one_{(s,t)} G)\|_{L^p(\Omega;\tilde{E}_{-\theta_B})} \;ds.
\end{align*}
By \ref{hpths:unifrm-const}, $C$ is independent of $\omega$. By \eqref{eq:JGest} and Minkowski's inequality we have
\begin{equation}\label{eq:stochinkappaest}
\begin{aligned}
\|I(\one_{(s,t)} G)\|_{L^p(\Omega;\tilde{E}_{-\theta_B})} & \leq C\|G\|_{L^p(\Omega;L^2(s,t;\g(H,\tilde{E}_{-\theta_B})))}
\\ & \leq C\|G\|_{L^2(s,t;L^p(\Omega;\g(H,\tilde{E}_{-\theta_B})))}
\\ & \leq C\Big(\int_s^t e^{2\kappa\sigma} \, d\sigma\Big)^{1/2} \nn G\nn_{Z^p_{-\theta_B}(\gamma)}
\\ & = C\kappa^{-1/2} (e^{2\kappa t} - e^{2\kappa s})^{1/2} \nn G\nn_{Z^p_{-\theta_B}(\gamma)}.
\end{aligned}
\end{equation}
Therefore, we find that
\begin{align*}
\sup_{t\in[0,T]} &e^{-\kappa t}T_1(t) \\& \leq C \sup_{t\in[0,T]}\int_0^t (t-s)^{-1-a-\theta_B-\varepsilon} \big(\kappa^{-1}(1 - e^{-2\kappa(t- s)})\big)^{1/2} \, ds \nn G\nn_{Z^p_{-\theta_B}(\gamma)}
\\ & = C \sup_{t\in[0,T]}\int_0^t \sigma^{-1-a-\theta_B-\varepsilon}  \kappa^{-1/2} (1 - e^{-2\kappa \sigma})^{1/2} \, d\sigma
\nn G\nn_{Z^p_{-\theta_B}(\gamma)}
\\ & \leq C \phi_2(\kappa) \nn G\nn_{Z^p_{-\theta_B}(\gamma)},
\end{align*}
where $\phi_2$ is given by
\[\phi_2(\kappa) = \kappa^{-\frac12+a+\theta_B+\varepsilon} \int_0^\infty \sigma^{-1-a-\theta_B-\varepsilon}  (1 - e^{-2\sigma})^{1/2}\, d\sigma.\]
Since $a+\theta_B+\varepsilon<\frac12$, the latter is finite. Moreover, $\lim_{\kappa\to \infty}\phi_2(\kappa) = 0$.

To estimate $T_2(t)$ note that by Remark \ref{rmrk:evol-ineqs}, \ref{hpths:unifrm-const} and \eqref{eq:stochinkappaest} with $s=0$,
\begin{align*}
T_2(t) & \leq  C t^{-a-\theta_B-\varepsilon} \|I(\one_{(0,t)} G)\|_{L^p(\Omega;\tilde{E}_{-\theta_B})}
\\ & \leq C t^{-a-\theta_B-\varepsilon} \kappa^{-1/2} (e^{2\kappa t} -1)^{1/2} \nn G\nn_{Z^p_{-\theta_B}(\gamma)}.
\end{align*}
Therefore, using $\sup_{\sigma\geq 0} \sigma^{-a-\theta_B-\varepsilon} (1-e^{-2\sigma})^{1/2}<\infty$, we find that
\begin{align*}
\sup_{t\in[0,T]} e^{-\kappa t}T_2(t) \leq C \kappa^{-\frac12+a+\theta_B+\varepsilon} \nn G\nn_{Z^p_{-\theta_B}(\gamma)}.
\end{align*}
Combining the estimate for $T_1$ and $T_2$ we find that
\begin{equation}\label{eq:estGconv}
\nn S\diamond G\nn_{Z^p_a} \leq C \phi_3(\kappa) \nn G\nn_{Z^p_{-\theta_B}(\gamma)},
\end{equation}
where $\phi_3(\kappa)\to 0$ if $\kappa\to \infty$.

Now let $u,v\in Z^p_a$. By the hypothesis \ref{asmp:Lip-asmpt-B}, $B(\cdot, u)$ and $B(\cdot, v)$ are in $Z^p_{-\theta_B}$ and therefore, by the above we find that $S\diamond B(\cdot,u)$ and $S\diamond B(\cdot,v)$ are in $Z^p_a$ again. Moreover, applying \eqref{eq:estGconv} with $G = B(\cdot, u) - B(\cdot, v)$ it follows that
\begin{align*}
\nn S\diamond B(\cdot, u) - S\diamond B(\cdot, v)\nn_{Z^p_a}& \leq C \phi_3(\kappa) \nn B(\cdot, u) - B(\cdot, v)\nn_{Z^p_{-\theta_B}(\gamma)}\\ & \leq C\phi_3(\kappa) L_B \nn u-v\nn _{Z^p_a}.
\end{align*}

\textit{Deterministic convolution:} Let $f\in Z^p_{-\theta_F}$.
By Remark \ref{rmrk:evol-ineqs} applied pathwise and \ref{hpths:unifrm-const} one obtains
\begin{align*}
\|S * f(t)\|_{\tilde{E}_{a}}\leq C\int_0^t (t-\sigma)^{-a-\theta_F-\varepsilon} \|f(\sigma)\|_{\tilde{E}_{-\theta_F}}  \,  d\sigma, \ \ t\in [0,T]
\end{align*}
where $C$ is independent of $\omega$. In particular, taking $L^p(\Omega)$-norms on both sides we find that
\begin{align*}
\|S * f(t)\|_{L^p(\Omega;\tilde{E}_{a})}& \leq C\int_0^t (t-\sigma)^{-a-\theta_F-\varepsilon} \|f(\sigma)\|_{L^p(\Omega;\tilde{E}_{-\theta_F})}  \,  d\sigma.
\end{align*}
Using $e^{-\kappa t} = e^{-\kappa (t-\sigma)} e^{-\kappa \sigma}$, it follows that
\begin{equation}\label{eq:estfLp}
\begin{aligned}
\nn S * f\nn_{Z^p_a}& \leq C\nn f\nn_{Z^p_{-\theta_F}}\sup_{t\in [0,T]} \int_0^t e^{-\kappa(t-\sigma)}(t-\sigma)^{-a-\theta-\varepsilon} \,  d\sigma
\\&\leq C\phi_1(\kappa) \nn f\nn_{Z^p_{-\theta_F}},
\end{aligned}
\end{equation}
where
\[\phi_1(\kappa)=\int_0^\infty e^{-\kappa\sigma} \sigma^{-a-\theta-\varepsilon} \,  d\sigma.\]
Clearly, $\lim_{\kappa\to \infty}\phi_1(\kappa) = 0$.

Now let $u,v\in Z^p_a$. By the hypothesis \ref{asmp:Lip-asmpt-F}, $F(\cdot, u)$ and $F(\cdot, v)$ are in $Z^p_{-\theta_F}$ and therefore, we find that  $S*F(\cdot,u)$ and $S*F(\cdot,v)$ are in $Z^p_a$ again. Moreover, applying \eqref{eq:estfLp} with $f = F(\cdot, u) - F(\cdot, v)$ it follows that
\begin{align*}
\|S*F(\cdot, u) - S*F(\cdot, v)\|_{Z^p_a}& \leq
C\phi_1(\kappa) \nn F(\cdot, u) - F(\cdot, v)\nn_{Z^p_{-\theta_F}}  \,  d\sigma
\\ & \leq C \phi_1(\kappa) L_F \nn u-v\nn_{Z^p_a}.
\end{align*}

\textit{Conclusion.}
From the above computations, it follows that $L$ is a bounded operator on $Z^p_a$. Moreover, for all $u,v \in Z^p_a$,
\[ \nn L(u)-L(v) \nn_{Z^p_a} \leq C(L_F \phi_1(\kappa) + L_B \phi_3(\kappa)) \nn u-v\nn_{Z^p_a}.\]
Choosing $\kappa$ large enough, the result follows. Also, \eqref{ineq:fixedpoint-arg-contraction2} follows by taking $v \equiv 0$.
\end{proof}

As a consequence we obtain the following result.
\begin{theorem}\label{thm:existenceL^pcond}
Assume \ref{asmp:A-mble-adptd}--\ref{hpths:unifrm-const}, \ref{asmp:Lip-asmpt-F} and \ref{asmp:Lip-asmpt-B}. Let $p\in (2,\infty)$.
Let $\delta, \lambda > 0$ be such that $a+\delta + \lambda < \min\{\tfrac12 - \theta_B, 1-\theta_F,\eta_+\}$. If the process $t\mapsto S(t,0) u_0$ is in $Z^p_a$, then there exists a unique pathwise mild solution $U\in Z^p_a$  of \eqref{defn:SEE2}. Moreover, $U - S(t,0) u_0\in L^p(\Omega;W^{\lambda,p}(0,T;\tilde{E}_\delta))$ and there is a constant independent of $u_0$ such that
\begin{align}\label{eq:regest}
\|U-S(t,0) u_0\|_{L^p(\Omega;W^{\lambda,p}(0,T;\tilde{E}_{a+\delta}))} \leq C(1+ \|t \mapsto S(t,0) u_0\|_{Z^p_a}).
\end{align}
\end{theorem}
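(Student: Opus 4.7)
The plan is to obtain existence and uniqueness by a Banach fixed point argument on $Z^p_a$ and then to derive the Sobolev regularity of $U - S(\cdot,0)u_0$ by inserting the fixed point into the pathwise regularity estimates of Theorems \ref{thm:F-Holder} and \ref{thm:SP-Holder-ctu}.

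First I would invoke Lemma \ref{lemma:fixedpoint-arg}: since $t\mapsto S(t,0)u_0 \in Z^p_a$, the map $L$ of \eqref{defn:oprtr-L} sends $Z^p_a$ into itself, and there is a $\kappa$ large enough so that $L$ is a $\tfrac12$-Lipschitz contraction with respect to the equivalent norm $\nn\cdot\nn_{Z^p_a}$. The Banach contraction principle then yields a unique fixed point $U \in Z^p_a$, which by definition is the unique pathwise mild solution of \eqref{defn:SEE2}. The estimate \eqref{ineq:fixedpoint-arg-contraction2} combined with $\nn L(U)\nn=\nn U\nn$ gives the a priori bound
\[
\nn U\nn_{Z^p_a} \leq 2C + 2\nn t\mapsto S(t,0)u_0\nn_{Z^p_a},
\]
which in particular controls $\|F(\cdot,U)\|_{L^p(\Omega;L^p(0,T;\tilde E_{-\theta_F}))}$ and $\|B(\cdot,U)\|_{L^p(\Omega;L^p(0,T;\gamma(H,\tilde E_{-\theta_B})))}$ via \ref{asmp:Lip-asmpt-F} and \ref{asmp:Lip-asmpt-B} (with constants uniform in $\omega$ thanks to \ref{hpths:unifrm-const}).

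For the regularity estimate I would write $U - S(\cdot,0)u_0 = S*F(\cdot,U) + S\diamond B(\cdot,U)$ and treat the two terms separately. Apply Theorem \ref{thm:F-Holder} with $\theta=\theta_F$: since $a+\delta+\lambda < 1-\theta_F$ and $a+\delta+\lambda<\eta_+$, the theorem gives $S*F(\cdot,U) \in W^{\lambda,p}(0,T;\tilde E_{a+\delta})$ pathwise with norm bounded by a constant times $\|F(\cdot,U)\|_{L^p(0,T;\tilde E_{-\theta_F})}$; taking $L^p(\Omega)$-norms and using \ref{asmp:Lip-asmpt-F} together with the $Z^p_a$-bound on $U$ produces the desired estimate. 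For the stochastic piece, fix $\alpha\in(1/p,\,1/2)$ with $\alpha>\lambda+\delta+\theta_B$, which is possible since $p>2$ and $\delta+\lambda+\theta_B< \tfrac12$. Proposition \ref{prop:type2prop} shows that $J(B(\cdot,U)) \in L^p(\Omega;W^{\alpha,p}(0,T;\tilde E_{-\theta_B}))$ with norm controlled by the $Z^p_a$-norm of $U$, and the continuous version satisfies $J(B(\cdot,U))(0)=0$. Both parts of Theorem \ref{thm:SP-Holder-ctu} then apply to the decomposition $S\diamond B(\cdot,U) = -\zeta + \tilde\zeta$ with $f=J(B(\cdot,U))$, yielding the corresponding $W^{\lambda,p}(0,T;\tilde E_{a+\delta})$ estimate pathwise, and hence after taking $L^p(\Omega)$-norms.

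Combining the two bounds gives \eqref{eq:regest}. The only delicate points in executing the plan are: (i) making sure the constants produced by the regularity theorems are uniform in $\omega$, which is precisely the role of \ref{hpths:unifrm-const} (since all constants in Sections \ref{eq:SEF} and \ref{sec:pathwisereg} depend on $L(\omega)$); and (ii) choosing $\alpha$ in the right open interval so that both $\alpha>1/p$ (needed for the $\tilde\zeta$-estimate in Theorem \ref{thm:SP-Holder-ctu}(2)) and $\alpha-\theta_B>\delta+\lambda$ (needed for the $\zeta$-estimate in Theorem \ref{thm:SP-Holder-ctu}(1)) are simultaneously satisfied, which is the reason the restriction $a+\delta+\lambda<\tfrac12-\theta_B$ appears. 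No new obstacle beyond Lemma \ref{lemma:fixedpoint-arg} and the pathwise regularity theorems needs to be overcome.
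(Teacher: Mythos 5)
Your proposal is correct and follows essentially the same route as the paper: Banach's fixed point theorem via Lemma \ref{lemma:fixedpoint-arg} plus the a priori bound from \eqref{ineq:fixedpoint-arg-contraction2}, then Theorem \ref{thm:F-Holder} for $S*F(\cdot,U)$ and the combination of Proposition \ref{prop:type2prop} with Theorem \ref{thm:SP-Holder-ctu} for $S\diamond B(\cdot,U)$ (the paper merely cites this combination in its packaged form, Theorem \ref{thm:pathregularity}, with $\alpha\in(a+\delta+\theta_B+\lambda,\tfrac12)$). The only slip is that your condition on $\alpha$ should read $\alpha>a+\lambda+\delta+\theta_B$ rather than $\alpha>\lambda+\delta+\theta_B$, since the target space is $\tilde{E}_{a+\delta}$; this is harmless because you correctly identify $a+\delta+\lambda<\tfrac12-\theta_B$ as the hypothesis that makes the choice possible.
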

Of course by Sobolev embedding \eqref{eq:fractionalSobolevembReprForm} one can further deduce H\"older regularity of the solution.

\begin{proof}
By Lemma \ref{lemma:fixedpoint-arg} there exists a unique fixed point $U\in Z^p_a$ of $L$. Clearly, this implies that $U$ is a pathwise mild solution of \eqref{defn:SEE2}. Moreover, from \eqref{ineq:fixedpoint-arg-contraction2} we deduce that
\[\nn U\nn_{Z^p_a}\leq 2C+ 2\nn t \mapsto S(t,0) u_0\nn_{Z^p_a}.\]
Next we prove the regularity assertion. From Theorem \ref{thm:F-Holder}, \ref{asmp:Lip-asmpt-F} and the previous estimate we see that:
\begin{align*}
\|S*F(\cdot,U)\|_{L^p(\Omega;W^{\lambda,p}(0,T;\tilde{E}_{a+\delta}))} & \leq C\|F(\cdot,U)\|_{L^p((0,T)\times\Omega;\tilde{E}_{-\theta_F})}
\\ & \leq C(1+\|U\|_{L^p((0,T)\times\Omega;\tilde{E}_{a})})
\\ & \leq C(1+ \nn U\nn_{Z^p_a})\leq C (1+\nn t \mapsto S(t,0) u_0\nn_{Z^p_a}).
\end{align*}
Similarly, by Theorem \ref{thm:pathregularity} (with $\alpha\in (a+\delta+\theta_B+\lambda,\tfrac12)$), Proposition \ref{prop:type2prop} and \ref{asmp:Lip-asmpt-B}
\begin{align*}
\|S\diamond B(\cdot,U)\|_{L^p(\Omega;W^{\lambda,p}(0,T;\tilde{E}_{a+\delta}))} & \leq C\|J(B(\cdot,U))\|_{L^p(\Omega;W^{\alpha,p}(0,T;\tilde{E}_{-\theta_B}))}
\\ & \leq C\|B(\cdot,U)\|_{L^p(\Omega\times(0,T);\gamma(H,\tilde{E}_{-\theta_B}))}
\\ & \leq C(1+\|U\|_{L^p((0,T)\times\Omega;\tilde{E}_{a})})
\\ & \leq C(1+ \nn U\nn_{Z^p_a})\leq C (1+\nn t \mapsto S(t,0) u_0\nn_{Z^p_a}).
\end{align*}
Now \eqref{eq:regest} follows since $U-S(t,0) u_0 = S*F(\cdot,U) + S\diamond B(\cdot,U)$.
\end{proof}

One can extend the above existence and uniqueness result to the situation where $u_0: \Omega \to \tilde{E}_a$ is merely $\mathscr{F}_0$-measurable. For that, we will continue with a local uniqueness property that will be used frequently.
\begin{lemma}\label{lemma:as-equality}
Assume \ref{asmp:A-mble-adptd}--\ref{hpths:unifrm-const}, \ref{asmp:Lip-asmpt-F} and \ref{asmp:Lip-asmpt-B}. Let $\tilde{A}$ be another operator satisfying \ref{asmp:A-mble-adptd}, \ref{hpths:ATplus}, \ref{hpths:interpol-spce-wk} and \ref{hpths:unifrm-const} with the same spaces $(\tilde{E}_{\eta})_{-\eta_0<\eta\leq\eta_+}$ and let the evolution family generated by $\tilde{A}$ be denoted by $(\tilde{S}(t,s))_{0\leq s\leq t\leq T}$. Let $u_0,\tilde{u}_0:\Omega\to E_{a,1}^0$ be $\F_0$-measurable and such that $S(t,0)u_0, \tilde{S}(t,0)\tilde{u}_0\in Z^p_a$. Let $\tilde{L}$ be defined as $L$, but with $({S}(t,s))_{0\leq s\leq t\leq T}$ and $u_0$ replaced by  $(\tilde{S}(t,s))_{0\leq s\leq t\leq T}$  and $\tilde{u}_0$, respectively. Let $\Gamma \in \F_0$ and let $\tau$ be a stopping time.

Suppose for almost all $\omega\in \Gamma$ and all $t\in [0,\tau(\omega)]$, $A(t,\omega) = \tilde{A}(t,\omega)$ and $u_0(\omega) = \tilde{u}_0(\omega)$.
If $U,\tilde{U}\in Z^p_a$ are such that
\[\one_{\Gamma}\one_{[0,\tau]}  U = \one_{\Gamma}\one_{[0,\tau]}  L(U), \ \ \text{and} \ \ \one_{\Gamma}\one_{[0,\tau]} \tilde{U} = \one_{\Gamma}\one_{[0,\tau]} \tilde{L}(\tilde{U}),\]
then for almost all $\omega\in \Gamma$ and all $t\in [0,\tau(\omega)]$ one has $U(t) = \tilde{U}(t)$.
\end{lemma}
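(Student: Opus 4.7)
The plan is to show that the localized difference $V := \one_\Gamma \one_{[0, \tau]}(U - \tilde{U})$, viewed as an element of $Z^p_a$, satisfies a self-referential integral identity to which the contraction argument of Lemma \ref{lemma:fixedpoint-arg} applies, forcing $V = 0$. The first ingredient is that the two evolution families coincide on the random set $\{(s, t) : 0 \leq s \leq t \leq \tau(\omega)\}$ for $\omega \in \Gamma$; this holds because the Acquistapace--Terreni construction of Theorem \ref{thrm:exist-evol} is pointwise in $\omega$ and $S(t, s, \omega)$ is determined by $A(\cdot, \omega)|_{[s, t]}$, which equals $\tilde{A}(\cdot, \omega)|_{[s, t]}$ by hypothesis. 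Combined with $u_0 = \tilde{u}_0$ on $\Gamma$ and subtraction of the two fixed-point identities $\one_\Gamma \one_{[0, \tau]}(U - L(U)) = \one_\Gamma \one_{[0, \tau]}(\tilde{U} - \tilde{L}(\tilde{U})) = 0$, this yields, for a.e.\ $\omega \in \Gamma$ and every $t \in [0, \tau(\omega)]$,
\[(U - \tilde{U})(t) = \int_0^t S(t, s)[F(s, U(s)) - F(s, \tilde{U}(s))]\, ds + S \diamond [B(\cdot, U) - B(\cdot, \tilde{U})](t).\]

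The crucial second step is to replace the integrands by their $\one_\Gamma \one_{[0, \tau]}$-restrictions before taking norms. Since $\one_\Gamma \in \F_0$, it factors through any It\^o integral, and optional stopping yields $\one_{\{t \leq \tau\}} I(\one_{(s, t)} G) = \one_{\{t \leq \tau\}} I(\one_{(s, t)} \one_{[0, \tau]} G)$. Thus, setting $f := \one_\Gamma \one_{[0, \tau]}(F(\cdot, U) - F(\cdot, \tilde{U}))$ and $g := \one_\Gamma \one_{[0, \tau]}(B(\cdot, U) - B(\cdot, \tilde{U}))$, the above identity still holds on $\Gamma \cap [0, \tau]$ with the two differences replaced by $f$ and $g$. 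By \ref{asmp:Lip-asmpt-F} and \ref{asmp:Lip-asmpt-B}, $\|f(s)\|_{\tilde{E}_{-\theta_F}} \leq L_F \|V(s)\|_{\tilde{E}_a}$ and $\|g(s)\|_{\gamma(H, \tilde{E}_{-\theta_B})} \leq L_B \|V(s)\|_{\tilde{E}_a}$.

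Since $V$ vanishes off $\Gamma \cap [0, \tau]$, one has $\|V(t)\|_{L^p(\Omega; \tilde{E}_a)} \leq \|(S \ast f)(t)\|_{L^p(\Omega; \tilde{E}_a)} + \|(S \diamond g)(t)\|_{L^p(\Omega; \tilde{E}_a)}$. Applying the bounds for $S \ast f$ and $S \diamond g$ derived inside the proof of Lemma \ref{lemma:fixedpoint-arg} under the weighted norm $\nn \cdot \nn_{Z^p_a}$ produces $\nn V \nn_{Z^p_a} \leq C(L_F \phi_1(\kappa) + L_B \phi_3(\kappa)) \nn V \nn_{Z^p_a}$ with $\phi_1(\kappa), \phi_3(\kappa) \to 0$ as $\kappa \to \infty$, so choosing $\kappa$ large enough gives $V = 0$ in $Z^p_a$, and the conclusion of the lemma follows after passing to continuous modifications, which exist by the regularity in Theorem \ref{thm:existenceL^pcond}.

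The main obstacle lies in the second step: one must carefully commute the pathwise localizer $\one_\Gamma \one_{[0, \tau]}$ past the stochastic convolution $S \diamond$, since the It\^o integral mixes information across $\omega$. Without combining the $\F_0$-measurability of $\Gamma$ (allowing $\one_\Gamma$ to factor through the integral) and the optional stopping property for $\tau$, the Lipschitz estimates would involve $\|U - \tilde{U}\|$ over all of $\Omega$, and no closed contraction inequality for $V$ alone could be derived.
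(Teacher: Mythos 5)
Your proposal is correct and follows essentially the same route as the paper: both arguments rest on the pathwise coincidence of the evolution families on $\Gamma$ up to $\tau$, the local property of the stochastic (forward) integral to commute the localizer $\one_\Gamma\one_{[0,\tau]}$ past the convolutions $S*$ and $S\diamond$, and the contraction estimate of Lemma \ref{lemma:fixedpoint-arg} in the weighted norm $\nn\cdot\nn_{Z^p_a}$. The only cosmetic difference is that the paper applies the contraction to the two localized processes $V=\one_\Gamma\one_{[0,\tau]}U$ and $\tilde V=\one_\Gamma\one_{[0,\tau]}\tilde U$ via the already-fixed norm of \eqref{ineq:fixedpoint-arg-contraction}, whereas you subtract first and re-derive the bound for the difference.
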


\begin{proof}
First we claim that for all $u\in Z^p_a$ one has
\begin{equation}\label{eq:identityL}
\one_{\Gamma}\one_{[0,\tau]} L(u) = \one_{\Gamma}\one_{[0,\tau]} L(v) = \one_{\Gamma}\one_{[0,\tau]} \tilde{L}(v) = \one_{\Gamma}\one_{[0,\tau]} \tilde{L}(u),
\end{equation}
where $v = \one_{\Gamma}\one_{[0,\tau]} u$. Indeed, by the (pathwise) uniqueness of the evolution family one has almost surely on $\Gamma$ for all $0\leq s\leq t\leq \tau$, $S(t,s) = \tilde{S}(t,s)$. Now the identity \eqref{eq:identityL} can be verified for each of the terms in $L$ and $\tilde{L}$.

For instance for the first part of stochastic convolution term one has
\begin{equation}\label{eq:stochlocal}
\begin{aligned}
\one_{\Gamma}\one_{[0,\tau]}(t)  & \int_0^t S(t,s) A(s) I(\one_{[s,t]}B(\cdot,u)) \, ds  \\ & = \one_{\Gamma}\one_{[0,\tau]}(t)  \int_0^t \one_{\Gamma}\one_{s\leq t\leq \tau}  S(t,s) A(s) I(\one_{[s,t]}B(\cdot,u)) \, ds.
\end{aligned}
\end{equation}
Now $\one_{\Gamma}\one_{s\leq t\leq \tau} S(t,s) = \one_{\Gamma}\one_{s\leq t\leq \tau} \tilde{S}(t,s)$, so we can replace $S$ by $\tilde{S}$ on the right-hand side of \eqref{eq:stochlocal}. Moreover, using a property of the forward integral \cite[Lemma 3.3]{PV-forward} (or the local property of the stochastic integral) one sees
\begin{align*}
\one_{\Gamma}\one_{s\leq t\leq \tau} I(\one_{[s,t]}B(\cdot,u)) & = I^{-}(\one_{\Gamma}\one_{s\leq t\leq \tau}  \one_{[s,t]}B(\cdot,u))
\\ & = I^{-}(\one_{\Gamma}\one_{s\leq t\leq \tau}  \one_{[s,t]}B(\cdot,v))
\\ & = \one_{\Gamma} \one_{[0,\tau]}(s)  I(\one_{[s,t]}B(\cdot,v)).
\end{align*}
Thus we can replace $u$ by $v$ on the right-hand side of \eqref{eq:stochlocal}.

We will now show how the statement of the lemma follows. Writing $V = \one_{\Gamma}\one_{[0,\tau]} U$ and $\tilde{V} = \one_{\Gamma}\one_{[0,\tau]} \tilde{U}$, it follows from the assumption, \eqref{eq:identityL} and Lemma \ref{lemma:fixedpoint-arg} that
\begin{align*}
\nn V- \tilde{V}\nn_{Z^p_a} &= \nn \one_{\Gamma}\one_{[0,\tau]}(L(U) - \tilde{L}(\tilde{U}))\nn_{Z^p_a}
\\  &= \nn \one_{\Gamma}\one_{[0,\tau]} (L(V) - L(\tilde{V}))\nn_{Z^p_a}\\
  &\leq \nn L(V) - L(\tilde{V})\nn_{Z^p_a}\\
 &\leq \frac12\nn V- \tilde{V}\nn_{Z^p_a}.
\end{align*}
Therefore, $V =  \tilde{V}$ in $Z^p_a$. Since by Theorem \ref{thm:existenceL^pcond} and Sobolev embedding, $U - S(\cdot,0)u_0$ and $\tilde{U}- \tilde{S}(\cdot,0)\tilde{u}_0$ have continuous paths, it follows that a.s.\ for all $t\in [0,T]$, $V(t) = \tilde{V}(t)$. This implies the required result.
\end{proof}

We combine the previous lemma with a localization argument to obtain the main result of this subsection.
\begin{theorem}\label{thrm:exst-mild-sol:initialcondgeneral, smalltime}
Assume \ref{asmp:A-mble-adptd}--\ref{hpths:unifrm-const}, \ref{asmp:Lip-asmpt-F} and \ref{asmp:Lip-asmpt-B}.
Let $\delta, \lambda > 0$ be such that $a+\delta + \lambda < \min\{\tfrac12 - \theta_B, 1-\theta_F,\eta_+\}$.
If $u_0:\Omega\to E_0$ is $\F_0$-measurable and $u_0\in E_{a,1}^0$ a.s., then the following holds:
\begin{enumerate}
\item There exists a unique adapted pathwise mild solution $U$ of \eqref{defn:SEE2} that belongs to $L^0(\Omega;C([0,T];\tilde{E}_a))$ of \eqref{defn:SEE2}. Moreover, $U- S(t,0) u_0$ belongs to $L^0(\Omega;C^{\lambda}(0,T;\tilde{E}_{a+\delta}))$.
\item If also $u_0\in E_{a+\beta,1}^0$ a.s.\ with $\lambda+\delta<\beta$, then $U\in L^0(\Omega;C^{\lambda}(0,T;\tilde{E}_{a+\delta}))$.
\end{enumerate}
\end{theorem}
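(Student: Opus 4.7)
The plan is to reduce to Theorem~\ref{thm:existenceL^pcond} by truncating the initial datum so as to restore the boundedness needed to place $S(\cdot,0)u_0$ in $Z^p_a$, solve on each truncated piece, and then patch via the local uniqueness from Lemma~\ref{lemma:as-equality}.

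\textbf{Step 1 (Localization of $u_0$).} For $n\in\NN$ set
\[ \Omega_n := \{\omega\in\Omega:\ \|u_0(\omega)\|_{E^0_{a,1}} \leq n\}, \qquad u_0^{(n)} := \one_{\Omega_n} u_0. \]
Then $\Omega_n\in\F_0$, $\Omega_n\uparrow\Omega$ (up to a null set, by hypothesis), and $u_0^{(n)}\in L^\infty(\Omega;E^0_{a,1})$. From the embedding $E^0_{a,1}\hookrightarrow \tilde{E}_a$ in \ref{hpths:interpol-spce-wk} and \eqref{thrm-evolsys-frct-ineq1} (applied with $\alpha=\beta=a$, $s=0$), the process $t\mapsto S(t,0) u_0^{(n)}$ belongs to $Z^p_a$; under \ref{hpths:unifrm-const} the $Z^p_a$-norm is controlled by $n$ uniformly in $\omega$. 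Fix a large enough $p\in(2,\infty)$ so that $\tfrac1p<\min\{\tfrac12-\theta_B,1-\theta_F\}-(a+\delta+\lambda)$, which will let us promote Sobolev regularity to H\"older regularity via \eqref{eq:fractionalSobolevembReprForm}.

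\textbf{Step 2 (Existence for truncated data).} Theorem~\ref{thm:existenceL^pcond} applied to the initial value $u_0^{(n)}$ yields a unique pathwise mild solution $U_n\in Z^p_a$ of \eqref{defn:SEE2} with
\[ U_n - S(\cdot,0)u_0^{(n)} \in L^p(\Omega;W^{\lambda,p}(0,T;\tilde{E}_{a+\delta})). \]
By the Sobolev embedding \eqref{eq:fractionalSobolevembReprForm} (using the choice of $p$ above), this difference has a continuous version in $C^{\lambda}(0,T;\tilde{E}_{a+\delta})$ almost surely. Since the paths of $t\mapsto S(t,0)u_0^{(n)}$ are continuous into $\tilde{E}_a$ (use strong continuity of the evolution family together with the embedding from \ref{hpths:interpol-spce-wk}), we get $U_n\in L^0(\Omega;C([0,T];\tilde{E}_a))$.

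\textbf{Step 3 (Consistency and pasting).} Since $u_0^{(n+1)}=u_0^{(n)}$ on $\Omega_n$, Lemma~\ref{lemma:as-equality} (with $\tilde{A}=A$, $\Gamma=\Omega_n$, $\tau\equiv T$) gives $U_{n+1}=U_n$ a.s.\ on $\Omega_n$ for all $t\in[0,T]$. Hence we may define $U$ unambiguously by setting $U:=U_n$ on $\Omega_n$. The process $U$ is adapted (as a pointwise limit of adapted processes), lies in $L^0(\Omega;C([0,T];\tilde{E}_a))$, and satisfies \eqref{eqn:reprsoldef} a.s.\ by restriction. For uniqueness, if $\widetilde{U}$ is another pathwise mild solution in $L^0(\Omega;C([0,T];\tilde{E}_a))$, apply Lemma~\ref{lemma:as-equality} again on each $\Omega_n$ (after truncating $\widetilde{U}$ into $Z^p_a$ via stopping times of the form $\sigma_k=\inf\{t:\|\widetilde{U}(t)\|_{\tilde{E}_a}\geq k\}$) to deduce $\widetilde U = U$ on $\Omega_n\cap\{t\leq \sigma_k\}$ and then let $k,n\to\infty$. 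The claim $U-S(\cdot,0)u_0\in L^0(\Omega;C^{\lambda}(0,T;\tilde{E}_{a+\delta}))$ now follows from the analogous statement for each $U_n$ together with $u_0=u_0^{(n)}$ on $\Omega_n$.

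\textbf{Step 4 (Part (2): improved regularity when $u_0\in E^0_{a+\beta,1}$).} Under the stronger hypothesis, localize again via $\Omega_n^\beta:=\{\|u_0\|_{E^0_{a+\beta,1}}\leq n\}$ and repeat Steps~1--3 with this truncation (the previous solution $U$ coincides with the new one by uniqueness). Lemma~\ref{lemma:interpol}, applied with $\alpha=a+\beta$ (or $\alpha=\eta_+$ if $a+\beta\geq\eta_+$, using $E^0_{a+\beta,1}\hookrightarrow E^0_{\eta_+,1}$), target exponent $a+\delta$ and H\"older exponent $\gamma=\lambda$ (permissible since $\lambda+\delta<\beta$), yields
\[ \|S(t,0)u_0-S(s,0)u_0\|_{\tilde{E}_{a+\delta}} \leq C\,(t-s)^{\lambda}\,\|u_0\|_{E^0_{a+\beta,1}} \quad \text{a.s.}, \]
so $t\mapsto S(t,0)u_0\in C^{\lambda}(0,T;\tilde{E}_{a+\delta})$ a.s. Adding this to the regularity of $U-S(\cdot,0)u_0$ from part~(1) gives $U\in L^0(\Omega;C^{\lambda}(0,T;\tilde{E}_{a+\delta}))$.

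The main obstacle is the uniqueness/consistency of the patched solutions (Step~3): the delicate point is that the truncation destroys exact $Z^p_a$ membership of $\widetilde U$ globally, so we must combine the $\F_0$-localization $\Omega_n$ with a further stopping-time localization and reuse Lemma~\ref{lemma:as-equality}; once this patching is set up, parts~(1) and (2) follow by assembling the already-established regularity results.
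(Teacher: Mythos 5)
Your proposal is correct and follows essentially the same route as the paper's proof: truncate $u_0$ on the $\F_0$-sets $\{\|u_0\|_{E^0_{a,1}}\leq n\}$, apply Theorem~\ref{thm:existenceL^pcond} with $p$ chosen large enough to invoke the Sobolev embedding \eqref{eq:fractionalSobolevembReprForm}, paste the solutions together via the local uniqueness of Lemma~\ref{lemma:as-equality}, and prove uniqueness by a further stopping-time truncation; part (2) is likewise obtained from Lemma~\ref{lemma:interpol} exactly as in the paper.
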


Note that because of the above result we can also view $L$ as a mapping from the subspace of adapted processes in $L^0(\Omega;C([0,T];\tilde{E}_a))$ into itself.

\begin{proof}
Choose $p\in (2, \infty)$ so large that $a+\delta + \lambda +\frac1p < \min\{\tfrac12 - \theta_B, 1-\theta_F,\eta_+\}$.

\textit{Existence}. First observe that $\|u_0\|_{E_{a,1}^0}$ is $\F_0$-measurable. Moreover, by Lemma \ref{lemma:interpol}, $t\mapsto S(t,0) u_0\in \tilde{E}_a$ has continuous paths.
Define $u_n = u_0\one_{\{\|u_0\|_{E_{a,1}^0}\leq n\}}$. Then $u_n$ is $\mathscr{F}_0$-measurable, and $t\mapsto S(t,0) u_n\in \tilde{E}_a$ has continuous paths and
\[\E\sup_{t\in [0,T]}\|S(t,0) u_n\|_{\tilde{E}_a}^p <\infty.\]
Hence by Theorem \ref{thm:existenceL^pcond}, problem \eqref{defn:SEE2} with initial condition $u_n$ admits a unique pathwise mild solution $U_n \in L^p(\Omega\times [0,T];\tilde{E}_a)$. Moreover, by Theorem \ref{thm:existenceL^pcond} and \eqref{eq:fractionalSobolevembReprForm} there exists a version of $U_n$ such that $U_n - S(t,0) u_n$ has paths in \[W^{\lambda+\frac1p,p}([0,T];\tilde{E}_{a+\delta}) \hookrightarrow C^{\lambda}([0,T];\tilde{E}_{a+\delta}).\]
In particular, $U_n$ has paths in $C([0,T];\tilde{E}_{a})$.
Moreover, almost surely on $\{\|u_0\|_{E_{a,1}^0}\leq m\}$, for all $t\in [0,T]$, $U_n(t) = U_m(t)$ when $n\geq m$. It follows that almost surely for all $t\in [0,T]$, the limit $\lim_{n\to\infty} U_n(t)$ exists in $\tilde{E}_a$. Define $U: \Omega\times [0,T] \to \tilde{E}_a$ by
\begin{align*}
U(t) = \begin{cases}
            \lim\limits_{n\to\infty} U_n(t) & \text{if the limit exists},\\
            0 & \text{else}.
        \end{cases}
\end{align*}
Then $U$ is strongly measurable and adapted. Moreover, almost surely on the set $\{\|u_0\|_{E_{a,1}^0} \leq n\}$, for all $t\in [0,T]$, $U(t) = U_n(t)$. Hence, almost surely, $U\in C([0,T];\tilde{E}_{a})$ and one can check that $U$ is a pathwise mild solution to \eqref{defn:SEE2}. By construction of $U$, there exists a version of $U - S(\cdot, 0)u_0$ with paths in $C^{\lambda}([0,T];\tilde{E}_{a+\delta})$ almost surely. In particular, $U$ has almost all paths in $C([0,T];\tilde{E}_a)$. If additionally $u_0\in E_{a+\beta,1}^0$ with $\delta+\lambda<\beta$, then by Lemma \ref{lemma:interpol}, $S(\cdot,0) u_0\in L^0(\Omega;C^{\lambda}(0,T;\tilde{E}_{a+\delta}))$.

\textit{Uniqueness}. Suppose $U^1$ and $U^2$ are both adapted pathwise mild solutions to \eqref{defn:SEE2} that belong to $L^0(\OO;C([0,T];\tilde{E}_s))$. We will show that almost surely $U^1 \equiv U^2$. For each $n\geq 1$ and $i=1,2$ define the stopping times
\[ \nu_n^i := \inf\Big\{ t\in [0,T]: \|U^i(t)\|_{\tilde{E}_a} \geq n\Big\},\]
where we let $\nu_n^i= T$ if the infimum is taken over the empty set.
Let $\tau_n = \nu^1_n \wedge \nu_n^2$ and $U^i_n = U^i \one_{[0,\tau_n]}$. Then $U^i_n \in Z^p_a$ and in a similar way as in Lemma \ref{lemma:as-equality} one can check that $\one_{[0,\tau_n]} U^i_n = \one_{[0,\tau_n]} L(U^i_n)$ for $i=1, 2$.
Therefore, from Lemma \ref{lemma:as-equality}, we find that almost surely for all $t\in [0,T]$, $U^1_n(t) = U^2_n(t)$. In particular, almost surely for almost all $t\leq \tau_n$, one has $U^1(t) = U^2(t)$. If we let $n\to\infty$ we obtain that almost surely, for all $t\in [0,T]$, one has $U^1(t) = U^2(t)$.
\end{proof}

\subsection{Results without uniformity conditions in $\Omega$\label{subs:generalO}}

In this section we will prove a well-posedness result for \eqref{defn:SEE2} without the uniformity condition \ref{hpths:unifrm-const}. The approach is based on a localization argument. Due to technical reasons we use a slightly different condition than (AT2), which is more restrictive in general, but satisfied in many examples. Details on this condition can be found in \cite{AT3} and \cite[Section IV.2]{Ama}. This condition is based on the assumption that $D(A(t))$ has constant interpolation spaces $E_{\nu,r} = (E_0, D(A(t)))_{\nu,r}$ for certain $\nu>0$ and $r\in [2, \infty)$, and the fact that the resolvent is $\mu$-H\"older continuous with values in $E_{\nu,r}$ with $\mu+\nu>1$. Note that in \cite[Section IV.2]{Ama} more general interpolation spaces are allowed. For convenience we only consider the case of constant real interpolation spaces.

\let\ALTERWERTA\theenumi
\let\ALTERWERTB\labelenumi
\def\theenumi{(CIS)}
\def\labelenumi{(CIS)}
\begin{enumerate}
\item\label{asmp:CIS-cndt}
Condition \ref{asmp:AT1} holds and there are constants $\nu\in (0,1]$ and $r\in [1, \infty]$ such that $E_{\nu,r}:= (E_0,D(A(t,\omega)))_{\nu,r}$ is constant in $t\in [0,T]$ and $\omega\in \Omega$ and there is a constant $C$ such that for all $x\in E_{\nu,r}$,
  \begin{align*}
            c^{-1}\|x\|_{E_{\nu,r}} &\leq \|x\|_{(E_0, D(A(t,\omega)))_{\nu,r}} \leq c\|x\|_{E_{\nu,r}}, \ \ t\in [0,T], \omega\in\Omega.
  \end{align*}
There is a $\mu\in (0,1]$ with $\mu+\nu>1$ and a mapping $K:\Omega\to \R_+$ such that for all $s,t\in [0,T]$, $\omega\in \Omega$,
\begin{align}\label{asmp:ATprime-cndt-estm}
\|A(t,\omega)^{-1} - A(s,\omega)^{-1}\|_{\calL(E_0,E_{\nu,r})} \leq K(\omega) (t-s)^{\mu}.
\end{align}
\end{enumerate}
\let\theenumi\ALTERWERTA
\let\labelenumi\ALTERWERTB
We have allowed $\nu = 1$ on purpose. In this way we include the important case where $D(A(t,\omega))$ is constant in time.

Clearly, this condition implies \ref{asmp:AT2} with constant $L(\omega)\leq C K(\omega)$. Indeed,
one has for all $\lambda\in \Sigma_{\vartheta}$:
\begin{equation}\label{eq:CISimpliesAT}
\begin{aligned}
\|&A(t,\omega)R(\lambda,A(t,\omega))(A(t,\omega)^{-1}-A(s,\omega)^{-1})\|_{\calL(E_0)} \\ & \leq
\|A(t,\omega)R(\lambda,A(t,\omega))\|_{\calL(E_{\nu,r},E_0)} \|A(t,\omega)^{-1}-A(s,\omega)^{-1}\|_{\calL(E_0,E_{\nu,r})}
\\ & \leq C K (t-s)^{\mu} \|R(\lambda,A(t,\omega))\|_{\calL(E_0,E_{1-\nu,r}^t)}
\\ & \leq C K  (t-s)^{\mu} |\lambda|^{-\nu}.
\end{aligned}
\end{equation}

We will now replace \ref{hpths:unifrm-const} by the following hypothesis.
\let\ALTERWERTA\theenumi
\let\ALTERWERTB\labelenumi
\def\theenumi{(H5)$'$}
\def\labelenumi{(H5)$'$}
\begin{enumerate}
  \item\label{asmp:CIS-cndtHH} Assume $E_0$ is separable.
        Assume $(A(t))_{t\in [0,T]}$ and $(A(t)^*)_{t\in [0,T]}$ satisfy \ref{asmp:CIS-cndt} with constants $\mu+\nu>1$ and $\mu^*+\nu^*>1$.
\end{enumerate}
\let\theenumi\ALTERWERTA
\let\labelenumi\ALTERWERTB
Unlike \ref{hpths:unifrm-const}, the mapping $K$ is allowed to be dependent on $\Omega$.

We can now prove the main result of this section which holds under the hypotheses \ref{asmp:A-mble-adptd}--\ref{hpths:interpol-spce-wkextra}, \ref{asmp:CIS-cndtHH}, \ref{asmp:Lip-asmpt-F} and \ref{asmp:Lip-asmpt-B}.
\begin{theorem}\label{thrm:exst-mild-sol:initialcondgeneral, generaltime, non-unfm-cnst}
Assume \ref{asmp:A-mble-adptd}--\ref{hpths:interpol-spce-wkextra}, \ref{asmp:CIS-cndtHH}, \ref{asmp:Lip-asmpt-F} and \ref{asmp:Lip-asmpt-B}.
Let $\delta, \lambda > 0$ be such that $a+\delta + \lambda < \min\{\tfrac12 - \theta_B, 1-\theta_F,\eta_+\}$.
Assume that $u_0:\Omega\to E_0$ is $\F_0$-measurable and $u_0\in E_{a,1}^0$ a.s.
Then assertions (1) and (2) of Theorem \ref{thrm:exst-mild-sol:initialcondgeneral, smalltime} hold.
\end{theorem}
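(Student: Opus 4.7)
The plan is to reduce Theorem \ref{thrm:exst-mild-sol:initialcondgeneral, generaltime, non-unfm-cnst} to its uniform counterpart, Theorem \ref{thrm:exst-mild-sol:initialcondgeneral, smalltime}, by localizing the pathwise Hölder constants of \ref{asmp:CIS-cndt} via stopping times, and then patching the resulting solutions using the local uniqueness statement of Lemma \ref{lemma:as-equality}.

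First I would introduce the adapted, non-decreasing processes
\[
K(t,\omega) := \sup_{0\le s_1<s_2\le t}\frac{\|A(s_1,\omega)^{-1}-A(s_2,\omega)^{-1}\|_{\calL(E_0,E_{\nu,r})}}{(s_2-s_1)^{\mu}},
\]
together with the analogous $K^*(t,\omega)$ built from the adjoint family. By \ref{asmp:CIS-cndtHH} both are a.s.\ finite at $t=T$, so $\tau_n:=\inf\{t:K(t)\vee K^*(t)>n\}\wedge T$ defines a sequence of stopping times with $\tau_n\uparrow T$ a.s. I would then work with the stopped generators $A^{(n)}(t,\omega):=A(t\wedge\tau_n(\omega),\omega)$, which remain adapted and measurable, and by construction satisfy \ref{asmp:CIS-cndt} together with their adjoints with an $\omega$-uniform Hölder constant bounded by $n$. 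Via the computation \eqref{eq:CISimpliesAT} this yields the full (AT)-conditions with $\omega$-independent constants, i.e.\ the uniformity hypothesis \ref{hpths:unifrm-const}, while \ref{asmp:A-mble-adptd}--\ref{hpths:interpol-spce-wkextra}, \ref{asmp:Lip-asmpt-F} and \ref{asmp:Lip-asmpt-B} are preserved unchanged (the interpolation scale $\tilde{E}_\eta$ may be re-used because $E_{\nu,r}$ is time-independent). Applying Theorem \ref{thrm:exst-mild-sol:initialcondgeneral, smalltime} to each modified equation would then produce a unique adapted pathwise mild solution $U^{(n)}\in L^0(\Omega;C([0,T];\tilde{E}_a))$ with the regularity claimed in (1) and (2).

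Next I would establish compatibility. For $n\le m$ the families $A^{(n)}$ and $A^{(m)}$ both coincide with $A$ on $[0,\tau_n]$, so Lemma \ref{lemma:as-equality}, applied with $\Gamma=\Omega$ and stopping time $\tau_n$, forces $U^{(n)}=U^{(m)}$ a.s.\ on $[0,\tau_n]$. I would then define $U(t,\omega):=U^{(n)}(t,\omega)$ whenever $t\le\tau_n(\omega)$; this is well-defined a.s., adapted (since $\{\tau_n\ge t\}\in\mathscr{F}_t$ and each $U^{(n)}$ is adapted), and on $\{\tau_n=T\}$ coincides with $U^{(n)}$, so it inherits the pathwise mild solution property and the stated path regularity. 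Exhausting by $\{\tau_n=T\}\uparrow\Omega$ yields both assertions (1) and (2). For uniqueness, if $V$ is another adapted pathwise mild solution in $L^0(\Omega;C([0,T];\tilde{E}_a))$, a stopping argument analogous to the one in Theorem \ref{thrm:exst-mild-sol:initialcondgeneral, smalltime}, combined with Lemma \ref{lemma:as-equality} applied to the pair $(A,A^{(n)})$ on $[0,\tau_n]$, shows $V=U^{(n)}=U$ on $\{\tau_n=T\}$, and letting $n\to\infty$ concludes.

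The main technical obstacle lies in the second step: verifying that the stopped family $A^{(n)}$, and in particular its adjoint, inherits the entire structural package with constants depending only on $n$. This is precisely the reason for phrasing the hypothesis as \ref{asmp:CIS-cndt} rather than the weaker \ref{asmp:AT2}, since the Hölder difference is then measured in the time- and $\omega$-independent interpolation space $E_{\nu,r}$, so both time-stopping and passage to the adjoint behave cleanly; localizing directly under \ref{asmp:AT2} would be considerably more delicate because the target space in which resolvents differ would itself depend on $(t,\omega)$.
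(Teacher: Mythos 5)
Your proposal follows essentially the same route as the paper: localize the pathwise H\"older constant from \ref{asmp:CIS-cndt} by stopping times, pass to the stopped generators $A(t\wedge\tau_n)$ which satisfy the uniformity condition \ref{hpths:unifrm-const}, invoke the uniform theorem, and patch/identify the resulting solutions via Lemma \ref{lemma:as-equality}; the uniqueness argument is also the same. Two technical points are glossed, both of which the paper handles explicitly. First, you localize with the exact exponent $\mu$; the paper deliberately uses $\mu-\varepsilon$ because the running H\"older functional with the borderline exponent need not be pathwise continuous, and continuity (established in Lemma \ref{lemma:stp-time-cnt}) is what makes the hitting time a genuine $(\F_t)$-stopping time without assuming right-continuity of the filtration --- with your monotone two-parameter supremum the event $\{\tau_n\le t\}$ is only in $\F_{t+}$ in general. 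Second, Lemma \ref{lemma:as-equality} requires the processes to lie in $Z^p_a$ and to satisfy the truncated fixed-point identity, whereas Theorem \ref{thrm:exst-mild-sol:initialcondgeneral, smalltime} only delivers $L^0$-solutions; so before applying it you must additionally intersect with the sets $\{\|u_0\|_{E^0_{a,1}}\le m\}$ (and, for uniqueness, stop at the first exit of $\|U^i(t)\|_{\tilde E_a}$ from a ball), exactly as in the paper's proof. Both repairs are routine and do not change your argument's structure.
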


Unlike in Theorem \ref{thm:existenceL^pcond} one cannot expect that the pathwise mild solution has any integrability properties in $\Omega$ in general. This is because of the lack of integrability properties of $S(t,s)$.

\begin{proof}\
For $\varepsilon \in (0,\mu)$ define $\phi:[0,T]\times\Omega\to \R_+$ by
\begin{align*}
\phi(t) &=
        \sup_{s\in [0,t)} \|A(t)^{-1} - A(s)^{-1}\|_{\calL(E_0,E_{\nu,r})} |t-s|^{-\mu+\varepsilon}, \ \  \text{if $t>0$},
\end{align*}
and $\phi(0) = 0$. Define $\phi^*$ in the same way for the adjoints $(A(t)^*)_{t\in [0,T]}$. It follows from \ref{asmp:CIS-cndtHH} and Lemma \ref{lemma:stp-time-cnt} that $\phi$ and $\phi^*$ are pathwise continuous. We claim that $\phi$ and $\phi^*$ are adapted. Since $E_0$ is separable, $\|A(t)^{-1} - A(s)^{-1}\|_{\calL(E_0,E_{\nu,r})}$ can be written as a supremum of countably many functions  $\|A(t)^{-1} x_n\|_{E_{\nu,r}}$, which are all $\F_t$-measurable by the Pettis measurability theorem. The claim follows.

Define the stopping times $\kappa_{n}, \kappa_n^*: \Omega\to\RR$ by $\kappa_{n} = \inf\{t\in [0,T]:\ \phi(t) \geq n\}$, $\kappa_n^* = \inf\{t\in[0,T]:\ \phi^*(t) \geq n\}$, and let $\tau_n = \kappa_n \wedge \kappa_n^*.$
Consider the stopped process $A_n$ given by $A_n(t,\omega) = A(t\wedge \tau_n(\omega), \omega).$
Then for all $s,t \in [0,T]$,
\[\|A_n(t)^{-1} - A_n(s)^{-1}\|_{\calL(E_0, E_{\nu,r})} \leq n|t-s|^{\mu-\varepsilon}\]
and similarly for $A_n(t)^*$, and it follows from \eqref{eq:CISimpliesAT} that $A_n$ and $A_n^*$ satisfy \ref{hpths:unifrm-const} with $\mu - \varepsilon$ instead of $\mu$, and with $L(\omega) = C n$. Let $(S_n(t,s))_{0\leq s\leq t\leq T}$ be the evolution family generated by $A_n$. Since $A_n(t) = A(t)$ for $t\leq \tau_n$, it follows from the uniqueness of the evolution family that $S_n(t,s)=S(t,s)$ for $0\leq s\leq t\leq \tau_n$.

\textit{Existence.}
Let the initial values $(u_n)_{n\geq 1}$ be as in the proof of
Theorem \ref{thrm:exst-mild-sol:initialcondgeneral, smalltime}.
It follows from Theorem \ref{thm:existenceL^pcond} that for each $n\geq 1$, there is a unique adapted pathwise mild solution $U_n\in L^p(\Omega;C([0,T];\tilde{E}_a))$ of \eqref{defn:SEE2} with $A$ and $u_0$ replaced by $A_n$ and $u_n$. Moreover, it also has the regularity properties stated in Theorem \ref{thrm:exst-mild-sol:initialcondgeneral, smalltime}. We will use the paths of $(U_n)_{n\geq 1}$ to build a new process $U$ which solves \eqref{defn:SEE2}.

For $v\in Z^p_a$ or $v\in L^0(\OO;C([0,T];\tilde{E}_a))$ we write
\begin{align*}
L(v)(t) &= S(t,0) u_0 + S*F(\cdot, v)(t) + S\diamond B(\cdot, v)(t),
\\   L_{n}(v)(t) &= S_n(t,0) u_n + S_n*F(\cdot, v)(t) + S_n\diamond B(\cdot, v)(t).
\end{align*}

Let $\Gamma_n = \{\|u_0\|_{E_{a}^0}\leq n\}$. Note that $L_n(U_n) = U_n$ for every $n$. Fix $m\geq 1$ and let $n\geq m$. Note that on $\Gamma_m$, $u_n = u_m$ and on $[0,\tau]$, $A_n$ = $A_m$.
By Lemma \ref{lemma:as-equality} we find that almost surely on the set $\Gamma_m$, if $t\leq \tau_m$, $U_n(t) = U_m(t)$. Therefore, we can define
\begin{align*}
U(t) = \begin{cases}
            \lim\limits_{n\to\infty} U_n(t) & \text{if the limit exists},\\
            0 & \text{else}.
        \end{cases}
\end{align*}
Then $U$ is strongly measurable and adapted.
Moreover, almost surely on $\Gamma_m$ and $t\leq \tau_m$, $U(t) = U_m(t)$. For $\omega\in \Omega$ and $m\geq 1$ large enough, $\tau_m(\omega)= T$. Thus the process $U$ has the same path properties as $U_m$, which yields the required regularity. One easily checks that $U$ is a pathwise mild solution to \eqref{defn:SEE2}.

\textit{Uniqueness.} Let $U^1$ and $U^2$  be adapted pathwise mild solutions in the space $L^0(\Omega;C([0,T];\tilde{E}_a))$. We will show that $U^1 = U^2$. Let $\kappa_n$ and $\kappa_n^*$ be as in the existence proof.
Let
\[\nu_n^i := \inf\{t\in [0,T]:\ \|U^i(t)\|_{\tilde{E}_a} \geq n\}, \qquad i=1,2.\]
Set $\nu_n = \kappa_n \wedge \kappa_n^* \wedge \nu_n^1 \wedge \nu_n^2$. Define $U_n^i$ by $U_n^i(t) = \one_{[0,\nu_n]}(t)U^i(t)$. Then as before one sees that $\one_{[0,\nu_n]} L_n(U_n^i) = U_n^i$.
Therefore, from Lemma \ref{lemma:as-equality}
it follows that almost surely, for all $t\in [0,\nu_n]$, $U^1_n = U^2_n$. The result follows by letting $n\to \infty$.
\end{proof}

\section{Examples\label{sec:ex}}
In this section, we will consider the stochastic partial differential equation from \cite{Veraar-SEE, S-SVui}. Let $(\OO,\F,\P)$ be a complete probability space with a filtration $(\F_t)_{t\in [0,T]}$.  For $p,q\in [1,\infty]$, $\alpha\in\RR$ and a domain $S\subseteq \RR^n$, let $B^s_{p,q}(S)$ denote the Besov space (see \cite{Tr1}).

\subsection{Second order equation on $\R^n$\label{sec:Rndomain}}

Consider the stochastic partial differential equation
\begin{equation}\label{eq:SPDEexample}
\begin{split}
du(t,s) &= \big(A(t,s,\omega, D)u(t,s) + f(t,s,u(t,s))\big)\,dt \\
 &\qquad + g(t,s,u(t,s))\, dW(t,s), \ t\in (0,T],\ s\in \RR^n, \\
u(0,s) &= u_0(s),\ s\in \RR^n.
\end{split}
\end{equation}
The drift operator $A$ is assumed to be of the form
\begin{align*}
A(t,s,\omega, D) = \sum_{i,j=1}^n D_i(a_{ij}(t,s,\omega)D_j) + a_0(t,s,\omega).
\end{align*}
We assume that all coefficients are real, and satisfy a.s.
\begin{align*}
\begin{split}
a_{ij} &\in C^{\mu}([0,T];C(\RR^n),\ a_{ij}(t,\cdot) \in BUC^1(\RR^n),\ D_ka_{ij} \in BUC([0,T] \times \RR^n),\\
a_0 &\in C^{\mu}([0,T];L^n(\RR^n)) \cap C([0,T];C(\RR^n)),
\end{split}
\end{align*}
for $i,j,k=1,\ldots, n$, $t\in [0,T]$ and a constant $\mu \in (\tfrac12, 1]$. All coefficients $a_{ij}$ and $a_0$ are $\mathscr{P}_T\otimes \mathscr{B}(S)$-measurable, where $\mathscr{P}_T$ is the progressive $\sigma$-algebra.
Moreover, there exists a constant $K$ such that for all $t\in [0,T]$, $\omega\in\OO$, $s\in \RR^n$, $i,j,k=1,\ldots,n$,
\begin{align*}
|a_{ij}(t,s,\omega)| \leq K, \ |D_ka_{ij}(t,s,\omega)| \leq K, \ |a_0(t,s,\omega)| \leq K.
\end{align*}
We assume there exists an increasing function $w: (0,\infty) \to (0,\infty)$ such that $\lim_{\e\downarrow 0} w(\e) = 0$ and such that for all $t\in [0,T]$, $\omega\in\OO$, $s,s'\in \RR^n$, $i,j=1,\ldots, n$,
\begin{align*}
|a_{ij}(t, s, \omega) - a_{ij}(t, s', \omega)| \leq w(|s-s'|).
\end{align*}

Moreover, we assume that $(a_{ij})$ is symmetric and that there exists a $\kappa >0$ such that
\begin{equation}\label{eq:exampleUEC}
\kappa^{-1} |\xi|^2 \leq a_{ij}(t,s,\omega)\xi_i\xi_j \leq \kappa |\xi|^2,\qquad s\in \RR^n,\ t\in [0,T],\ \xi\in \RR^n.
\end{equation}
Let $f,g: [0,T] \times \OO \times \RR^n \times \RR \to \RR$ be measurable, adapted and Lipschitz continuous functions with linear growth uniformly in $\OO\times [0,T] \times \RR^n$, i.e., there exist $L_f, C_f, L_g, C_g$ such that for all $t\in [0,T], \omega\in \OO, s\in \RR^n$ and $x,y\in \RR$,
\begin{align}
\label{eq:lipf} |f(t,\omega, s, x) - f(t,\omega, s, y)| &\leq L_f|x-y|, \\
\label{eq:lingrowthf}
|f(t,\omega, s, x)| &\leq C_f(1+|x|), \\
\label{eq:lipg} |g(t,\omega, s, x) - g(t,\omega, s, y)| &\leq L_g|x-y|, \\
\label{eq:lingrowthg}
|g(t,\omega, s, x)| &\leq C_g(1+|x|).
\end{align}
Let $W$ be an $L^2(\RR^n)$-valued Brownian motion with respect to $(\F_t)_{t\in [0,T]}$, with covariance $Q\in \calL(L^2(\RR^n))$ such that
\begin{equation}\label{ex:assumptioncovar}
\sqrt{Q} \in \calL(L^2(\RR^n), L^\infty(\RR^n)).
\end{equation}

Let $p \geq 2$ and set $E = L^p(\RR^n)$. On $E$, we define the linear operators $A(t,\omega)$ for $t\in [0,T]$, $\omega\in\OO$, by
\begin{align*}
D(A(t,\omega)) &= W^{2,p}(\RR^n), \\
A(t,\omega)u &= A(t,\cdot,\omega, D)u.
\end{align*}
By integration by parts, the adjoint $A(t,\omega)^*$ of $A(t,\omega)$ satisfies
\begin{align*}
D(A(t,\omega)^*) &= W^{2,p'}(\RR^n) \\
A(t,\omega)^*u &= A(t,\cdot,\omega, D)u.
\end{align*}
The operator $A(t,\omega): L^p(\RR^n) \to L^p(\RR^n)$ is a closed operator. In fact, from \cite[Theorem 8.1.1]{KryLecturesEllParab}, it follows that there exists a constant $c$ depending only on $p, \kappa, K, w$ and $n$, such that
\begin{align*}
c^{-1} \|x\|_{W^{2,p}(\RR^n)} \leq \|A(t,\omega)x\|_{L^p(\RR^n)} + \|x\|_{L^p(\RR^n)} \leq c\|x\|_{W^{2,p}(\RR^n)}, \ x\in W^{2,p}(\RR^n).
\end{align*}
By a careful check of the proof of \cite[Theorem 7.3.6]{Pazy}, it follows that one can find a sector $\Sigma_\vartheta$, $\vartheta \in (\pi/2, \pi)$, and a constant $M$, both independent of $t$ and $\omega$, such that for all $\lambda \in \Sigma_\vartheta$,
\begin{align*}
\|\lambda R(\lambda, A(t,\omega))\| \leq M.
\end{align*}
Changing $A(t,\omega)$ to $A(t,\omega) - \lambda_0$ and $f$ to $f+\lambda_0$ if necessary, it follows that \ref{asmp:AT1} holds. Note that the constant $C_f$ may be affected when replacing $f$ with $f+\lambda_0$, but it will remain independent of $t,\omega, s$ and $x$.
The operator $A(t,\omega)$ satisfies \ref{asmp:CIS-cndt} with $\nu=1$, see \cite[Theorem 4.1]{Yag90}. Hence \ref{hpths:ATplus} and \ref{asmp:CIS-cndtHH} are satisfied.

Hypothesis \ref{asmp:A-mble-adptd} holds by Example \ref{example:ConstDomH1Verify}.
To verify \ref{hpths:interpol-spce-wk}, take $\eta_+ = 1$ and for $\eta \in (0,\eta_+)$, set
\[ \tilde{E}_\eta := (L^p(\RR^n), W^{2,p}(\RR^n))_{\eta, p} = B^{2\eta}_{p,p}(\RR^n).\]
We do not need to choose an $\eta_-$, see Remark \ref{rmrk:InterpolSpaceHypoth}. Since $B^{2\eta}_{p,p}(\RR^n)$ has type $2$ and is a {\umd} space, \ref{hpths:interpol-spce-wkextra} holds.

Let $F: [0,T]\times\OO\times E \to E$ be defined by
\[F(t,\omega, x)(s) = f(t,\omega, s, x(s)).\]
Let $B:[0,T]\times\OO\times E\to \g(L^2(\RR^n), E)$ be defined by
\[(B(t,\omega, x)h)(s) = g(t,\omega, s, x(s))(\sqrt{Q}h)(s).\]
By assumption \eqref{ex:assumptioncovar} it follows that for any $x\in E$ and $h\in L^2(S)$,
\[|x(s) \cdot (Q h)(s)| \leq |x(s)| \|Q\|_{\calL(L^2(S),L^\infty(S))} \|h\|_{L^2(S)} \ \ \text{almost everywhere}.\]
Therefore, from \cite[Lemma 2.7]{Veraar-SEE} we can conclude that there is a constant $C$ depending on $Q$ and $p$ such that
\[\|x\sqrt{Q}\|_{\g(L^2(\RR^n), E)} \leq C\|x\|_{E}.\]
It follows that \ref{asmp:Lip-asmpt-F} and \ref{asmp:Lip-asmpt-B} are satisfied with choices $a=\theta_F=\theta_B = 0$. With the above definitions of $A$, $F$ and $B$, problem \eqref{eq:SPDEexample} can be rewritten as
\begin{equation}\label{ex:EvolEqExmpl1}
\begin{aligned}
du(t) &= (A(t)u(t) + F(t,u(t)))\;dt + B(t, u(t))\;dW(t), \\
u(0) &= u_0.
\end{aligned}
\end{equation}
Hence, if $\delta, \lambda > 0$ such that $\delta + \lambda < \tfrac12$, then Theorem \ref{thrm:exst-mild-sol:initialcondgeneral, generaltime, non-unfm-cnst} can be applied: there exists a unique adapted pathwise mild solution $u \in L^0(\OO;C([0,T];L^p(\RR^n)))$ to \eqref{ex:EvolEqExmpl1}. If additionally $u_0 \in W^{1,p}(\RR^n) = \tilde{E}_{1/2}$, then the solution $u$ belongs to the space $L^0(\OO;C^\lambda(0,T;B^{2\delta}_{p,2}(\RR^n)))$. This is summarized in the next theorem.
\begin{theorem}
Let $p\in (2,\infty)$ and suppose $u_0:\OO\to L^p(\RR^n)$ is $\mathscr{F}_0$-measurable.
\begin{enumerate}
\item There exists a unique adapted pathwise mild solution $u$ that belongs to the space $L^0(\OO;C([0,T];L^p(\RR^n)))$.
\item If $u_0 \in W^{1,p}(\RR^n)$ a.s., and $\delta, \lambda > 0$ such that $\delta +\lambda < \tfrac12$, then $u$ belongs to $L^0(\OO;C^\lambda(0,T;B^{2\delta}_{p,p}(\RR^n)))$.
\end{enumerate}
\end{theorem}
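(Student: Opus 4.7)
The plan is to apply Theorem \ref{thrm:exst-mild-sol:initialcondgeneral, generaltime, non-unfm-cnst} with $a=\theta_F=\theta_B=0$ and the interpolation scale $\tilde{E}_\eta=B^{2\eta}_{p,p}(\R^n)$, $\eta_+=1$, as set up in the paragraphs immediately preceding the theorem. Under this choice the admissibility condition $a+\delta+\lambda<\min\{\tfrac12-\theta_B,1-\theta_F,\eta_+\}$ collapses to $\delta+\lambda<\tfrac12$, which is precisely the standing assumption in part (2); part (1) carries no such constraint.

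The first step is to collect the verifications of \ref{asmp:A-mble-adptd}--\ref{hpths:interpol-spce-wkextra}, \ref{asmp:CIS-cndtHH}, \ref{asmp:Lip-asmpt-F} and \ref{asmp:Lip-asmpt-B} already carried out in the text above. Specifically, \ref{asmp:AT1} follows from \cite[Theorem 7.3.6]{Pazy} after a shift absorbed into $F$; \ref{asmp:CIS-cndtHH} follows from \cite[Theorem 4.1]{Yag90} applied to both $A(t,\omega)$ and $A(t,\omega)^*$, the adjoint being a divergence-form operator of the same type on $L^{p'}(\R^n)$; \ref{asmp:A-mble-adptd} is Example \ref{example:ConstDomH1Verify}; and \ref{hpths:interpol-spce-wk}--\ref{hpths:interpol-spce-wkextra} hold because $(L^p(\R^n),W^{2,p}(\R^n))_{\eta,p}=B^{2\eta}_{p,p}(\R^n)$ is a \umd{} space of type $2$. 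The Lipschitz and linear-growth conditions on $F$ and $B$ with $a=\theta_F=\theta_B=0$ follow from the pointwise bounds \eqref{eq:lipf}--\eqref{eq:lingrowthg} together with \cite[Lemma 2.7]{Veraar-SEE}, which is used to convert the multiplication by $\sqrt{Q}$ into a bound on the $\gamma$-radonifying norm, via assumption \eqref{ex:assumptioncovar}.

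With those inputs in place, part (1) is immediate: since $u_0\in L^p(\R^n)=E_{0,1}^0=E_{a,1}^0$ a.s., Theorem \ref{thrm:exst-mild-sol:initialcondgeneral, generaltime, non-unfm-cnst}(1) produces a unique adapted pathwise mild solution in $L^0(\OO;C([0,T];L^p(\R^n)))$. For part (2), to apply Theorem \ref{thrm:exst-mild-sol:initialcondgeneral, generaltime, non-unfm-cnst}(2) I need $u_0\in E_{\beta,1}^0=(L^p(\R^n),W^{2,p}(\R^n))_{\beta,1}=B^{2\beta}_{p,1}(\R^n)$ for some $\beta>\delta+\lambda$. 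Since $\delta+\lambda<\tfrac12$, pick $\beta\in(\delta+\lambda,\tfrac12)$; then $2\beta<1$ and the classical Besov embedding $W^{1,p}(\R^n)\hookrightarrow B^{2\beta}_{p,1}(\R^n)$ gives $u_0\in E_{\beta,1}^0$ a.s. Theorem \ref{thrm:exst-mild-sol:initialcondgeneral, generaltime, non-unfm-cnst}(2) then yields $u\in L^0(\OO;C^\lambda(0,T;\tilde{E}_\delta))=L^0(\OO;C^\lambda(0,T;B^{2\delta}_{p,p}(\R^n)))$.

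The only step beyond bookkeeping that deserves a sentence of justification is the Besov embedding $W^{1,p}(\R^n)\hookrightarrow B^{2\beta}_{p,1}(\R^n)$ for $\beta<\tfrac12$; this is standard (for instance by reiteration from $W^{1,p}\hookrightarrow(L^p,W^{2,p})_{1/2,\infty}$, or from $F^{1}_{p,2}\hookrightarrow B^{s}_{p,1}$ whenever $s<1$). Everything else is an application of the machinery already developed in Sections \ref{eq:SEF}--\ref{sec:semil}.
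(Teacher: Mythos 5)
Your proposal is correct and follows essentially the same route as the paper: the theorem is obtained by verifying \ref{asmp:A-mble-adptd}--\ref{hpths:interpol-spce-wkextra}, \ref{asmp:CIS-cndtHH}, \ref{asmp:Lip-asmpt-F}, \ref{asmp:Lip-asmpt-B} with $a=\theta_F=\theta_B=0$ and $\tilde{E}_\eta=B^{2\eta}_{p,p}(\R^n)$, and then invoking Theorem \ref{thrm:exst-mild-sol:initialcondgeneral, generaltime, non-unfm-cnst}. Your handling of the initial condition in part (2) via $W^{1,p}(\R^n)\hookrightarrow B^{2\beta}_{p,1}(\R^n)=E^0_{\beta,1}$ for $\beta\in(\delta+\lambda,\tfrac12)$ is in fact slightly more careful than the paper's shorthand identification $W^{1,p}(\R^n)=\tilde{E}_{1/2}$.
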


\begin{remark}\label{rem:improvementexamples} \
\begin{enumerate}
\item A first order differential term in problems \eqref{eq:SPDEexample} and \eqref{eq:SPDEexampleBddDom} may be included. This term may in fact be included in the function $f$. To handle such a situation, one needs to consider $a > 0$, $\theta_F > 0$.
\item One can also consider the case of non-trace class noise, e.g. space-time white noise, see for instance \cite{NVW-evolution}. In this situation one needs to take $a>0$, $\theta_B > 0$. Also in the case of boundary noise or random point masses, one can consider $a > 0, \theta_F > 0$ and $\theta_B > 0$, see \cite{SV-WaveEq, SV-BoundaryNoise}.
\end{enumerate}
\end{remark}

\subsection{Second order equation on a bounded domain $S$ with Neumann boundary conditions\label{sec:boundeddomain}}
Let $S$ be a bounded domain in $\RR^n$ with $C^2$-boundary and outer normal vector $n(s)$. Consider the equation
\begin{equation}\label{eq:SPDEexampleBddDom}
\begin{split}
du(t,s) &= \big(A(t,s,\omega, D)u(t,s) + f(t,s,u(t,s))\big)\,dt \\
 &\qquad + g(t,s,u(t,s))\, dW(t,s), \ t\in (0,T],\ s\in S, \\
C(t,s,\omega, D)u(t,s) &= 0, \ t\in (0,T],\ s \in \partial S, \\
u(0,s) &= u_0(s),\ s\in S.
\end{split}
\end{equation}
The drift operator $A$ is of the form
\begin{align}
A(t,s,\omega, D) &= \sum_{i,j=1}^n D_i(a_{ij}(t,s,\omega)D_j) + a_0(t,s,\omega), \label{eq:exampledrift}\\
C(t,s,\omega,D) &= \sum_{i,j=1}^n a_{ij}(t,s,\omega)n_i(s)D_j, \label{eq:ExmpBdCnt}
\end{align}
where $D_i$ stands for the derivative in the $i$-th coordinate.
A difficult in \eqref{eq:SPDEexampleBddDom} is that the boundary value operator changes with time.

All coefficients are real and satisfy a.s.
\begin{equation}\label{ex:assumptionscoefficients}
\begin{split}
a_{ij} &\in C^{\mu}([0,T];C(\overline{S})),\ a_{ij}(t,\cdot) \in C^1(\overline{S}),\ D_ka_{ij} \in C([0,T] \times \overline{S}),\\
a_0 &\in C^{\mu}([0,T];L^n(S)) \cap C([0,T];C(\overline{S})),
\end{split}
\end{equation}
for $i,j,k=1,\ldots, n$, $t\in [0,T]$ and a constant $\mu \in (\tfrac12, 1]$ (see Remark \ref{rem:mugroterhalf} for an improvement). Moreover, all other assumptions from Section \ref{sec:Rndomain} regarding $a_{ij}, a_0, f$ and $g$ \eqref{eq:lipf}, \eqref{eq:lingrowthf}, \eqref{eq:lipg}, \eqref{eq:lingrowthg} and
\begin{align*}
\sqrt{Q} \in \calL(L^2(S), L^\infty(S))
\end{align*}
are assumed to hold.

Let $p \geq 2$ and set $E = L^p(S)$. On $E$, we define the linear operators $A(t,\omega)$ for $t\in [0,T]$, $\omega\in\OO$, by
\begin{align*}
D(A(t,\omega)) &= \{u\in W^{2,p}(S):\ C(t,s,\omega,D)u=0,\ s\in \partial S\}, \\
A(t,\omega)u &= A(t,\cdot,\omega, D)u.
\end{align*}
With integration by parts, one observes that the adjoint $A(t,\omega)^*$ of $A(t,\omega)$ is given by
\begin{align*}
D(A(t,\omega)^*) &= \{u\in W^{2,p'}(S):\ C(t,s,D)u=0,\ s\in \partial S\}, \\
A(t,\omega)^*u &= A(t,\cdot,\omega, D)u.
\end{align*}

As in the previous example, $A(t,\omega)$ is a closed operator on $L^p(S)$, see \cite[Theorem 8.5.6]{KryLecturesEllParab} and the discussion in \cite[Section 9.3]{KryLecturesEllParab}. We have
\begin{align*}
c^{-1} \|x\|_{W^{2,p}(S)} \leq \|A(t,\omega)x\|_{L^p(S)} + \|x\|_{L^p(S)} \leq c\|x\|_{W^{2,p}(S)}, \ x\in D(A(t,\omega)).
\end{align*}
where the constant $c$ depends only on $p, \kappa, K, w, n$ and the shape of the domain $S$. As in the previous example, $A(t,\omega)$ and $A(t,\omega)^*$ both satisfy \ref{asmp:AT1}.

Next, we will show \ref{asmp:CIS-cndt}.
By \cite[Theorem 5.2]{Amann-nonh} and \cite[(5.25)]{Amann-nonh}, it follows that for $\nu < \frac12 + \frac{1}{2p}$,
\begin{align}\label{ex:intpoltimespaceindep}
(E, D(A(t,\omega)))_{\nu, p} = B^{2\nu}_{p,p}(S),
\end{align}
with constants independent of $t,\omega$. For $\nu < \frac12 + \frac{1}{2p'}$, we obtain the same result for the adjoint $A(t,\omega)^*$. Hence, for $\nu < \frac12$, by \eqref{ex:intpoltimespaceindep}, \eqref{array-cnt-emb1}, \eqref{array-cnt-emb2} and \cite[Theorem 4.1]{Yag90} we obtain for $\e>0$ such that $\nu+\e < \frac12$,
\begin{align*}
\|A(t)^{-1} - A(s)^{-1}\|_{\calL(E_0, B^{2\nu}_{p,q})} &\leq \|(-A(t))^{\nu+\e}(A(t)^{-1} - A(s)^{-1})\|_{\calL(E_0)} \\&\leq K(\omega)|t-s|^{\mu}.
\end{align*}
A similar estimation holds again for the adjoint. This proves \ref{asmp:CIS-cndt} and therefore \ref{asmp:CIS-cndtHH} and \ref{hpths:ATplus} (see also \eqref{eq:CISimpliesAT} and its discussion).

The verification of hypothesis \ref{asmp:A-mble-adptd} is given in the appendix, see Lemma \ref{lemma:(H1)-verification}.

To verify \ref{hpths:interpol-spce-wk}, take $\eta_+ = \tfrac12$ and $\tilde{E}_{\eta} := (E, W^{2,p})_{\eta, p}$. Note that in particular, regarding \eqref{ex:intpoltimespaceindep}, \ref{hpths:interpol-spce-wk}(ii) is satisfied. As in the previous example, we do not need to consider $\eta_-$. Also \ref{hpths:interpol-spce-wkextra} is satisfied by the choice of $\tilde{E}_\eta$.
The verification of (HF) and (HB) is as in Section \ref{sec:Rndomain}. In fact, we can take $a = \theta_F = \theta_B = 0$ again. This means that problem \eqref{eq:SPDEexampleBddDom} can be rewritten as a stochastic evolution equation
\begin{equation}\label{ex:EvolEqExmpl2}
\begin{aligned}
du(t) &= (A(t)u(t) + F(t,u(t)))\;dt + B(t, u(t))\;dW(t), \\
u(0) &= u_0.
\end{aligned}
\end{equation}
Hence, if $\delta, \lambda > 0$ such that $\delta + \lambda < \tfrac12$, then Theorem \ref{thrm:exst-mild-sol:initialcondgeneral, generaltime, non-unfm-cnst} can be applied: there exists a unique adapted pathwise mild solution $u\in L^0(\OO;C([0,T];L^p(S)))$ to \eqref{ex:EvolEqExmpl2}. Moreover, if $\beta < \tfrac12$ such that $\lambda+\delta < \beta$ and if $u_0 \in W^{1,p}(S)$ a.s., then $u \in L^0(\OO;C^{\lambda}(0,T;B^{2\delta}_{p,p}))$. Summarizing, we have the following result.
\begin{theorem}\label{Thm:ExistUniqueExmpl}
Let $p\in (2,\infty)$ and suppose $u_0:\OO\to L^p(S)$ is $\mathscr{F}_0$-measurable.
\begin{enumerate}
\item There exists a unique adapted pathwise mild solution $u$ that belongs to the space $L^0(\OO;C([0,T];L^p(S)))$.
\item If $u_0 \in W^{1,p}(S)$ a.s., and $\delta, \lambda > 0$ such that $\delta + \lambda < \tfrac12$, then $u$ belongs to $L^0(\OO;C^{\lambda}(0,T; B^{2\delta}_{p,p}(S))).$
\end{enumerate}
\end{theorem}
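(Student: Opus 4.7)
The theorem is essentially a consequence of Theorem \ref{thrm:exst-mild-sol:initialcondgeneral, generaltime, non-unfm-cnst} applied to the concrete operators $A(t,\omega)$, $F(t,\omega,\cdot)$, $B(t,\omega,\cdot)$ built from \eqref{eq:exampledrift}--\eqref{eq:ExmpBdCnt}, the functions $f,g$ and the covariance $\sqrt{Q}$. First I would cleanly collect what has already been verified in the discussion preceding the theorem: \ref{asmp:AT1} follows from \cite[Theorem 8.5.6]{KryLecturesEllParab} and \cite[Theorem 7.3.6]{Pazy} (after a shift $A \mapsto A - \lambda_0$ absorbed into $f$); \ref{hpths:ATplus} and \ref{asmp:CIS-cndtHH} follow from Amann's identification \eqref{ex:intpoltimespaceindep} combined with Yagi's H\"older estimate \cite[Theorem 4.1]{Yag90}, applied symmetrically to the adjoints; \ref{asmp:A-mble-adptd} is deferred to Lemma \ref{lemma:(H1)-verification}; and \ref{hpths:interpol-spce-wk}, \ref{hpths:interpol-spce-wkextra} hold with $\eta_+ = \tfrac12$, $\tilde{E}_\eta = (L^p(S), W^{2,p}(S))_{\eta,p}$, since Besov spaces $B^{2\eta}_{p,p}(S)$ with $p\in(2,\infty)$ are \umd{} and have type 2. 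Finally, \ref{asmp:Lip-asmpt-F} and \ref{asmp:Lip-asmpt-B} (with $a=\theta_F=\theta_B=0$) follow from the Lipschitz and linear-growth assumptions \eqref{eq:lipf}--\eqref{eq:lingrowthg} together with \cite[Lemma 2.7]{Veraar-SEE} to handle the multiplication operator $x\mapsto x\sqrt{Q}\in \gamma(L^2(S),L^p(S))$.

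For part (1), I would then simply apply Theorem \ref{thrm:exst-mild-sol:initialcondgeneral, generaltime, non-unfm-cnst}(1) with $a=\theta_F=\theta_B=0$ and any $\delta,\lambda>0$ with $\delta+\lambda<\tfrac12$: this yields a unique adapted pathwise mild solution with paths in $C([0,T];L^p(S))$ and gives as a bonus that $u - S(\cdot,0)u_0 \in C^{\lambda}(0,T;B^{2\delta}_{p,p}(S))$ a.s.

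For part (2), the only extra ingredient is to show that $u_0 \in W^{1,p}(S)$ implies $u_0 \in E^0_{\beta,1} = (L^p(S), D(A(0)))_{\beta,1}$ for some $\beta \in (\lambda+\delta,\tfrac12)$, so that Theorem \ref{thrm:exst-mild-sol:initialcondgeneral, generaltime, non-unfm-cnst}(2) can upgrade the regularity of $S(\cdot,0)u_0$ and hence of $u$ itself. Since $2\beta<1<1+\tfrac1p$, Amann's identification \eqref{ex:intpoltimespaceindep} (with the $(\cdot,\cdot)_{\beta,1}$-interpolation functor in place of $(\cdot,\cdot)_{\beta,p}$, which again produces a Besov space with no boundary condition) gives $E^0_{\beta,1} = B^{2\beta}_{p,1}(S)$ with norm bounded uniformly in $\omega$, and for any such $\beta<\tfrac12$ the standard embedding $W^{1,p}(S)\hookrightarrow B^{2\beta}_{p,1}(S)$ applies. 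Choosing $\beta\in(\lambda+\delta,\tfrac12)$ and invoking Theorem \ref{thrm:exst-mild-sol:initialcondgeneral, generaltime, non-unfm-cnst}(2) then yields $u\in L^0(\OO;C^{\lambda}(0,T;B^{2\delta}_{p,p}(S)))$, which is the claim.

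The proof is almost entirely a verification-and-citation exercise; the one step that requires a little care is part (2), where I must make sure that the interpolation identity \eqref{ex:intpoltimespaceindep} passes from the $(\cdot,\cdot)_{\nu,p}$ functor to the $(\cdot,\cdot)_{\beta,1}$ functor for $\beta<\tfrac12$ with bounds uniform in $(t,\omega)$. This is the point where one uses that the boundary condition in $D(A(t,\omega))$ does not see interpolation levels below $\tfrac12+\tfrac{1}{2p}$, so the constant $c$ in \eqref{ex:intpoltimespaceindep} depends only on $p,\kappa,K,w,n$ and $S$, not on $(t,\omega)$. I do not anticipate any other obstruction.
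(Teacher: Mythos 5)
Your proposal matches the paper's argument exactly: the paper's ``proof'' of this theorem is precisely the verification of \ref{asmp:AT1}--\ref{asmp:CIS-cndtHH}, \ref{asmp:A-mble-adptd}--\ref{hpths:interpol-spce-wkextra}, \ref{asmp:Lip-asmpt-F} and \ref{asmp:Lip-asmpt-B} with $a=\theta_F=\theta_B=0$ carried out in the text preceding the statement, followed by an application of Theorem \ref{thrm:exst-mild-sol:initialcondgeneral, generaltime, non-unfm-cnst}. Your treatment of part (2) --- identifying $E^0_{\beta,1}=B^{2\beta}_{p,1}(S)$ for $\beta<\tfrac12$ via \eqref{ex:intpoltimespaceindep} and embedding $W^{1,p}(S)\hookrightarrow B^{2\beta}_{p,1}(S)$ for some $\beta\in(\lambda+\delta,\tfrac12)$ --- is in fact slightly more careful than the paper, which leaves this step implicit.
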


\begin{remark}\label{rem:mugroterhalf}
In the above we assumed $\mu>1/2$. However, it is clear that we can consider the case $\mu>\frac12 - \frac{1}{2p}$ as well. Moreover, the previous Remark \ref{rem:improvementexamples} applies in the above result as well.
\end{remark}

\appendix

\section{A technical result for H\"older continuous functions}

Let $X$ be a Banach space. For a given $\mu$-H\"older function $f:[0,T]\to X$ and $\alpha\in (0,\mu)$, let
\begin{align*}
\phi_{f,\alpha}(t) &=
        \begin{cases}
        \sup_{s\in [0,t)} \frac{\|f(t)- f(s)\|}{|t-s|^{\alpha}}, & \text{if $t\in(0,T]$}, \\
        0 & \text{if $t=0$}.
    \end{cases} \label{defn:fnct-stp-tme}
\end{align*}

\begin{lemma}\label{lemma:stp-time-cnt}
Let $f\in C^{\mu}([0,T];X)$ with $\mu\in (0,1]$. Then for every $\alpha \in (0,\mu)$, the function $\phi_{f,\alpha}$ is in $C^{\mu-\alpha}([0,T];X)$.
\end{lemma}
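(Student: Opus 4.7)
Write $M := [f]_{C^\mu([0,T];X)}$. The target is the estimate
\[
|\phi_{f,\alpha}(t)-\phi_{f,\alpha}(s)|\leq C\,M\,(t-s)^{\mu-\alpha},\qquad 0\leq s<t\leq T,
\]
together with the uniform bound $\phi_{f,\alpha}(t)\leq M\,T^{\mu-\alpha}$, which follows at once from the definition since $\|f(t)-f(r)\|/(t-r)^{\alpha}\leq M(t-r)^{\mu-\alpha}\leq M T^{\mu-\alpha}$. The case $s=0$ of the Hölder bound is handled the same way, so from now on I fix $0<s<t\leq T$ and treat the two one-sided inequalities separately.

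\emph{Upper bound $\phi_{f,\alpha}(t)\leq \phi_{f,\alpha}(s)+ CM(t-s)^{\mu-\alpha}$.} Fix $r\in[0,t)$ and consider two cases depending on whether $r$ lies to the left or to the right of $s$. If $r\geq s$, then $t-r\leq t-s$ and the Hölder continuity of $f$ gives directly
\[
\frac{\|f(t)-f(r)\|}{(t-r)^{\alpha}}\leq M(t-r)^{\mu-\alpha}\leq M(t-s)^{\mu-\alpha}.
\]
If $r<s$, insert $f(s)$ and use $(s-r)/(t-r)\leq 1$ together with $t-r\geq t-s$ to obtain
\[
\frac{\|f(t)-f(r)\|}{(t-r)^{\alpha}}
\leq \frac{\|f(s)-f(r)\|}{(s-r)^{\alpha}}\Bigl(\tfrac{s-r}{t-r}\Bigr)^{\alpha}
+\frac{M(t-s)^{\mu}}{(t-r)^{\alpha}}
\leq \phi_{f,\alpha}(s)+M(t-s)^{\mu-\alpha}.
\]
Taking the sup over $r\in[0,t)$ yields the desired one-sided bound.

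\emph{Lower bound $\phi_{f,\alpha}(s)\leq \phi_{f,\alpha}(t)+CM(t-s)^{\mu-\alpha}$.} Fix $r\in[0,s)$. Here I split according to the size of $s-r$ versus $t-s$. If $s-r\leq t-s$, then the Hölder condition gives
\[
\frac{\|f(s)-f(r)\|}{(s-r)^{\alpha}}\leq M(s-r)^{\mu-\alpha}\leq M(t-s)^{\mu-\alpha}.
\]
Otherwise $s-r>t-s$, whence $(t-r)/(s-r)=1+(t-s)/(s-r)\leq 2$ and hence
\[
\frac{\|f(s)-f(r)\|}{(s-r)^{\alpha}}
\leq 2^{\alpha}\frac{\|f(s)-f(r)\|}{(t-r)^{\alpha}}
\leq 2^{\alpha}\!\left(\frac{\|f(t)-f(r)\|}{(t-r)^{\alpha}}+\frac{M(t-s)^{\mu}}{(t-r)^{\alpha}}\right),
\]
and since $t-r\geq t-s$ the last bracket is at most $\phi_{f,\alpha}(t)+M(t-s)^{\mu-\alpha}$. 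Taking the sup over $r\in[0,s)$ closes this direction, and combining both inequalities finishes the proof.

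The whole argument is elementary; the only real subtlety is choosing the right case split in each direction so that the supremum in the definition of $\phi_{f,\alpha}$ can be compared at two different time endpoints. The case $r<s<t$ with $s-r$ comparable to $t-s$ is the only delicate range, and it is handled by the two-line insertion $f(s)\leftrightarrow f(t)$ plus the bound $(t-r)/(s-r)\leq 2$, so I do not expect any serious obstacle.
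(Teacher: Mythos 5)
Your first half is sound: the inequality $\phi_{f,\alpha}(t)\leq\phi_{f,\alpha}(s)+M(t-s)^{\mu-\alpha}$ is proved correctly (and the two-sided structure is in fact cleaner than the paper's argument, which asserts monotonicity of $\phi_{f,\alpha}$ and splits on where the supremum defining $\phi_{f,\alpha}(t)$ is attained). The gap is in the reverse inequality, which is where the content of the lemma lies. In the case $s-r>t-s$ you replace $(s-r)^{-\alpha}$ by $2^{\alpha}(t-r)^{-\alpha}$, and after taking the supremum you only obtain
\[
\phi_{f,\alpha}(s)\leq 2^{\alpha}\bigl(\phi_{f,\alpha}(t)+M(t-s)^{\mu-\alpha}\bigr).
\]
This does not close the argument: it yields $\phi_{f,\alpha}(s)-\phi_{f,\alpha}(t)\leq(2^{\alpha}-1)\phi_{f,\alpha}(t)+2^{\alpha}M(t-s)^{\mu-\alpha}$, and the term $(2^{\alpha}-1)\phi_{f,\alpha}(t)$ does not tend to $0$ as $t\downarrow s$ — it is generically of order one — so you have not even established continuity, let alone $(\mu-\alpha)$-H\"older continuity. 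Any estimate of the form $\phi(s)\leq c\,\phi(t)+\dots$ with a constant $c>1$ in front of $\phi(t)$ is useless for this purpose.

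The multiplicative loss has to be avoided by keeping the comparison of the two kernels additive: for $r<s$ write
\[
\frac{\|f(s)-f(r)\|}{(s-r)^{\alpha}}\leq\frac{\|f(t)-f(r)\|}{(t-r)^{\alpha}}+\frac{\|f(t)-f(s)\|}{(t-r)^{\alpha}}+\|f(s)-f(r)\|\Bigl(\frac{1}{(s-r)^{\alpha}}-\frac{1}{(t-r)^{\alpha}}\Bigr),
\]
so that the first term is bounded by $\phi_{f,\alpha}(t)$ with constant exactly $1$ and the second by $M(t-s)^{\mu-\alpha}$. All the work is then in the third term, which after $\|f(s)-f(r)\|\leq M(s-r)^{\mu}$ requires the elementary inequality
\[
(s-r)^{\mu}\bigl((s-r)^{-\alpha}-(t-r)^{-\alpha}\bigr)\leq(t-s)^{\mu-\alpha},\qquad 0\leq r<s<t,
\]
i.e.\ the paper's \eqref{eq:holderclaim}; substituting $s-r=x(t-r)$ this reduces to $x^{\mu-\alpha}-x^{\mu}\leq(1-x)^{\mu-\alpha}$ on $(0,1)$, which needs a short but genuine verification. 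Your case split $s-r\leq t-s$ versus $s-r>t-s$ cannot substitute for this inequality: cruder bounds on the kernel difference (such as $(t-r)^{\alpha}-(s-r)^{\alpha}\leq(t-s)^{\alpha}$) only give the H\"older exponent $\min\{\alpha,\mu-\alpha\}$, which is strictly weaker than the claimed $\mu-\alpha$ when $\mu>2\alpha$.
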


\begin{proof}
Let $C = [f]_{C^\mu([0,T];X)}$.
Let $\alpha\in (0,\mu)$ and write $\phi:=\phi_{f,\alpha}$, let $\varepsilon = \mu-\alpha$ and fix $0\leq \tau<t\leq T$.

Since $\phi$ is increasing we have $\phi(t) \geq \phi(\tau)$. If $\tau=0$, one can write $|\phi(t) - \phi(0)| \leq C\sup_{s\in [0,t)}(t-s)^{\varepsilon} = C t^{\varepsilon}$.
Next consider $\tau\neq 0$.

\textit{Step 1:} Assume that $\displaystyle \phi(t) = \sup_{s\in [\tau,t)} \|f(t) - f(s)\| (t-s)^{-\mu+\varepsilon}$. Then
\begin{align*}
|\phi(t) - \phi(\tau)| &\leq \phi(t) = \sup_{s\in [\tau,t)} \frac{\|f(t) - f(s)\|}{(t-s)^{\mu-\varepsilon}} \leq \sup_{s\in [\tau, t)} C(t-s)^\varepsilon = C (t-\tau)^\varepsilon.
\end{align*}

\textit{Step 2:} Now suppose $\displaystyle \phi(t) = \sup_{s\in [0,\tau)} \|f(t) - f(s)\| (t-s)^{-\mu+\varepsilon}$. Then one has
\[|\phi(t) - \phi(\tau)| \leq \sup_{s\in [0,\tau)} \Big| \frac{\|f(t) - f(s)\|}{(t-s)^{\mu-\varepsilon}} - \frac{\|f(\tau) - f(s)\|}{(\tau-s)^{\mu-\varepsilon}}\Big|.
\]
With the triangle inequality, we find that
\begin{align}
\nonumber |\phi&(t) - \phi(\tau)| \leq \\
&\sup_{s\in [0,\tau)} \frac{\|f(t) - f(\tau)\|}{(t-s)^{\mu-\varepsilon}} \nonumber
 + \sup_{s\in [0,\tau)} \|f(\tau) - f(s)\||(t-s)^{-\mu+\varepsilon} - (\tau-s)^{-\mu+\varepsilon}| \nonumber\\
  &\leq C \sup_{s\in [0,\tau)} \frac{(t-\tau)^{\mu}}{(t-s)^{\mu-\varepsilon}} + C \sup_{s\in [0,\tau)} (\tau-s)^{\mu}((\tau-s)^{-\mu+\varepsilon} - (t-s)^{-\mu+\varepsilon})
\nonumber\\
  &=C (t-\tau)^{\varepsilon} + C \sup_{s\in [0,\tau)} (\tau-s)^{\mu}((\tau-s)^{-\mu+\varepsilon} - (t-s)^{-\mu+\varepsilon})\label{estm:stp-time-left-cnt-estm2}.
\end{align}
We claim that for all $s\in [0,\tau)$,
\begin{equation}\label{eq:holderclaim}
(\tau-s)^{\mu}((\tau-s)^{-\mu+\varepsilon} - (t-s)^{-\mu+\varepsilon})\leq (t-\tau)^{\varepsilon}.
\end{equation}
In order to show this, let $u = \tau-s$ and $v = t-s$. Then $v-u = t-\tau$ and \eqref{eq:holderclaim} is equivalent to \[u^{\varepsilon} - v^{\varepsilon}\Big(\frac{u}{v}\Big)^{\mu}\leq (v-u)^{\varepsilon},  \ \  0<u<v\leq T.\]
Writing $u = x v$ with $x\in (0,1)$ and dividing by $v^\varepsilon$, the latter is equivalent to
\[{x^{\varepsilon} - x^{\mu}}\leq {(1-x)^{\varepsilon}},  \ \  x\in (0,1).\]
For all $x\in [0,1]$ one has ${x^{\varepsilon} - x^{\mu}}\leq 1-x^{\mu-\varepsilon}$. Thus it suffices to show that  ${1-x^{a}} \leq (1-x)^{b}$ where $a = \alpha\in (0,1)$ and $b=\varepsilon\in (0,1)$. However, ${1-x^{a}}\leq 1-x \leq (1-x)^b$ for all $x\in [0,1]$ and this proves the required estimate.

We can conclude that the right-hand side of \eqref{estm:stp-time-left-cnt-estm2} is less or equal than $2C(t-\tau)^{\varepsilon}$. This completes the proof.

\end{proof}

\section{Measurability of the resolvent}

\begin{lemma}\label{lemma:(H1)-verification}
The drift operator $A$ from Section \ref{sec:boundeddomain} satisfies condition (H1).
\end{lemma}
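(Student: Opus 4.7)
The task is to verify that $(t,\omega)\mapsto R(\lambda,A(t,\omega))$ is strongly measurable and adapted with values in $\calL(L^p(S))$ endowed with the operator norm topology. The difficulty, which is not present in Example~\ref{example:ConstDomH1Verify}, is that the domain $D(A(t,\omega))$ depends on $(t,\omega)$ through the conormal boundary condition $C(t,s,\omega,D)u=0$. I would proceed by an approximation-by-step-functions argument.

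First, from the joint $\mathscr P_T\otimes\mathscr B(S)$-measurability and the regularity assumptions \eqref{ex:assumptionscoefficients}, I would show that the coefficient processes
\[ (t,\omega)\mapsto (a_{ij}(t,\cdot,\omega))_{ij}\in C^1(\overline S)^{n^2},\qquad (t,\omega)\mapsto a_0(t,\cdot,\omega)\in C(\overline S)\cap L^n(S)\]
are strongly measurable and adapted. Since these target spaces are separable (or embed isometrically into separable spaces), Pettis's measurability theorem reduces the verification to the adaptedness of countably many point evaluations, which is immediate from the standing pointwise measurability. Then I would approximate each of these processes by $(\F_t)$-adapted simple processes $(a_{ij}^{(n)},a_0^{(n)})$ converging a.s.\ in the corresponding topologies; for each $n$, the operator $A^{(n)}(t,\omega)$ takes only finitely many values on a measurable partition of $[0,T]\times\OO$, hence $(t,\omega)\mapsto R(\lambda,A^{(n)}(t,\omega))$ is a simple, adapted, norm-measurable $\calL(L^p(S))$-valued function.

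The essential step is then the stability statement
\[\|R(\lambda,A^{(n)}(t,\omega))-R(\lambda,A(t,\omega))\|_{\calL(L^p(S))}\to 0\quad\text{as }n\to\infty, \text{ a.s.\ and in }t.\]
I would use the second resolvent identity
\[R(\lambda,A)-R(\lambda,A^{(n)})=R(\lambda,A^{(n)})(A^{(n)}-A)R(\lambda,A),\]
together with the uniform $L^p\to W^{2,p}$ bound on both resolvents coming from \cite[Theorem 8.5.6]{KryLecturesEllParab}. On $W^{2,p}(S)$, the difference $A^{(n)}-A$ is a multiplication/differentiation operator whose $L^p\!\to\!L^p$-norm is controlled by $\|a_{ij}^{(n)}-a_{ij}\|_{C^1(\overline S)}+\|a_0^{(n)}-a_0\|_{L^n(S)}$, which tends to zero by construction. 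Once this convergence is in hand, $R(\lambda,A(\cdot,\cdot))$ is an operator-norm limit of simple adapted processes, hence strongly measurable and adapted in the uniform operator topology.

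\textbf{Expected obstacle.} The delicate point in the stability argument is that $R(\lambda,A)L^p\subset D(A)$ but in general \emph{not} $D(A^{(n)})$, so the composition $(A^{(n)}-A)R(\lambda,A)$ has to be understood without reference to $D(A^{(n)})$; this is legitimate if one interprets $A^{(n)}$ as the divergence-form differential expression acting on $W^{2,p}$-functions, and one only needs that this composition extends boundedly to $L^p$ (which is what the above estimate delivers). If this direct perturbative argument turns out to be cumbersome because of the variable boundary operator, a convenient fallback is to first establish pointwise measurability of $(t,\omega)\mapsto R(\lambda,A(t,\omega))f$ for every $f\in L^p(S)$ (via measurable selection for the unique elliptic BVP solving $\lambda u-A(t,\omega)u=f$) to obtain strong-operator measurability, and then upgrade to the uniform operator topology using the compact embedding $W^{2,p}(S)\hookrightarrow L^p(S)$ and the uniform bound on $R(\lambda,A(t,\omega))\colon L^p(S)\to W^{2,p}(S)$ to produce a norm-separable operator range.
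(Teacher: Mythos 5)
Your overall architecture coincides with the paper's: reduce everything to measurability of the coefficient process via Pettis's theorem, approximate the coefficients by adapted countably-valued functions, and conclude by a continuity estimate for the resolvent in terms of the coefficients. The gap is in that continuity estimate. The second resolvent identity
\[
R(\lambda,A)-R(\lambda,A^{(n)})=R(\lambda,A^{(n)})\bigl(A^{(n)}-A\bigr)R(\lambda,A)
\]
is \emph{not} valid here, and the obstacle you flag cannot be dismissed by simply reading $A^{(n)}$ as the differential expression $A^{(n)}(t,\cdot,D)$ on $W^{2,p}(S)$. Indeed, put $u=R(\lambda,A)f$ and
\[
v:=R(\lambda,A)f-R(\lambda,A^{(n)})f-R(\lambda,A^{(n)})\bigl(A^{(n)}(t,\cdot,D)-A(t,\cdot,D)\bigr)u .
\]
Then $(\lambda-A^{(n)}(t,\cdot,D))v=0$ in $S$, but $v\notin D(A^{(n)})$: its conormal trace equals $(C^{(n)}-C)(t,\cdot,D)u\neq 0$ on $\partial S$, since $u$ satisfies the boundary condition of $A$, not that of $A^{(n)}$. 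Hence $v\neq 0$ in general, and your identity misses exactly the correction produced by the time-dependent boundary operator --- which is the whole point of this example. To salvage your route you would have to solve the inhomogeneous conormal problem $(\lambda-A^{(n)}(t,\cdot,D))v=0$, $C^{(n)}v=(C^{(n)}-C)u$, and estimate $\|v\|_{L^p(S)}$ by the trace norm of the boundary datum; that is feasible but it is an additional elliptic estimate your argument does not contain, and it is not ``what the above estimate delivers.''

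The paper avoids this entirely by a weak (duality) form of the resolvent difference: pairing $(R(\lambda,A(t))-R(\lambda,A'(t)))f$ against $g$ through $v=R(\lambda,A(t)^{*})g$ and exploiting the divergence structure (Schnaubelt's formula (2.40)), one obtains an expression involving only the \emph{first} derivatives $D_jR(\lambda,A'(t))f$ and $D_iR(\lambda,A(t)^{*})g$; the boundary terms cancel because each resolvent satisfies its own conormal condition. This gives
$\|R(\lambda,A(t))-R(\lambda,A'(t))\|_{\calL(L^p(S))}\le C\max_{i,j,x}|a_{ij}(t,x)-a'_{ij}(t,x)|$
--- note that only the sup norm of the coefficient difference enters, no derivatives --- after which the approximation argument closes exactly as you describe. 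Your fallback (strong-operator measurability plus compactness of $W^{2,p}(S)\hookrightarrow L^p(S)$ to upgrade to the uniform operator topology) is too vague to carry the lemma: compactness of each individual resolvent does not give norm-separability of the range of $(t,\omega)\mapsto R(\lambda,A(t,\omega))$, and the natural way to obtain it is again the continuity-in-the-coefficients estimate you are trying to circumvent.
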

\begin{proof}\
We will prove adaptedness of the resolvent. Strong measurability can be done similarly, and will be omitted. Fix $t\in [0,T]$.

\textit{Step 1. Continuous dependence on the coefficients.}\\
Consider, besides the operator $A$, the operator $A'$ satisfying \eqref{eq:exampledrift} but with $a_{ij}'$ and $a_0'$ instead of $a_{ij}$ and $a_0$, respectively. We assume that $a_{ij}'$, $a_0'$ are functions satisfying \eqref{ex:assumptionscoefficients}. Consider the closed operators $A(t), A'(t): \OO\to\calL(L^p(S))$. Let $p'$ be the H\"{o}lder conjugate of $p$, let $f\in L^p(S)$ and $g\in L^{p'}(S)$. Set
\[u := (R(\lambda, A(t)) - R(\lambda, A'(t)))f \in W^{2,p}(S), \ \  v := R(\lambda, A(t)^*)g \in D(A(t)^*).\]
By applying \cite[(2.40)]{Schnaubelt} with $\nu = 0$ and $A'(t)$ instead of $A(s)$, we obtain
\begin{align*}
\lb (R&(\lambda, A(t)) - R(\lambda, A'(t)))f, g \rb = \int_S u (\lambda - A(t))v\;dx \\
&= \sum_{i,j=1}^n \int_S (a_{ij}'(t, x) - a_{ij}(t,x)) (D_jR(\lambda, A'(t))f)(x) (D_iR(\lambda, A(t)^*)g)(x)\;dx \\
&\qquad + \int_S (a_0'(t,x) - a_0(t,x)) (R(\lambda, A'(t))f)(x) (R(\lambda, A(t)^*)g)(x)\;dx.
\end{align*}
Still following the lines of \cite{Schnaubelt}, it follows that
\begin{align*}
|\lb (R&(\lambda, A(t)) - R(\lambda, A'(t)))f, g \rb| \\& \leq C \max_{i,j,\omega,x} \{|a_{ij}(t,x) - a_{ij}'(t,x)|, |a_0(t,x) - a_0'(t,x)| \} \|f\|_{L^p(S)} \|g\|_{L^{p'}(S)}.
\end{align*}
Hence
\begin{equation}\label{eq:RcontAAprime}
\|R(\lambda, A(t)) - R(\lambda, A'(t))\|_{\calL(L^p(S))} \leq C \max_{i,j,\omega,x} |a_{ij}(t,x) - a_{ij}'(t,x)|.
\end{equation}

\textit{Step 2. Approximation of the coefficients.}\\
Let us denote the space of all symmetric $n\times n$-matrices by $\RR^{n\times n}_{\textrm{sym}}$, endowed with the operator norm. Consider $a(t)$ as a map $a(t): \OO \to C^1(\overline{S}, \RR^{n\times n}_{\textrm{sym}})$. For $i,j = 1,\ldots, n$ and $s \in \overline{S}$, define $x_{i,j,s}^* \in C^1(\overline{S}, \RR^{n\times n}_{\textrm{sym}})^*$ by the point evaluation $\lb f, x_{i,j,s}^* \rb = f(s)_{ij}$. Let $\Gamma$ be the subset of $C^1(\overline{S}, \RR^{n\times n}_{\textrm{sym}})^*$ defined by
\[\Gamma := \{ x_{i,j,s}^* \in C^1(\overline{S}, \RR^{n\times n}_{\textrm{sym}})^*:\ i,j=1,\ldots, n,\ s\in \overline{S}\}.\]
Note that $\Gamma$ is a set separating the points of $C^1(\overline{S}, \RR^{n\times n}_{\textrm{sym}})$. Since for all $s\in \overline{S}$, $a_{ij}(t,s):\OO\to\RR$ is $\mathscr{F}_t$-measurable, by assumption, it follows from Pettis's theorem \cite[Proposition I.1.10]{VakTarCho} that $a(t)$ is $\mathscr{F}_t$-measurable. Hence, by \cite[Proposition I.1.9]{VakTarCho}, there exists a sequence of mappings $a^k(t): \OO \to C^1(\overline{S}, \RR^{n\times n}_{\textrm{sym}})$, such that $a^k(t)$ is countably valued and such that $a^k(t)^{-1}(f) \in \mathscr{F}_t$ for all $f\in C^1(\overline{S}, \RR^{n\times n}_{\textrm{sym}})$, with the property that $a^k(t) \to a(t)$ uniformly in $\OO$.
Let $\e >0 $ and choose $N\in \NN$ such that for all $k > N$ and all $\omega \in \OO$, $\sup_{s\in \overline{S}} \|a(t,s) - a^k(t,s)\|_{\RR^{n\times n}_s} < \e$. Since $a(t,s)$ is invertible, by the uniform ellipticity condition \eqref{eq:exampleUEC}, it follows that $a^k(t,s)$ is invertible whenever $k$ is large enough. In fact, by estimating the norm $\|a^k(t,s)^{-1}\|$, one obtains the following result: there exists a $\delta > 0$ and an $\tilde{N} \in \NN$ such that for all $k > N$, $a^k(t,s)$ satisfies \eqref{eq:exampleUEC} with a constant $\tilde{\kappa}$ such that $\tilde{\kappa} \in [\kappa, \kappa + \delta]$.

Consider the operator $A_k$ defined by \eqref{eq:exampledrift} but with $a^k_{ij}$ instead of $a_{ij}$. Note that $A_k$ satisfies \ref{asmp:AT1}. Since $a^k_{ij}$ is countably valued, $R(\lambda, A_k(t)): \OO \to \calL(L^p(S))$ is countably valued as well, and hence $\mathscr{F}_t$-measurable. By \eqref{eq:RcontAAprime}, we obtain $R(\lambda, A_k(t)) \to R(\lambda, A(t))$ as $k\to\infty$, uniformly in $\OO$, and therefore it follows that $R(\lambda, A(t))$ is $\mathscr{F}_t$-measurable.

To prove strong measurability, repeat step $1$ but with $A:\OO\times [0,T] \to \calL(L^p(S))$ instead of $A(t): \OO \to \calL(L^p(S))$. Similarly, in step $2$ one considers $a: \OO\times [0,T] \to C^1(\overline{S}, \RR^{n\times n}_{\textrm{sym}})$ and the $\sigma$-algebra $\mathscr{F} \otimes \mathscr{B}([0,T])$.

\end{proof}

\def\polhk#1{\setbox0=\hbox{#1}{\ooalign{\hidewidth
  \lower1.5ex\hbox{`}\hidewidth\crcr\unhbox0}}}

\end{document}